\documentclass[11pt]{article}
\usepackage{cite}
\usepackage{mathrsfs} 
\usepackage{ifthen}
\usepackage{latexsym}
\usepackage{amsmath,amssymb,amstext,amsthm} 
\usepackage{graphicx,color,epsfig}
\usepackage{graphicx}
\usepackage{fullpage}        
\usepackage{float}
\usepackage{verbatim}
\usepackage{enumerate}
\usepackage[pdftex,letterpaper=true,pagebackref=true]{hyperref}

\hypersetup{
    plainpages=false,       
    pdfpagelabels=true,     
    bookmarks=true,         
    unicode=false,          
    pdftoolbar=true,        
    pdfmenubar=true,        
    pdffitwindow=false,     
    pdfstartview={FitH},    
    pdftitle={
FR for SR SDP and DNN
},    
    pdfnewwindow=true,      
    colorlinks=true,        
    linkcolor=blue,         
    citecolor=green,        
    filecolor=magenta,      
    urlcolor=cyan           
}

\numberwithin{equation}{section}
\usepackage{makeidx}
\usepackage{lineno} 
\usepackage{algorithm}
\usepackage[noend]{algpseudocode}
\usepackage[noabbrev]{cleveref}
\usepackage{multirow}
\usepackage{rotating} 
\usepackage{enumitem}
\makeindex

\setlength{\evensidemargin}{0.17in}
\setlength{\oddsidemargin}{0.17in} 
\setlength{\textheight}{9.19in} \setlength{\textwidth}{6.35in}


\def\R{\mathbb{R}}
\def\Sc{\mathbb{S}}
\def\Ss{\mathbb{S}}
\def\Sn{\Sc^n}

\def\Snp{\Sc_+^n}

\def\Rd{\mathbb{R}^d}
\def\Rn{\mathbb{R}^n}

\def\Rq{\mathbb{R}^q}

\def\SSx{\mathcal{S}_x}

\def\eqref#1{{\normalfont(\ref{#1})}}

\def\PSD{\mbox{\boldmath$PSD$}\,}





\def\eqref#1{{\normalfont(\ref{#1})}}

\newtheorem{theorem}{Theorem}[section]

\newtheorem{definition}[theorem]{Definition}
\newtheorem{example}[theorem]{Example}
\newtheorem{assump}[theorem]{Assumption}
\newtheorem{prop}[theorem]{Proposition}

\newtheorem{lem}[theorem]{Lemma}

\newtheorem{corollary}[theorem]{Corollary}

\newtheorem{remark}[theorem]{Remark}

\newtheorem{alg}[theorem]{Algorithm}

\newtheorem{lemma}[theorem]{Lemma}

\crefname{thm}{Theorem}{Theorems}
\Crefname{thm}{Theorem}{Theorems}
\crefname{assump}{Assumption}{Theorems}
\Crefname{assump}{Assumption}{Theorems}
\crefname{question}{Question}{Theorems}
\Crefname{question}{Question}{Theorems}
\crefname{problem}{Problem}{Theorems}
\Crefname{problem}{Problem}{Theorems}
\crefname{conjecture}{Conjecture}{Theorems}
\Crefname{conjecture}{Conjecture}{Theorems}
\crefname{prop}{Proposition}{Propositions}
\Crefname{prop}{Proposition}{Propositions}
\crefname{cor}{Corollary}{Corollaries}
\Crefname{cor}{Corollary}{Corollaries}
\crefname{alg}{Algorithm}{Algorithms}
\Crefname{alg}{Algorithm}{Algorithms}
\crefname{lem}{Lemma}{Lemmas}
\Crefname{lem}{Lemma}{Lemmas}
\theoremstyle{definition}
\crefname{defn}{definition}{definitions}
\Crefname{defn}{Definition}{Definitions}
\crefname{remark}{Remark}{Remarks}
\Crefname{remark}{Remark}{Remarks}
\crefname{rmk}{Remark}{Remarks}
\Crefname{rmk}{Remark}{Remarks}
\crefname{example}{Example}{Examples}
\Crefname{example}{Example}{Examples}
\crefname{align}{}{}
\Crefname{align}{}{}
\crefname{equation}{}{}
\Crefname{equation}{}{}

	\newcommand{\textdef}[1]{\textit{#1}\index{#1}}

	\newcommand{\LL}{{\mathcal L} }
        \newcommand{\RR}{{\mathcal R} }
	
	\newcommand{\FF}{{\mathcal F} }

	\newcommand{\BB}{{\bf \mathcal B} }
	\newcommand{\BBs}{{\BB^*}}
	\newcommand{\BBt}{\tilde{\BB}}
	\newcommand{\BBts}{\BBt^*}
	
	\newcommand{\GG}{{\mathcal G} }
	\newcommand{\RRG}{{\RR_{\GG}}}
	
	\newcommand{\PP}{{\mathcal P} }
	\newcommand{\Nn}{{\NN}^n}
	\newcommand{\TT}{{\mathcal T} }
	
	\newcommand{\VV}{{\mathcal V} }

	\newcommand{\LB}{\textbf{LB}\,}
	\newcommand{\LBp}{\textbf{LB}}
	\newcommand{\OBJ}{\textbf{OBJ}\,}
	\newcommand{\OBJp}{\textbf{OBJ}}
	\newcommand{\ADMM}{\textbf{ADMM}\,}
	\newcommand{\ADMMp}{\textbf{ADMM}}
	
	\newcommand{\DNN}{\textbf{DNN}\,}
	\newcommand{\DNNp}{\textbf{DNN}}
	\newcommand{\GP}{\textbf{GP}\,}
	\newcommand{\GPp}{\textbf{GP}}
	\newcommand{\QAP}{\textbf{QAP}\,}
	\newcommand{\QAPp}{\textbf{QAP}}
	
	\newcommand{\IPMp}{\textbf{IPM}}
	\newcommand{\SDP}{\textbf{SDP}\,}
	\newcommand{\SDPp}{\textbf{SDP}}
	\newcommand{\SDPs}{\textbf{SDPs}\,}

	\newcommand{\SR}{\textbf{SR}\,}
	\newcommand{\SRp}{\textbf{SR}}
	\newcommand{\FR}{\textbf{FR}\,}
	\newcommand{\FRp}{\textbf{FR}}

	\newcommand{\Rm}{{\R^m\,}}

	\newcommand{\NN}{{\mathbb N}}

	\newcommand{\A}{{\mathcal A}}
	\newcommand{\At}{{\tilde \A}}
	\newcommand{\II}{{\mathcal I}}

	\newcommand{\F}{{\mathcal F\,}}

	\newcommand{\QQ}{{\mathcal Q\,}}

	\newcommand{\bbm}{\begin{bmatrix}}
	\newcommand{\ebm}{\end{bmatrix}}
	\newcommand{\bem}{\begin{pmatrix}}
	\newcommand{\eem}{\end{pmatrix}}
	\newcommand{\beq}{\begin{equation}}
	\newcommand{\beqs}{\begin{equation*}}
	\newcommand{\bet}{\begin{table}}
	\newcommand{\eeq}{\end{equation}}
	\newcommand{\eeqs}{\end{equation*}}
	\newcommand{\beqr}{\begin{eqnarray}}

	\DeclareMathOperator{\face}{face}
	\DeclareMathOperator{\sd}{sd}

	\DeclareMathOperator{\Null}{null}
	\DeclareMathOperator{\nul}{null}
	\DeclareMathOperator{\Range}{range}
	\DeclareMathOperator{\range}{range}
	
	\DeclareMathOperator{\dist}{dist}

	\DeclareMathOperator{\Tr}{{\mathrm{T}}}
	\DeclareMathOperator{\trace}{{trace}}
	\DeclareMathOperator{\aff}{{aff}}

	\DeclareMathOperator{\blkdiag}{{blkdiag}}
	\DeclareMathOperator{\Blkdiag}{{Blkdiag}}
	\DeclareMathOperator{\tr}{{trace}}
	\DeclareMathOperator{\diag}{{diag}}
	\DeclareMathOperator{\Diag}{{Diag}}

	\DeclareMathOperator{\offDiag}{{offDiag}}

	\DeclareMathOperator{\gsvec}{{gsvec}}
	\DeclareMathOperator{\svec}{{svec}}

	\DeclareMathOperator{\relint}{{relint}}

	\DeclareMathOperator{\rank}{{rank}}
	\DeclareMathOperator{\spanl}{{span}}
	\DeclareMathOperator{\aut}{{aut}}
	
	\DeclareMathOperator{\conv}{{conv}}


	\newcommand{\nc}{\newcommand}
	\nc{\arrow}{{\rm arrow\,}}
	\nc{\Arrow}{{\rm Arrow\,}}
	\nc{\BoDiag}{{\rm B^0Diag\,}}
	\nc{\bodiag}{{\rm b^0diag\,}}

	\nc{\Mm}{{\mathcal M}^{m} }
	\nc{\Mmn}{{\mathcal M}^{mn} }
	\nc{\Mnr}{{\mathcal M}_{nr} }
	\nc{\Mnmr}{{\mathcal M}_{(n-1)r} }
	\nc{\kwqqp}{Q{$^2$}P\,}
	\nc{\kwqqps}{Q{$^2$}Ps}

	\nc{\notinaho}{(X,S)\in \overline{AHO}(\A)}
	\nc{\inaho}{(X,S)\in AHO(\A)}
	
	\newcommand{\bea}{\begin{eqnarray}}%
	\newcommand{\eea}{\end{eqnarray}}%
	\newcommand{\beas}{\begin{eqnarray*}}%
	\newcommand{\eeas}{\end{eqnarray*}}%
	%
	%
	
	%
	%
	%
	%
	%
	%
	%
	%
	%
	%
	\renewcommand{\F}{\mathcal{F}}%
	%
	%
	%
	%
	%
	%
	%
	%
	%
	{}
	
	
	
	\newcommand{\Hnp}[1][]{\,\mathbb{H}_+^{\ifthenelse{\equal{#1}{}}{n}{#1}}}
	\newcommand{\Hn}[1][]{\,\mathbb{H}^{\ifthenelse{\equal{#1}{}}{n}{#1}}}
	\newcommand{\Dn}[1][]{\,\mathbb{D}^{\ifthenelse{\equal{#1}{}}{n}{#1}}}
	


	\begin{document}

	\bibliographystyle{plain}
	\title{
	Facial Reduction for Symmetry Reduced Semidefinite
\\and Doubly Nonnegative Programs
	}
	             \author{
	\href{https://huhao.org/}{Hao Hu}\thanks{
Department of Combinatorics and Optimization
	Faculty of Mathematics, University of Waterloo, Waterloo,
	Ontario, Canada N2L 3G1;
	\url{h92hu@uwaterloo.ca}.}
	\and
	\href{https://www.tilburguniversity.edu/staff/r-sotirov}{Renata Sotirov}\thanks{
\href{https://www.tilburguniversity.edu/about/schools/economics-and-management/organization/departments/eor}{Department
of Econometrics and Operations Research},
Tilburg University, The Netherlands; \url{r.sotirov@uvt.nl}}
	\and
	\href{http://www.math.uwaterloo.ca/~hwolkowi/}
	{Henry Wolkowicz}%
	\thanks{Department of Combinatorics and Optimization
	Faculty of Mathematics, University of Waterloo, Waterloo,
	Ontario, Canada N2L 3G1; Research supported by The Natural
	Sciences and Engineering Research Council of Canada;
	\url{www.math.uwaterloo.ca/\~hwolkowi}.
	}
	}
	
	          \maketitle
	
	
	
	\vspace{-.4in}
	
\begin{abstract}
We consider both facial reduction, \FRp, and symmetry reduction, \SRp, techniques for semidefinite programming, \SDPp.
We show that the two together fit surprisingly well in an alternating direction method of multipliers, \ADMMp, approach.
In fact, this approach allows for simply adding on nonnegativity constraints, and solving the doubly nonnegative, \DNN, relaxation of many classes of
hard combinatorial problems. We also show that the singularity degree remains the same
after \SRp, and that the \DNN relaxations considered here have singularity degree one, that is reduced to zero after \FRp.
The combination of \FR and \SR leads to a significant improvement in both numerical stability and running time for both the \ADMM and interior point approaches.

We test our method on various  \DNN relaxations of hard combinatorial problems including quadratic assignment problems with
sizes of more than $n=500$. This translates to a semidefinite constraint of order $250,000$ and $625\times 10^8$ nonnegative constrained variables,   before applying the reduction techniques.
\end{abstract}

	{\bf Keywords:}
	Semidefinite programming, group symmetry,
	facial reduction, quadratic assignment problem, vertex separator
problem.
	
	{\bf AMS subject classifications:}
	 90C22, 90C25

	\tableofcontents
	\listoftables

	\vspace{-.2in}

	\section{Introduction}
	\label{sec:intro}
We consider two reduction techniques, facial and symmetry reduction, for
semidefinite programming, \SDPp. We see that the exposing vector approach
for \textdef{facial reduction, \FRp}, moves naturally onto the
\textdef{symmetry reduction, \SRp}.
We show that the combination of the two reductions fits surprisingly well
in an alternating direction method of multipliers, \ADMMp, approach.
In fact,  the combination of \FR and \SR makes possible solving the doubly nonnegative, \DNN, relaxations of many classes of hard combinatorial problems by using \ADMMp.
The combination of facial and symmetry reduction also leads
to a significant improvement in both numerical stability and
running time for both the \ADMM and interior point approaches.
We test our method on various \DNN relaxations of
hard combinatorial problems including quadratic assignment problems (\QAPp) with
sizes of more than $n=500$, see Table \ref{table:mittlem}.
Note that the order of the symmetric matrix variable in the
\SDP relaxation of the \QAP with  $n=500$,  before applying the reduction techniques, is $250,000$. This yields
approximately $625\times 10^8$ nonnegatively constrained variables in the semidefinite constrained matrix of the original, not reduced, problem formulation.
\index{\FRp, facial reduction}
\index{\SRp, symmetry reduction}
	
	Semidefinite programming can be viewed as an extension of linear
programming where the nonnegative orthant is replaced by the cone of
positive semidefinite matrices.
Although there are many algorithms for solving semidefinite programs,
they currently do not scale well and often do not provide high accuracy
solutions.
An early method for exploiting sparsity and reducing problem
size was based on recognizing a
chordal pattern in the matrices forming the \SDPp, see
e.g.,~\cite{FuFuKoNa:97a,kungurtsev2018two},
and the survey~\cite{VandenbergeAndersen:15}.  Another technique is that of
symmetry reduction, a methodology, pioneered by
Schrijver~\cite{schrijver1979comparison}, that exploits symmetries in
the data matrices that allows for the problem size to be reduced, often
significantly. More details and surveys for \SR are available in
\cite{MR2894697,MR2894695}
	
Without loss of generality, we consider the case where the primal
problem has a finite optimal value. Then for linear programming,
strong duality holds for both the primal and the dual problems.
But, this is \emph{not} the case
for \SDPp, where the primal and/or the dual can be unattained, and
one can even have a positive \emph{duality gap} between the
primal and dual optimal values. The usual constraint qualification to
guarantee strong duality is the Slater condition, strict feasibility.
Failure of the Slater condition may lead to theoretical and numerical
problems when solving the \SDPp. Facial reduction, \FRp, introduced by
Borwein and Wolkowicz~\cite{bw1,bw2,bw3}, addresses this issue by
projecting the \textdef{minimal face} of the \SDP into a lower dimensional
space. The literature for the theory and applications for \FR is large.
For a recent survey and theses
see~\cite{DrusWolk:16,permenter2017reduction,Sremac:2019}.

An earlier work~\cite{Lofberg} combines  partial  \FR and  \SR  for solving sum of square ({\bf SOS}) programs.
In particular, L\"{o}fberg~\cite{Lofberg}  applies a partial \FR via monomial basis selection
and shows  how to perform a partial \SR via identification of sign-symmetries to obtain  block-diagonal {\bf SOS} programs.
Examples in~\cite{Lofberg} verify the efficiency of the combined approach for {\bf SOS} programs.
For the connection  between \FR and monomial basis selection see~\cite{DrusWolk:16,Waki_mur_sparse}.

\medskip

In our paper, we assume that we know how to do \FR and \SR separately for the input \SDP instance. Under this
assumption, we show that it is possible to implement \FR to the symmetry
reduced \SDPp. The obtained reduced \SDP is both facially reduced and
symmetry reduced. And, it can be solved in a numerically stable manner by interior point methods.
Moreover, the nonnegativity constraints can be added to the
original \SDPp, and the resulting \DNN relaxation can be solved efficiently, as the
nonnegativities follow through, and are in fact simplified,
to the reduced \SDP program.
Thus, in fact we solve the facially and  symmetry reduced \DNN relaxation using an alternating direction method of
multipliers approach, \ADMMp. As a consequence, we are able to solve some  huge \DNN relaxations
for highly symmetric instances of certain hard combinatorial problems,
and we do so in a reasonable amount of time.

We include theoretical results on facial reduction,
as well as on the singularity degree of both \SDP and \DNN relaxations.
We present a view of \FR for \DNN from the ground set of the original
hard combinatorial problem. The singularity degree indicates the importance of \FR for splitting
type methods. In particular we show that the singularity
degree remains the same after  \SRp, and that our applications all have singularity degree one, that get reduced to zero after \FRp.
\index{doubly nonnegative, \DNNp}
\index{\DNNp, doubly nonnegative}

\subsection{Outline}
In~\Cref{sect:backgr} we provide
the background on using substitutions to first obtain
\FR and then symmetry and block diagonal \SRp.
In~\Cref{sect:FRforsymm} we
show how to apply \FR to the symmetry reduced \SDPp, and we
also provide conditions such that the obtained \SDP is strictly feasible.
In fact, we show that the nonnegativity constraints are essentially
unchanged and that we have strict feasibility
for the reduced \DNN relaxation. The results that singularity degree
does not change after \SR are included as well, see~\Cref{sect:newDNN}. In~\Cref{sec_admm} we show that the
reduced  \DNN relaxation can be solved efficiently using an \ADMM approach.
In~\Cref{sec_numers} we apply our result to two classes of problems:
the quadratic assignment and graph partitioning problems.
Concluding comments are in~\Cref{sect:concl}.

\section{Background}
\label{sect:backgr}
	\subsection{Semidefinite programming}
	The \textdef{semidefinite program, \SDPp}, in standard form is
	\index{\SDPp, semidefinite program}
	\index{primal optimal value, $p^*_{\SDP}$}
	\index{$p^*_{\SDP}$, primal optimal value}
\begin{equation}
	\label{sdp_standard}
p^*_{\SDP} :=	\min \{ \langle C,X \rangle \;|\; \textdef{$\A(X)$}=b, \;\;  X \succeq 0\},
	\end{equation}
	where the linear transformation $\A:\Sn \to \Rm$ maps real $n\times n$
	symmetric matrices to $\Rm$, and
 \textdef{$X\succeq 0$} denotes positive semidefiniteness, i.e., $X\in \Snp$.
 The set $\Snp$ is  the \emph{positive semidefinite, $\PSD$, cone}.
In the case of a  \textdef{doubly nonnegative, \DNNp}, relaxation,
nonnegativity constraints, $X\geq 0$, are added to~\cref{sdp_standard},
i.e.,~we use the \emph{\DNN cone} denoted $\DNNp\cong \DNN^n = \Snp\cap \R^{n\times n}_+$.
Without loss of generality, we assume that \underline{$\A$ is onto}.
	We let
\index{\DNN cone, $\DNN^n\cong \DNN$}
\index{$\DNN^n\cong \DNN$, \DNN cone}
 \index{\DNNp, doubly nonnegative}
	\index{$\Sn$, symmetric matrices}
	\index{symmetric matrices, $\Sn$}
	\index{$\Snp$, positive semidefinite cone}
	\index{positive semidefinite cone, $\Snp$}
	\index{$\succeq 0$, positive semidefinite}
	\index{positive semidefinite, $\succeq 0$}
	\index{$\PP_F$, feasible problem}
	\index{feasible problem$, \PP_F$}
	\index{$\FF_X$, feasible set}
	\index{feasible set$, \FF_X$}
	\begin{equation}
\label{eq:feasprob}
(\PP_F) \qquad	\FF_X:=\{X \succeq 0 \,|\,  \textdef{$\A(X)$}=b \}
	\end{equation}
	denote the \emph{feasibility problem}
     for this formulation with data $\A,b, \Snp$ from~\cref{sdp_standard}.
	Note that the linear equality constraint is equivalent to
	\[
	\A(X)= \left(\langle A_i,X \rangle\right)= (b_i)\in \Rm,
	\]
	for some $A_i\in \Sn, i=1,\ldots,m$.
The \textdef{adjoint} transformation ${\A^*:\Rm\to\Sn}$ is
$\textdef{$\A^*(y)$}=\sum_{i=1}^m y_iA_i$.
	
	\subsubsection{Strict feasibility and facial reduction}
The standard constraint qualification to guarantee strong
duality\footnote{Strong duality for the primal means a zero duality gap,
$p^*_{\SDP}=d^*_{\SDP}$, and dual attainment.}  for
the primal \SDP is the
	\textdef{Slater constraint qualification} (strict feasibility)
	\[
	\exists \hat X :
	         \, \,  \, \A(\hat X)=b, \, \, \hat X \succ 0,
	\]
where \textdef{$\hat X\succ 0$} denotes positive definiteness, i.e., $\hat X \in \Ss^n_{++}$.
For many problems where strict feasibility fails,
 one can exploit structure and facially reduce the
	problem to obtain strict feasibility,
see e.g.,~\cite{bw1,bw2} for the theory and~\cite{bw3} for the \emph{facial
reduction algorithm}. A survey with various views of \FR is
given in~\cite{DrusWolk:16}.
Facial reduction means that there exists a full column rank matrix
$V\in \R^{n\times r}, r<n$, and the corresponding adjoint of the
linear transformation $\VV:\Sn \to \Ss^r$ given in
\index{facial reduction}
\index{substitution!facial reduction}
\[
\textdef{$\VV^*(R)  = VRV^T$},\, R \in \Ss^r,
\]
such that the \emph{substitution} $X=\VV^*(R)$ results in the
\emph{equivalent, regularized, smaller dimensional}, problem
	\begin{equation}
\label{sdp_facialone}
	p^*_{\SDP}=
\min \{ \langle V^{T}CV,R \rangle \;|\; \langle V^{T}A_{i}V,R \rangle  =
b_{i},  \;\; i \in \II \subseteq \{1,\ldots,m\},\;\;  R
\in \Ss^r_+\}.
\footnote{\FR generally results in the
constraints becoming linearly dependent. Therefore, a linearly
independent subset need only be used~\cite{Sremac:2019}.}
\end{equation}
Strict feasibility holds for~\eqref{sdp_facialone}.
The cone $V\Ss^r_+V^T$ is the \textdef{minimal face of the \SDPp},
i.e.,~the smallest face of $\Snp$ that contains the feasible set, $\FF_X$.
And
\[
\range(V) = \range(X), \, \forall X \in \relint(\FF_X).
\]
If $U\in \R^{n\times n-r}$ with  $\Range(U)=\Null(V^T)$, then $W:=UU^T$ is an
\textdef{exposing vector} for the minimal face, i.e.,
\[
X \text{  feasible  } \implies WX = 0.
\]
Let \textdef{$\FF_R$} denote the feasible set for~\eqref{sdp_facialone}.
We emphasize the following constant rank result
for the \FR substitution:
\[
R\in \FF_R,  \, \rank(R)=r \iff  X=\VV^*(R) \in \FF_X, \, \rank(X)=r.
\]
\begin{remark}
\label{rem:FRcompare}
For a typical \FR algorithm for finding the minimal face,
at each iteration the dimension is strictly reduced, and at least
one redundant linear constraint can be discarded, i.e.,~we need at
most $\min\{m,n-1\}$ iterations,
e.g.,~\cite{DrusWolk:16,SWW:17},\cite[Theorem 3.5.4]{Sremac:2019}.
\label{pg:discardconstr}

Note that \FR can also be considered in the original space using
rotations. Each step of \FR involves finding an exposing vector $W=UU^T$ to the
minimal face. Without loss of generality, we can assume that the matrix
$Q=\begin{bmatrix} V & U\end{bmatrix}$ is orthogonal.
Then the \FR that reduces the size
of the problem  $X=\VV^*(R) = VRV^T$ can equivalently be considered as a
rotation (orthogonal congruence):
\[
X =
\begin{bmatrix}
V& U
\end{bmatrix}
\begin{bmatrix}
R & 0 \cr 0 & 0
\end{bmatrix}
\begin{bmatrix}
V& U
\end{bmatrix}^T, \quad
\begin{bmatrix}
R & 0 \cr 0 & 0
\end{bmatrix}
=
\begin{bmatrix}
V& U
\end{bmatrix}^T
X
\begin{bmatrix}
V& U
\end{bmatrix},
\]
i.e.,~after this rotation, we can discard zero blocks and reduce the size
of the problem. We note that this can then be compared to the
 \emph{Constrained Set Invariance Conditions} approach
in~\cite{permenter2017reduction}, where a special projection is used to obtain the
reduced problem. In addition, the approach in~\cite{permenter2017reduction}
performs the projections on the primal-dual problem thus maintaining the
original optimal values of both. In contrast, we emphasize the importance of the
primal problem as being the problem of interest. After \FR we have a
regularized \emph{primal} problem~\eqref{sdp_facialone}
with optimal value the same as
that of the original \emph{primal} problem. In addition, the reduced
program has the important property that the dual of the dual is the primal.
\end{remark}

\subsection{Group invariance and symmetry reduction, \SRp}
\label{sec_group}
We now find a \emph{substitution} using the adjoint linear transformation
 $\BBts(x)$ from~\cref{eq:Btildestarx} below, that provides the \SR in block diagonal form.
We first look at the procedure for simplifying an \SDP that is invariant
under the action of a symmetry group. This approach was introduced by Schrijver~\cite{schrijver1979comparison}; see also the survey
\cite{MR2894697}.
The appropriate algebra isomorphism follows from the Artin-Wedderburn theory~\cite{MR1575142}.
 A more general framework is given in the thesis
\cite{permenter2017reduction}.	More details can be found in
e.g.,~\cite{MR2460523,Klerk:10,gatermann2004symmetry,gijswijt2010matrix}.

	\index{$\GG$,  group of permutation matrices}
	\index{group of permutation matrices, $\GG$}
	
	Let $\GG$ be a nontrivial group of permutation matrices of size $n$.
The \textdef{commutant, $A_{\GG}$}, 	(or \textdef{centralizer
ring}) of $\GG$ is defined as the subspace
	\index{$A_{\GG}$, commutant}
	\begin{equation}
\label{eq:commutant}
	A_{\GG} := \{ X \in \R^{n\times n} \;|\; PX=XP, \; \forall P \in \GG\}.
	\end{equation}
	Thus,  $A_{\GG}$ is the set of matrices that are
	self-permutation-congruent for all $P\in \GG$.
	An equivalent definition of the commutant is
	\[
	A_{\GG} := \{X \in \R^{n\times n} \; | \; \RRG(X) = X\},
	\]
	where
	\index{Reynolds operator, $\RRG(X)$}
	\index{$\RRG(X)$, Reynolds operator}
\begin{equation}\label{def:rey}
		\RRG(X) :=\frac{1}{|{\GG}|} \sum_{P\in {\GG}} PXP^T, \,\, X\in
	\R^{n\times n},
\end{equation}
	is called the \emph{Reynolds operator} (or \textdef{group average})
	of ${\GG}$.  The operator $\RRG$ is the orthogonal projection onto
	the commutant.
	The commutant  $A_{\GG}$ is a \textdef{matrix $*$-algebra}, i.e.,~it is a set of matrices that is closed under addition, scalar multiplication,
	matrix multiplication, and taking  transposes. One may obtain a basis for $A_{\GG}$ from the  orbits of the
	action of $\GG$ on ordered pairs of vertices,
	where the   \textdef{orbit} of $(u_{i},u_{j}) \in \{0,1\}^{n} \times
	\{0,1\}^{n}$ under  the action of  $\GG$ is the set $\{ (Pu_{i},Pu_{j})
	\;|\; P \in \GG \}$,  and $u_i\in \R^n$ is the $i$-th unit vector. In what follows, we denote
\index{basis for $A_{\GG}$, $\{ B_1, \ldots, B_d \}$}
\index{$\{ B_1, \ldots, B_d \}$, basis for $A_{\GG}$}
\index{$d$, dimension of basis for $A_{\GG}$}
\index{dimension of basis for $A_{\GG}$, $d$}
\begin{equation}
\label{eq:basisB}
		\text{basis for  } A_{\GG}:
        \{ B_1, \ldots, B_d \},\, B_i \in \{0,1\}^{n\times n},\, \forall i.
\end{equation}
Let $J \cong J_n$ (resp.~$I \cong I_n$)  denote the matrix of all ones (resp.~the identity matrix)  of appropriate size.
The basis \eqref{eq:basisB} forms a so-called
\emph{coherent configuration}.
\index{$J$, matrix of all ones}
\index{matrix of all ones, $J$}
 \begin{definition}[{\textdef{coherent configuration}}]
\label{coherentconfiguration}
  A set of zero-one $n\times n$ matrices $ \{B_1,\ldots, B_d\}$ is called a
coherent configuration of rank $d$ if it satisfies the following properties:
\begin{enumerate}
\item  $\sum_{i \in \mathcal{I} } B_i = I$  for some
$\mathcal{I} \subset \{1,\ldots,d\}$, and $\sum_{i=1}^d B_i = J$;
\item $B_i^\mathrm{T} \in \{B_1,\ldots, B_d\}$ for $i=1,\ldots,d$;
\item
$B_iB_j \in \spanl  \{B_1,\ldots, B_d\}, \, \forall i,j\in\{1,\ldots,d\}$.
\end{enumerate}
\end{definition}

In what follows we obtain that
the Reynolds operator maps the feasible set $\FF_X$ of
\eqref{sdp_standard} \underline{into} itself and keeps the objective
value the same, i.e.,
\[ 
X \in \FF_X \implies
\RRG(X) \in \FF_X \text{ and }
\langle C, \RRG(X) \rangle = \langle C, X \rangle.
\] 
One can restrict optimization of an \SDP problem  to feasible points in a matrix $*$-algebra that
contains the data matrices of that problem, see e.g.,~\cite{MR2067190,MR2831404}.
In particular, the following result is known.
\begin{theorem}[\cite{MR2831404}, Theorem 4]
\label{thmAlg}
Let $A_{\GG}$ denote a matrix $*$-algebra that contains the data
matrices of an \SDP problem as well as the
identity matrix. If the \SDP problem has an optimal solution, then it has an optimal solution in $A_{\GG} $.
\end{theorem}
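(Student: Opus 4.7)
The plan is to produce, from any optimal $X^\ast$, an optimal solution lying in $A_{\GG}$ by averaging over the group with the Reynolds operator $\RRG$. This is natural because $\RRG$ was already identified in the excerpt as the orthogonal projection onto the commutant, so the intuition is that any feasible point is ``symmetrized'' to one in the algebra without leaving the feasible set or changing the objective.

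First I would take an optimal solution $X^\ast$ and set
\[
\bar X \;:=\; \RRG(X^\ast) \;=\; \frac{1}{|\GG|}\sum_{P\in \GG} P X^\ast P^T .
\]
Then I would check three properties in turn. (i) \emph{Positive semidefiniteness}: each $P\in \GG$ is a permutation matrix, hence orthogonal, so $PX^\ast P^T\succeq 0$; a convex combination of PSD matrices is PSD, giving $\bar X\succeq 0$. (ii) \emph{Feasibility}: since every $A_i\in A_{\GG}$ commutes with every $P\in \GG$, we have $P^T A_i P = A_i$, and therefore
\[
\langle A_i,\bar X\rangle \;=\; \frac{1}{|\GG|}\sum_{P\in \GG}\langle A_i, P X^\ast P^T\rangle \;=\; \frac{1}{|\GG|}\sum_{P\in \GG}\langle P^T A_i P, X^\ast\rangle \;=\; \langle A_i,X^\ast\rangle \;=\; b_i.
\]
(iii) \emph{Same objective value}: the identical calculation with $C\in A_{\GG}$ in place of $A_i$ yields $\langle C,\bar X\rangle=\langle C,X^\ast\rangle = p^\ast_{\SDP}$.

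Next I would verify $\bar X\in A_{\GG}$. For any $Q\in \GG$, the reindexing $P\mapsto QP$ is a bijection of $\GG$, so
\[
Q\bar X Q^T \;=\; \frac{1}{|\GG|}\sum_{P\in \GG}(QP)X^\ast(QP)^T \;=\; \frac{1}{|\GG|}\sum_{P'\in \GG} P' X^\ast (P')^T \;=\; \bar X,
\]
which gives $Q\bar X = \bar X Q$ for every $Q\in \GG$, i.e., $\bar X\in A_{\GG}$. Combined with (i)--(iii), $\bar X$ is an optimal solution lying in $A_{\GG}$.

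The main obstacle is conceptual rather than computational: the theorem is phrased for an arbitrary matrix $*$-algebra containing the data, whereas the Reynolds argument above literally uses that $A_{\GG}$ is the commutant of a permutation group $\GG$. For the general statement one would have to replace $\RRG$ by an abstract conditional expectation onto $A_{\GG}$ that is unital, trace-preserving, and completely positive; its existence follows from the Artin--Wedderburn decomposition of the $*$-algebra into blocks of full matrix algebras, after which the same three properties (PSD, feasibility, unchanged objective) carry over verbatim. In the setting this paper actually employs, where $A_{\GG}$ arises as the commutant of a concrete permutation group, the Reynolds operator proof above is enough.
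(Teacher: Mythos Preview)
Your proposal is correct and matches the approach the paper itself sketches: the paper does not give a self-contained proof of this theorem (it is cited from \cite{MR2831404}), but immediately before the statement it records exactly the key fact you use, namely that the Reynolds operator maps $\FF_X$ into itself while preserving the objective value. Your observation that the Reynolds argument literally handles only the commutant-of-a-permutation-group case, and that the general $*$-algebra statement requires a conditional expectation via Artin--Wedderburn, is accurate and is precisely the level of generality treated in the cited reference; for the applications in this paper, where $A_{\GG}$ is always the commutant of an explicit permutation group, your Reynolds proof suffices.
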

\begin{remark}
\label{rem:complxDNN}
In~\cite{MR2831404}, the authors consider complex matrix $*$-algebras. However in most applications, including applications in this paper,
the data matrices are real symmetric matrices and  $A_{\GG} $ has a real
basis, see~\Cref{coherentconfiguration}.
Thus, we consider here the special real case.
The authors in~\cite{MR2831404} also prove that if $A_{\GG}$ has a real
basis, and  the \SDP has an optimal solution, then it has a real optimal solution in $A_{\GG}$.
Real matrix  $*$-algebras are also considered in~\cite{MR2685140,{de2012improved},{MR2546331}}.

In addition, \Cref{thmAlg} holds for \DNNp,
i.e.,~we can move any nonnegativity constraints
on $X$ added to the \SDP in \Cref{thmAlg}
to simple nonnegativity constraints on $x$ in~\cref{eq:FxAg},
see e.g.,~\cite[Pg 5]{Klerk:10}.
\end{remark}

Therefore, we may restrict the feasible set of the optimization problem to
its intersection with $A_{\GG}$. In particular, we can use the basis
matrices and assume that
\begin{equation}
\label{eq:FxAg}
X\in \FF_X \cap A_{\GG}  \Leftrightarrow \left[X =
\sum_{i=1}^{d}x_{i}B_{i} =: \textdef{$\BB^*(x)$} \in \FF_X, \text{  for some  }
	x\in \R ^d \right].
\end{equation}
From now on we assume that $\GG$ is such that $A_{\GG}$
contains the data matrices of~\cref{sdp_standard}.

	\begin{example}[Hamming Graphs]
\label{sec:Hamming}
We now present an example of an algebra that we use later in our numerics.

		The \textdef{Hamming graph}
 $H(d,q)$ is the Cartesian product of $d$ copies of the complete graph $K_q$,
		with vertices represented by $d$-tuples of letters from an alphabet of size $q$. The Hamming distance between vertices $u$ and $v$, denoted by $|(u,v)|$, is the number of positions in which $d$-tuples $u$ and $v$ differ.
		
	The matrices
		$$(B_{i})_{u,v} := \begin{cases}
		1 & \text{ if } |(u,v)| = i \\
		0 & \text{ otherwise}
		\end{cases}, \quad  i = 0,\ldots,d $$
		form a basis of the Bose-Mesner algebra of the Hamming
scheme, see~\cite{delsarte:73}.  In particular, $B_{0} = I$ is the identity matrix and $B_1$ is the adjacency matrix of the  Hamming graph $H(d,q)$ of size $q^d \times q^d$.
In  cases, like for the  Bose-Mesner algebra, when one of the basis
elements equals the identity matrix, it is common to set the index
of the corresponding basis element to zero.
The basis matrices $B_{i}$ can be simultaneously diagonalized by the real, orthogonal matrix $Q$ given by
		$$Q_{u,v} = 2^{-\frac{d}{2}}(-1)^{u^{T}v}.$$
		The distinct elements of the matrix $Q^{T}B_{i}Q$ equal $K_{i}(j)$ ($j=0,\ldots,d$) where
		$$K_{i}(j) := \sum_{h=0}^{i} (-1)^{h}(q-1)^{i-h}{j \choose h} {d-j \choose i-h}, \quad  j = 0,\ldots,d, $$
are \emph{Krawtchouk polynomials}. We denote by $\mu_j := {d \choose j}(q-1)^{j}$ the multiplicity of the $j$-th eigenvalue $K_{i}(j)$.
The elements of the character table $P \in \R^{(d+1) \times (d+1)}$ of the Hamming
scheme $H(d,q)$, given in terms of the Krawtchouk polynomials, are
\[
p_{i,j} := K_{i}(j), \, i,j = 0,\ldots,d.
\]
In the later sections, we use the following well-known orthogonality relations on the Krawtchouk polynomial,
see e.g.,~\cite{delsarte:73},
\begin{equation}\label{orth_kraw}
\sum_{j=0}^{d}K_{r}(j)K_{s}(j)  {d \choose  j}(q-1)^{j} = q^{d}
       {d \choose s}(q-1)^{s} \delta_{r,s}, \quad r,s =0,\ldots,d,
		\end{equation}
where $\delta_{r,s}$ is the Kronecker delta function.
\index{$\delta_{r,s}$, Kronecker delta}
\index{Kronecker delta, $\delta_{r,s}$}
	\end{example}	

\subsubsection{First symmetry reduction using $X=\BB^*(x)$}
\label{sect:firstsymmX}
We now obtain our first reduced program using the substitution
$X=\BB^*(x)$. Note that the program is reduced in the sense that the
feasible set can be smaller though the optimal value remains the same.
\index{substitution!symmetry reduction}
	\begin{equation}
	\label{sdp_afterB}
	p^*_{\SDP}=
	\min \{ \langle \BB(C),x \rangle \;|\;
(\A\circ \BBs)(x)=b, \;\;  \BBs(x) \succeq 0\}, \quad
(\text{substitution  } X=\BB^*(x)).
	\end{equation}
	Here, $\BB$ is the adjoint of $\BB^*$.
In the case of a \DNN relaxation,
the structure of the basis in~\cref{eq:basisB}
allows us to \label{pg:splitting}
equate $X=\BB^*(x) \geq 0$ with the simpler $x\geq 0$.
This changes the standard doubly nonnegative cone into a splitting, the Cartesian product of
the cones $x\geq 0, \BBs(x)\succeq 0$, see~\Cref{rem:DNNPovhRendl,rem:complxDNN}.

A matrix $*$-algebra $\mathcal{M}$ is called \textdef{basic} if $\mathcal{M} = \{
\oplus_{i=1}^{t} M \;|\; M \in \mathbb{C}^{m \times m} \}$, where
$\oplus$ denotes the direct sum of matrices.
A very important decomposition result for matrix $*$-algebras is the following result due to Wedderburn.
\index{basic matrix $*$-algebra}
\begin{theorem}[\cite{MR1575142}]\label{wedd}
Let $\mathcal{M}$ be a matrix $*$-algebra  containing  the identity matrix.
Then there exists a unitary matrix $Q$ such that $Q^{*}\mathcal{M}Q$ is a direct sum of basic  matrix $*$-algebras.
\end{theorem}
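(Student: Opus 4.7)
The plan is to prove the Artin-Wedderburn decomposition by working through semisimplicity, central idempotents, and the spectral structure of the involution. First I would observe that, because $\mathcal{M}$ is closed under the conjugate transpose $*$, the standard inner product $\langle X,Y\rangle = \trace(X^*Y)$ on $\mathbb{C}^{n\times n}$ restricts to make $\mathcal{M}$ a Hilbert algebra. Consequently, for any left ideal $\mathcal{L} \subseteq \mathcal{M}$, its orthogonal complement $\mathcal{L}^\perp \cap \mathcal{M}$ is also a left ideal: if $X \in \mathcal{L}$ and $Y \in \mathcal{L}^\perp$, then $\langle X, MY\rangle = \langle M^* X, Y\rangle = 0$ for every $M \in \mathcal{M}$, since $M^*X \in \mathcal{L}$. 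Thus $\mathcal{M}$ is semisimple, and by induction it decomposes as an internal orthogonal direct sum of simple two-sided ideals $\mathcal{M} = \mathcal{M}_1 \oplus \cdots \oplus \mathcal{M}_s$, with associated central idempotents $E_1,\ldots,E_s$ that are self-adjoint (since each simple component is $*$-closed, the unique identity element of that component coincides with its adjoint) and mutually orthogonal. Because the $E_k$ are commuting Hermitian projections summing to $I$, their ranges $V_k := \range(E_k)$ are pairwise orthogonal subspaces spanning $\mathbb{C}^n$.

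Next I would analyze each simple $*$-ideal $\mathcal{M}_k$ acting on $V_k$. Since $\mathbb{C}$ is algebraically closed, the classical Wedderburn theorem identifies $\mathcal{M}_k$ with $\mathbb{C}^{m_k \times m_k}$ for some $m_k$. The restriction of $\mathcal{M}_k$ to $V_k$ is then a direct sum of $t_k$ equivalent irreducible representations, where $t_k = \dim V_k / m_k$. The key step is to build an \emph{isometric} intertwiner: inside $V_k$, pick any minimal left ideal / irreducible subspace $W \subseteq V_k$ with orthonormal basis, and for each of the $t_k$ isotypic copies use the $*$-structure to construct unit-norm intertwining vectors. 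Concretely, fix a minimal $*$-invariant projection $p \in \mathcal{M}_k$ onto a line in $W$; then $pV_k$ has dimension $t_k$ and carries an orthonormal basis $\{u_1,\ldots,u_{t_k}\}$. For each $u_i$ the subspace $\mathcal{M}_k u_i$ is irreducible of dimension $m_k$, orthogonal to $\mathcal{M}_k u_j$ when $i\neq j$ (by $*$-invariance of $p$), and the map $X \mapsto Xu_i$ is an isometry from $W$ onto $\mathcal{M}_k u_i$ up to a fixed normalization. Assembling orthonormal bases of these $t_k$ pieces produces a unitary basis for $V_k$ in which every element of $\mathcal{M}_k$ takes the form $I_{t_k} \otimes M$ with $M \in \mathbb{C}^{m_k \times m_k}$.

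Finally, I would concatenate the unitary bases across all components $V_1,\ldots,V_s$ to form the columns of a single unitary matrix $Q$. By construction, $Q^* \mathcal{M} Q$ is block diagonal with the $k$-th block consisting of matrices of the form $I_{t_k} \otimes M$, $M \in \mathbb{C}^{m_k \times m_k}$, which is precisely a direct sum of $t_k$ copies of the basic $*$-algebra $\mathbb{C}^{m_k \times m_k}$. Since the full decomposition is the direct sum over $k$, $Q^*\mathcal{M} Q$ is the direct sum of basic $*$-algebras claimed.

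The main obstacle is the unitarity of $Q$: a purely algebraic Wedderburn argument would yield only an invertible conjugation. The crux is exploiting the $*$-structure twice---once to show semisimplicity via orthogonal complements, and again to choose the intertwining vectors $u_i$ inside each isotypic component as an \emph{orthonormal} set, which is what turns the abstract isomorphism $\mathcal{M}_k \cong I_{t_k}\otimes \mathbb{C}^{m_k\times m_k}$ into a unitary conjugation. Careful bookkeeping of the normalizations in the intertwiner construction (so that the isomorphism really is an isometry on each isotypic copy) is the delicate part.
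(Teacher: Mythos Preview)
The paper does not actually prove this theorem: it is stated with a citation to Wedderburn's original work and used as a black box. So there is no ``paper's own proof'' to compare against. Your outline is the standard modern proof of the Artin--Wedderburn decomposition for finite-dimensional matrix $*$-algebras over $\mathbb{C}$, and the strategy is sound: semisimplicity via orthogonal complements of ideals, decomposition into simple $*$-closed two-sided ideals with self-adjoint central idempotents, identification of each simple piece with a full matrix algebra, and then upgrading the abstract isomorphism to a unitary conjugation by choosing orthonormal intertwining vectors within each isotypic component.

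The one place to tighten is exactly the spot you flag. The claim that ``the map $X \mapsto Xu_i$ is an isometry from $W$ onto $\mathcal{M}_k u_i$ up to a fixed normalization'' is correct but deserves a one-line justification: if $p$ is a minimal self-adjoint projection and $u_i \in pV_k$ with $\|u_i\|=1$, then for $X,Y \in \mathcal{M}_k p$ one has $\langle Xu_i, Yu_i\rangle = u_i^* (Y^*X) u_i$, and $Y^*X \in p\mathcal{M}_k p = \mathbb{C}p$ (by minimality), so this equals $\lambda \|u_i\|^2 = \lambda$ where $Y^*X = \lambda p$; this scalar is independent of $i$, which is what makes the copies isometric to one another and to $W$. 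Once that is written down explicitly, the construction of the unitary $Q$ goes through without difficulty.
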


The above result is derived for a complex  matrix $*$-algebras. In~\cite{MR2685140},
the authors study   numerical algorithms for block-diagonalization of matrix $*$-algebras over  $\mathbb{R}$.
Unfortunately, the Wedderburn decomposition described in the above
theorem does not directly apply for  $*$-algebras over  reals.
To demonstrate our approach in the section on numerical results we use known orthogonal matrices or a simple heuristics to obtain them.

To simplify our presentation, the matrix $Q$ in~\Cref{wedd} is assumed to be real orthogonal.
(The case when $Q$ is complex can be derived analogously.)
Then, the matrices in the basis $B_{j},\, j= 1,\ldots,d$, can be mutually
block-diagonalized by some orthogonal matrix $Q$.
More precisely, there exists an orthogonal matrix $Q$ such that
we get the following block-diagonal transformation on $B_j$:
\index{$t$, number of blocks}
\index{number of blocks, $t$}
\index{$\Blkdiag$, block diagonal}
\index{block diagonal, $\Blkdiag$}
	\begin{equation}
\label{eq:Qblkdiag}
	\textdef{$\tilde B _j := Q^{T}B_{j}Q =:
    \Blkdiag( (\tilde B_j^k)_{k=1}^t)$},
	\forall j=1,\ldots,d.
	\end{equation}
	For $Q^{T}XQ =  \sum_{j=1}^{d}x_{j}\tilde{B}_{j}$, we now define the
linear transformation for obtaining the block matrix diagonal form:
\begin{equation}
\label{eq:Btildestarx}
	\textdef{$\BBts(x)$} :=  \sum_{j=1}^{d}x_{j}\tilde{B}_{j} = \begin{bmatrix}
	\BBts_{1}(x) &  & \\
	& \ddots & \\
	& & \BBts_{t}(x)
	\end{bmatrix}
=: \Blkdiag((\BBts_k(x))_{k=1}^t),
\end{equation}
	where
\[
\textdef{$\BBts_{k}$}(x)  =: \sum_{j=1}^d x_j\textdef{$\tilde B^k_j$}
 \in \mathcal{S}^{n_{i}}_+
\]
 is the $k$-th
diagonal block of $\BBts(x)$, and the sum of the $t$ block sizes
 $n_{1}+ \ldots +n_{t} = n$.
Thus, for any feasible $X$ we get
\[
X = \BBs(x) = Q\BBts(x)Q^T \in \mathcal{F}_{X}.
\]
\subsubsection{Second symmetry reduction to block diagonal form
using $X=Q\BBts(x)Q^T$}
\label{sect:secsymmred}
We now derive the second reduced program using the substitution
$X=Q\BBts(x)Q^T$. The program is further reduced since we obtain the block
diagonal problem
\index{substitution!block diagonal symmetry reduction}
	\begin{equation}
	\label{sdp_afterBts}
	p^*_{\SDP}=
	\min \{ \langle \BBt(\tilde C),x \rangle \;|\;
(\At\circ \BBts)(x)=b, \;\;  \BBts(x) \succeq 0\},
	\end{equation}
where $\tilde C = Q^TCQ$ and $\At$ is the linear transformation obtained
from $\A$ as follows: $\tilde A_j = Q^TA_jQ, \forall j$.
We denote the corresponding blocks as \textdef{$\tilde A_j^k$}$, \forall j =1,\ldots,d, \forall k = 1,\ldots, t$.
\label{pg:congruence}

We see that the objective in~\cref{sdp_afterBts} satisfies
\[
\textdef{$\tilde c : = \BBt(\tilde C)$} =
(\langle \tilde B_j, \tilde C \rangle) =
(\langle B_j, C \rangle)\in \R^d.
\]
While the $i$-th row of the linear equality constraint
in~\cref{sdp_afterBts}, \textdef{$\tilde A x = b$}, is
\[
\begin{array}{rcl}
b_i
&=&
 (\tilde Ax)_i
\\&:=&
 ((\At\circ \BBts)(x))_i
\\&=&
\langle \tilde A_i, \BBts(x) \rangle
\\&=&
\langle \BBt(\tilde A_i), x \rangle.
\end{array}
\]
Therefore
\begin{equation}
\label{eq:matrixrepresA}
\tilde A_{ij} = (\BBt(\tilde A_i))_j
= \langle \tilde B_j,\tilde A_i\rangle
= \langle B_j,A_i\rangle,
	 \quad i=1, \ldots ,m, \, j=1,\ldots ,d.
\end{equation}
Without loss of generality, we can now define
\[
c:=\tilde c, \quad A := \tilde A.
\]
Moreover, just as for \FRp, the \SR step can
result in $A$ not being full row rank (onto). We then have to choose a
nice (well conditioned) submatrix that is full row rank and use the
resulting subsystem of $Ax=b$. We see below how to do this and
simultaneously obtain strict feasibility.
	
	We can now rewrite the \SDP~\cref{sdp_standard} as
	\begin{equation}
\label{sdp_sys_reduced}
	p^*_{\SDP}=
	\min \{ c^{T}x \;|\; \textdef{$Ax=b$},
	\;\;  \BBts_{k}(x) \succeq 0, \,  k = 1,\ldots,t \}.
	\end{equation}
	For many applications, there are \textdef{repeated blocks}. We
	then take advantage of this to reduce the size of the problem
	and maintain stability.

	The program~\cref{sdp_sys_reduced} is a
	\textdef{symmetry reduced formulation} of~\cref{sdp_standard}. We
	denote its feasible set and feasible slacks as
	\index{$\SSx$, feasible set with $\BBts(x)$}
	\index{feasible set with $\BBts(x)$, $\SSx$}
	\index{$\FF_x$, feasible set with $x$}
	\index{feasible set with $x$, $\FF_x$}
	\begin{equation}
\label{eq:feasBBs}
	\FF_x := \{x \;|\; \BBts(x)\succeq 0,\,
	          Ax=b,\,x\in \R^d\},\quad
	\SSx := \{\BBts(x)\succeq 0\;|\; Ax=b,\,x\in \R^d\}.
	\end{equation}
We denote the \emph{feasibility problem}
for this formulation with data $\BBts,A,b, \Snp$
of the feasible set $\FF_x$ as $\PP_{F_x}$.
	We bear in mind that $\BBts(x)$ is a block-diagonal matrix.
But it is written as a single matrix for convenience in order to
describe \FR for the symmetry reduced program below.

	\index{$\PP_{F_x}$, feasible problem}
	\index{feasible problem$, \PP_{F_x}$}

	Since $\tilde{B}_{1},\ldots,\tilde{B}_{d}$ are block diagonal,
	symmetric matrices, the symmetry reduced formulation is typically much
	smaller than the original problem,
	i.e.,
	\[
	x \in  \Rd, \quad d \ll \sum_{i=1}^dt(n_i) \ll t(n),
	\]
	where $t(k)=k(k+1)/2$ is the triangular number.
	\index{$t(k)=k(k+1)/2$, triangular number}
	\index{triangular number, $t(k)=k(k+1)/2$}

	\section{Facial reduction for the symmetry reduced program}
	\label{sect:FRforsymm}
	In this section, we show how to apply \FR to the
	symmetry reduced  \SDP~\cref{sdp_sys_reduced}.
The key is using the exposing vector view of facial
reduction,~\cite{DrusWolk:16}.
Formally speaking, if an exposing vector of the \textdef{minimal face}
of the \SDP~\cref{sdp_standard} is given, then we are able to construct
a corresponding exposing vector of the minimal face of the symmetry
reduced program~\cref{sdp_sys_reduced}.
In fact, we show that all the exposing vectors of the symmetry reduced program
can be obtained from the exposing vectors of the original program.
In general, one can find exposing vectors from
the original program by exploiting the structure. However, this is lost
after the \SR and results in a more difficult task in finding an
exposing vector.

In addition, we follow the above theme on simply adding on the nonnegativities and
extend many of the results to the \DNN program.
We include results on the singularity degree to emphasize the importance
of \FR for stability and that \SR does \emph{not} increase the singularity degree.

\subsection{Rank preserving}
\label{sect:rankpres}
We begin with showing the maximum rank preserving properties of \SRp. Note that
\[
\begin{array}{rcl}
\max \{\rank(X) \,|\, X \in \FF_X  \}
&=&
\rank(X), \,\, \forall X \in \relint(\FF_X)
\\&=&
\rank(X), \,\, \forall X \in \relint(\face(\FF_X)),
\end{array}
\]
where $\face(\FF_X)$ is the minimal face of $\Snp$ containing the feasible set.
We let $F \unlhd K$ denote that $F$ is a face of the cone $K$.
\index{$\face(S)$, minimal face of $\Snp$ containing $S$}
\index{minimal face of $\Snp$ containing $S$, $\face(S)$}
	\begin{theorem}\label{thm:rank}
Let $r = \max \{ \rank(X) \,|\, X\in \FF_X \}.$ Then
\[
\begin{array}{rcl}
r&=&
\max\left\{\rank\left( {\frac{1}{|\GG|}\sum_{P \in \GG}P^{T}XP} \right)
 \, | \, X \in \FF_X \right\}
\quad \left(=\max \{ \rank(\RRG(X)) \,|\, X \in \FF_X \}\right)
\\&=&
  \max \{ \rank(X) \,|\, X\in \FF_X \cap A_{\GG} \}
\\&=&
  \max \{ \rank(\BBts(x)) \,|\, \BBts(x) \in \SSx \}.
\end{array}
\]

	\end{theorem}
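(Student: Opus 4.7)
The plan is to prove the three equalities by working through the Reynolds operator, since it links the feasible set $\FF_X$, its intersection with the commutant $A_\GG$, and the block-diagonal parametrization via $\BBts$. In each of the three identities the direction "$\le r$" is routine (everything on the right is drawn from a subset or image of $\FF_X$, and the objects involved are all rank-$\le n$ PSD matrices in $\FF_X$ or similar images of it), so the real content is establishing the reverse inequality "$\ge r$" in the first step; the remaining two then follow almost formally.

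First, pick $X^\ast \in \FF_X$ attaining the maximum, $\rank(X^\ast)=r$. Since the Reynolds operator $\RRG$ maps $\FF_X$ into itself (as recorded just before \Cref{thmAlg}), every $P^TX^\ast P$ is in $\FF_X$, and because $P$ is a permutation matrix, $\rank(P^TX^\ast P)=\rank(X^\ast)=r$. The key observation is that for PSD matrices the range of a nonnegative combination equals the sum of ranges:
\[
\range\bigl(\RRG(X^\ast)\bigr) = \range\!\Bigl(\tfrac{1}{|\GG|}\!\sum_{P\in\GG}P^TX^\ast P\Bigr) = \sum_{P\in\GG}\range(P^TX^\ast P),
\]
so $\rank(\RRG(X^\ast))\ge r$. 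Combined with $\RRG(X^\ast)\in\FF_X$ and hence $\rank(\RRG(X^\ast))\le r$, this gives $\rank(\RRG(X^\ast))=r$, which proves the first equality.

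For the second equality, recall that $\RRG$ is the orthogonal projection onto the commutant $A_\GG$, so $\RRG(X^\ast)\in\FF_X\cap A_\GG$ already has rank $r$, forcing $\max\{\rank(X)\,|\,X\in\FF_X\cap A_\GG\}\ge r$; the reverse inequality is immediate from $\FF_X\cap A_\GG\subseteq\FF_X$. For the third equality, invoke the parametrization from \eqref{eq:FxAg} and \eqref{eq:Btildestarx}: any $X\in\FF_X\cap A_\GG$ is of the form $X=\BBs(x)=Q\BBts(x)Q^T$ for some $x\in\R^d$, and because $Q$ is orthogonal, $\rank(X)=\rank(\BBts(x))$. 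This bijection identifies $\{\rank(X):X\in\FF_X\cap A_\GG\}$ with $\{\rank(\BBts(x)):\BBts(x)\in\SSx\}$, yielding the final equality.

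The only genuinely substantive point is the first one: the fact that averaging over $\GG$ cannot destroy rank once the orbit consists of rank-$r$ PSD matrices on which it acts by congruence. Everything else is bookkeeping: invariance of rank under orthogonal congruence via $Q$, the identification of $A_\GG$ as the image of $\RRG$, and the definitional fact that $\RRG$ maps $\FF_X$ into $\FF_X\cap A_\GG$.
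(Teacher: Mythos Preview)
Your proof is correct and follows essentially the same approach as the paper's: choose a maximum-rank feasible $X$, observe each $P^TXP$ is feasible of the same rank, use that summing PSD matrices cannot lower rank (you phrase this via $\range(\sum)=\sum\range$, the paper via $\rank(A+B)\ge\max\{\rank A,\rank B\}$), and then pass to $\SSx$ by the orthogonal congruence with $Q$. The only cosmetic difference is that the paper frames the rank preservation by saying $\RRG(X)$ lands in $\relint$ of the minimal face, whereas you argue directly with ranks; the content is identical.
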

	\begin{proof}
	Let $X \in \FF_X$ be the matrix with maximum rank $r$.
	Then $X$ is in the relative interior of the minimal face $f\unlhd \Snp$ containing $\mathcal{F}_{X}$, i.e.,
	\[
	X\in \relint(f) =
	\begin{bmatrix}
	  V & U
	    \end{bmatrix}
	\begin{bmatrix}
	\Ss^r_{++} & 0\cr
	0  & 0
	    \end{bmatrix}
	\begin{bmatrix}
	  V & U
	    \end{bmatrix}^{\Tr},  \, \text{ for some orthogonal  }
\begin{bmatrix} V & U     \end{bmatrix}.
	\]
		
	The nonsingular congruence $P^{T}XP$ is feasible for each $P \in \GG$,
	and also has rank $r$. Note that
\[
 A,B \in \Snp \implies \rank(A+B) \geq \max\{ \rank(A), \rank(B)\}.
\]
Therefore, applying the Reynolds operator, we have
	\[
	X_{0} = {\frac{1}{|\GG|}\sum_{P \in \GG}P^{T}XP}
	\in \relint(f).
	\]
	Since $X_{0} \in {\A} _{\GG}$, we have
$Q^{T}X_{0}Q \in \SSx \,(= Q^T(\FF_X \cap A_{\GG})Q)$
and it has rank $r$, where $Q$ is the
orthogonal matrix given above in~\cref{eq:Qblkdiag}.
	
	Conversely, if $\BBts(x) \in \SSx$ with rank $r$, then $X := Q
\BBts(x) Q^{T}$ is in $\mathcal{F}_{X}$ with rank $r$.
	\end{proof}
Note that in the proof of~\Cref{thm:rank} we exploit the following known properties of the Reynolds operator:
	$\rank(\RRG(X)) \geq \rank(X),$ which  is  valid for all $X$ that are positive semidefinite, and $\RRG(F_X) =\F_X\cap A_\GG$.

	\begin{corollary}\label{cor:rank}
	The program~\cref{sdp_standard} is strictly feasible if, and
only if, its symmetry reduced program~\cref{sdp_sys_reduced} is strictly feasible.
	\end{corollary}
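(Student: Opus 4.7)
The plan is to derive this as an essentially immediate consequence of~\Cref{thm:rank}. The key observation is that strict feasibility can be rephrased as a statement about maximum rank. For~\cref{sdp_standard}, since an $n\times n$ matrix $X\in\Snp$ is strictly positive definite if and only if $\rank(X)=n$, strict feasibility is equivalent to the condition
\[
\max\{\rank(X)\,|\,X\in\FF_X\}=n.
\]
For the symmetry reduced program~\cref{sdp_sys_reduced}, I would use that $\BBts(x)$ is block diagonal with diagonal blocks $\BBts_k(x)$, $k=1,\ldots,t$, whose orders $n_k$ satisfy $n_1+\cdots+n_t=n$; hence $\BBts(x)\succ 0$ if and only if every block is positive definite, equivalently $\rank(\BBts(x))=n$. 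Therefore strict feasibility of~\cref{sdp_sys_reduced} is the condition
\[
\max\{\rank(\BBts(x))\,|\,\BBts(x)\in\SSx\}=n.
\]

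With these two reformulations in place, the equality
\[
\max\{\rank(X)\,|\,X\in\FF_X\}=\max\{\rank(\BBts(x))\,|\,\BBts(x)\in\SSx\}
\]
supplied by~\Cref{thm:rank} closes both implications at once: one side attains the value $n$ precisely when the other does. No further construction is required, because the nontrivial content — that averaging by the Reynolds operator and conjugating by $Q$ preserves the maximum rank $r$ — has already been carried out in the proof of~\Cref{thm:rank}.

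The only step requiring any care is the equivalence between positive definiteness of the block diagonal matrix $\BBts(x)$ and its having full rank $n$, and this is routine once one remembers that the block sizes sum to $n$. Thus I do not anticipate any real obstacle; the corollary is a direct unpacking of~\Cref{thm:rank} at the maximal rank value $r=n$.
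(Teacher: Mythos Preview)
Your proposal is correct and matches the paper's intended approach: the corollary is stated immediately after \Cref{thm:rank} without a separate proof, precisely because it is the special case $r=n$ of that theorem. Your unpacking of strict feasibility as the maximum-rank condition on each side, together with the block-size identity $n_1+\cdots+n_t=n$, is exactly what is needed.
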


\begin{remark}
From the proof of~\Cref{thm:rank},
if there is a linear transformation $X = \LL(x)$
with a full rank feasible $\hat X \in \range(\LL), \hat X = \LL(\hat x)$,
then in general we can conclude that the substitution $X=\LL(x)$
results in a \emph{smaller} \SDP with strict feasibility holding at $\hat x$,
i.e.,
\[
\hat X\succ 0, \A(\hat X)=b, \hat X = \LL(\hat x) \implies
\LL(\hat x)\succ 0, (\A\circ\LL)(\hat x) = b.
\]
\end{remark}

\subsection{Exposing vectors}
\label{sect:FRandexp}
For many given combinatorial problems, the semidefinite
relaxation is not strictly feasible, i.e.,~it is degenerate,
ill-posed, and we can apply
\FR~\cite{DrusWolk:16,Sremac:2019,permenter2017reduction}.
From \Cref{sect:rankpres} above, we see that this implies that the symmetry reduced
problem is degenerate as well. Although both \SR and \FR can be performed separately to obtain two independent problems,
there has not been any study  that implements these techniques simultaneously
and efficiently  for  \SDPs, i.e.,~to obtain a symmetry
reduced problem  that also guarantees strict feasibility.
Recall that \cite{Lofberg} combines  partial  \FR and  \SR  for solving {\bf SOS} programs.
	
	In what follows, we show that the exposing vectors of the
symmetry reduced program~\cref{sdp_sys_reduced} can be obtained from the
exposing vectors of the original program~\cref{sdp_standard}.
This enables us to facially reduce the symmetry
reduced program~\cref{sdp_sys_reduced} using the structure from the original problem.
	
	Let $W = UU^{T}$, with $U\in  \R^{n\times (n-r)}$ full column rank;
and let $W$  be  a nonzero exposing vector of a face of $\Snp$ containing the feasible region $\FF_X$ of~\eqref{sdp_standard}.
Let $V \in \R^{n \times r}$ be such that
\index{$n$, order of \SDP matrices}
\index{order of \SDP matrices, $n$}
\index{$m$, number of constraints in \SDPp}
\index{number of constraints in \SDPp, $m$}
\[
\Range(V) = \Null(U^T).
\]
Then \FR means that we can use the substitution
$X=\VV^*(R)=VRV^T$ and obtain the following equivalent, smaller,
formulation of~\eqref{sdp_standard}:
	\begin{equation}\label{sdp_facial}
	 p^*_{\SDP} = \min \{ \langle V^{T}CV,R \rangle \;|\; \langle V^{T}A_{i}V,R \rangle  = b_{i},  \;\;
i \in \II \subseteq \{1,\ldots,m\},
\;\;  R \in \Ss^r_+\}.
	\end{equation}
If $V$ exposes the minimal face containing $\FF_X$, then strict
feasibility holds.
In fact, $\hat R$ strictly feasible corresponds to $\hat X=\VV^*(\hat R) \in \relint(\FF_X)$.
	
The following results show how to find  an exposing vector that
is in the \underline{commutant $A_{\GG}$}.
\index{$F \unlhd K$, face}
\index{face, $F \unlhd K$}
\begin{lemma} \label{lemma:exposing_vectorinAG}
Let $W$ be an exposing vector of rank $d$ of a face  $\FF \unlhd \Snp, \FF_X\subseteq \FF$.
Then there exists an exposing vector $W_{\GG} \in A_{\GG}$ of $\FF$
with $\rank(W_{\GG}) \geq d$.
	\end{lemma}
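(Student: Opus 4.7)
The natural candidate is the Reynolds average of $W$: set
\[
W_\GG := \RRG(W) = \frac{1}{|\GG|}\sum_{P\in\GG} P W P^{T}.
\]
By the very definition of the Reynolds operator, $W_\GG \in A_\GG$, so the commutant-membership requirement is immediate. Three things remain: (i) $W_\GG \succeq 0$; (ii) $\rank(W_\GG)\geq d$; (iii) $W_\GG$ exposes $\FF$, i.e.\ $\langle W_\GG, X\rangle = 0$ for every $X\in\FF$.

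Item (i) is free, since each summand $PWP^{T}$ is PSD (conjugation by a permutation preserves PSD-ness) and a conic combination of PSD matrices is PSD. For (ii), the familiar inequality $\rank(A+B)\geq \max\{\rank(A),\rank(B)\}$ for $A,B\in\Snp$, together with $\rank(PWP^{T})=\rank(W)=d$ for every invertible $P$, gives $\rank(W_\GG)\geq d$; this is the same rank-nondecreasing property of $\RRG$ that was already exploited in the proof of \Cref{thm:rank}. The main step is (iii): for any $X\in\FF$,
\[
\langle W_\GG, X\rangle \;=\; \frac{1}{|\GG|}\sum_{P\in\GG}\langle PWP^{T}, X\rangle \;=\; \frac{1}{|\GG|}\sum_{P\in\GG}\langle W, P^{T}XP\rangle,
\]
and each term vanishes provided $P^{T}XP\in\FF$ for all $P\in\GG$, i.e.\ provided $\FF$ is $\GG$-invariant.

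\emph{The main obstacle} is therefore securing $\GG$-invariance of the face $\FF$. Since the data matrices of~\cref{sdp_standard} are $\GG$-invariant, so is the feasible set $\FF_X$, and hence so is the unique minimal face $\face(\FF_X)$ containing it. The argument is therefore automatic when $\FF=\face(\FF_X)$; for a more general $\FF\supseteq\FF_X$ one may replace $\FF$ by the $\GG$-invariant face $\bigcap_{P\in\GG} P\FF P^{T}$ (still containing $\FF_X$), to which the three verifications above apply verbatim. This group-averaging reduction is the only place where the symmetry hypothesis is genuinely used, and once it is arranged all conclusions follow and $W_\GG$ serves as an exposing vector of $\FF$ in $A_\GG$ with rank at least $d$, as required for the subsequent \FR step in \Cref{sect:FRforsymm}.
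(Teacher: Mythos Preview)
Your approach is essentially identical to the paper's: form the Reynolds average $W_\GG = \RRG(W)$, then verify PSD-ness, the rank lower bound via $\rank(A+B)\geq\max\{\rank A,\rank B\}$ for $A,B\succeq 0$, and orthogonality using $\GG$-invariance. The paper's own proof actually only checks $\langle W_\GG, X\rangle = 0$ for $X\in\FF_X$ (exploiting $\GG$-invariance of the feasible set), so your discussion of why $\GG$-invariance of the face $\FF$ itself is needed to literally expose $\FF$---and your workaround via $\bigcap_{P\in\GG}P\FF P^T$---is a point of additional rigor that the paper elides; for the intended application to the minimal face this distinction disappears.
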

	\begin{proof}
Let $W$ be the exposing vector of rank $d$, i.e., ~$W\succeq 0$ and
\[
X \in \FF_X \implies \langle W , X \rangle = 0.
\]
	Since~\cref{sdp_standard} is  $\GG$-invariant,
$PXP^{T} \in \FF_X$ for every $P \in \GG$, we conclude that
	$$\langle W, PXP^{T} \rangle = \langle P^{T}WP, X \rangle =  0.$$
	Therefore, $P^{T}WP \succeq 0$ is an exposing vector of rank $d$.
	Thus $W_{\GG} = \frac{1}{|\GG|} \sum_{P \in \GG} P^{T}WP$ is an exposing vector of $\FF$.

That the rank is at least $d$ follows from taking the sum of nonsingular congruences of $W\succeq~0$.
	\end{proof}
	
\Cref{lemma:exposing_vectorinAG} shows that $A_{\GG}$ contains exposing vectors.
This result is a valuable addition to the list of objects that exhibit
symmetry, see for example:
dual solutions and the central path in~\cite{KoKoHa:94};
solutions on the central path and some search directions of primal-dual
interior-point methods, in~\cite{KannoOhsakiMurotaKath};
and infeasibility certificates, in~\cite{ParriloPermenterP20}.

Note that one can obtain an exposing vector $W_{\GG} \in 	A_{\GG}$ from an exposing vector $W$ by using
the Reynolds operator, as done in~\Cref{lemma:exposing_vectorinAG}.
However, in some cases $W_{\GG}$ can be more easily derived, as our
examples in the later numerical sections show.
We now continue and show that $Q^{T}W_{\GG} Q$ is also an exposing vector.
	
\begin{lemma}\label{prop:exposeG1}
Let  $W \in A_{\GG}$ be an exposing vector of a face
$\FF\unlhd \Snp, \FF_X\subseteq \FF$,
Let $Q$ be the orthogonal matrix given above in~\cref{eq:Qblkdiag}.
	Then $\widetilde{W} = Q^{T}WQ$ exposes  a face of $\Snp$ containing $\SSx$.		
	\end{lemma}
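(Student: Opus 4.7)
The plan is to verify directly the two conditions defining an exposing vector for $\SSx$: namely, $\widetilde{W} \succeq 0$, and $\langle \widetilde{W}, S \rangle = 0$ for every $S \in \SSx$. Positive semidefiniteness is immediate: since $Q$ is orthogonal and $W \succeq 0$, the orthogonal congruence $\widetilde{W} = Q^T W Q$ is positive semidefinite.

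For the annihilation property I would exploit the correspondence already used in the proof of~\Cref{thm:rank}: for every $\BBts(x) \in \SSx$, the matrix
\[
X \,:=\, Q\,\BBts(x)\,Q^T \,=\, \BBs(x)
\]
lies in $\FF_X \cap A_{\GG}$, and in particular $X \in \FF_X \subseteq \FF$. Since $W$ exposes $\FF$, this gives $\langle W, X \rangle = 0$. Applying the adjoint identity for orthogonal congruence yields
\[
\langle \widetilde{W}, \BBts(x) \rangle \,=\, \langle Q^T W Q, \BBts(x) \rangle \,=\, \langle W, Q\,\BBts(x)\,Q^T \rangle \,=\, \langle W, X \rangle \,=\, 0,
\]
so $\widetilde{W}$ annihilates every element of $\SSx$. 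Combining this with $\widetilde{W} \succeq 0$ shows that the face $\{ S \in \Snp \mid \langle \widetilde{W}, S \rangle = 0 \}$ exposed by $\widetilde{W}$ in $\Snp$ contains $\SSx$, which is what the lemma asserts.

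I do not anticipate a serious obstacle: the argument is a short definition-chase combined with the trace identity $\langle Q^T W Q, M \rangle = \langle W, Q M Q^T \rangle$ for orthogonal $Q$. The only point worth flagging is that we do not actually need $X \in A_{\GG}$ here — the weaker statement $X \in \FF_X \subseteq \FF$ is enough — which is why the hypothesis $W \in A_{\GG}$ is used only to guarantee the pullback $\widetilde{W}$ has the correct block-diagonal shape compatible with $\BBts$, not in the exposedness calculation itself.
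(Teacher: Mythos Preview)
Your proposal is correct and follows essentially the same approach as the paper: both arguments verify $\widetilde{W}\succeq 0$ via orthogonal congruence and then use the correspondence $X = Q\,\BBts(x)\,Q^T \in \FF_X$ to transfer the exposing property through $Q$. The only cosmetic difference is that the paper phrases annihilation as the matrix product $WX=0$ (hence $\widetilde{W}Z = Q^T WX Q = 0$), whereas you use the equivalent trace identity $\langle Q^TWQ,\BBts(x)\rangle = \langle W,X\rangle = 0$; the paper likewise notes, as you do, that the hypothesis $W\in A_{\GG}$ serves only to make $\widetilde{W}$ block-diagonal.
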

	
	\begin{proof}
\index{$\blkdiag$, adjoint of $\Blkdiag$}
\index{adjoint of $\Blkdiag$, $\blkdiag$}
	Let	
		\[
		Z =\sum_{i=1}^{d} x_i \tilde B_i = Q^{T}\left( \sum_{i=1}^{d} x_i B_i\right)Q
		\in \SSx.
		\]
		Then, by construction $Z$ is a block-diagonal matrix,
		say $Z = \Blkdiag(Z_{1},\ldots,Z_{t})$.
	Now, since $W$ is an exposing vector of the face of $\Snp$ containing  $\FF_X$ we have
		\[
		\begin{array}{rcll}
		WX = 0, ~\forall X \in \FF_X
		&\implies &
		WX = 0, & \forall X =\sum\limits_i x_i B_i\succeq 0,
       \mbox{   for some   } x \text{   with   } Ax=b 	\\[1ex]
	&\implies &
		\widetilde{W}Z = 0, & \forall Z \in \SSx,
		\\
		\end{array}
		\]
		where $\widetilde{W} = Q^{T}WQ \succeq 0$. Thus, $\widetilde{W}$ is an
	exposing vector of a proper face of $\Snp$ containing $\SSx$.
		
		Since $Z = \Blkdiag(Z_{1},\ldots,Z_{t})$ is a block-diagonal
	matrix and $W \in A_{\GG}$, we have that
	  $\widetilde{W} = \Blkdiag(\widetilde{W}_{1},\ldots,\widetilde{W}_{t})$ with $\widetilde{W}_{i}$ the corresponding $i$-th diagonal block of $Q^{T}WQ$.
	\end{proof}
	
	Since we may assume $W \in A_{\GG}$, the exposing vector $Q^{T}WQ$ is a block-diagonal matrix.
	Now, let us show that $Q^{T}WQ$ exposes the minimal face of $\Snp$
	containing $\SSx$, $\face(\SSx)$.  It suffices to show that the rank of
	$Q^{T}WQ$  is $n-r$, see~\Cref{thm:rank}.
	
	\begin{theorem}
\label{thm:exposeG2}
	Let $W\in A_{\GG}$ be an exposing vector of $\face(\FF_X)$,
the minimal face of $\Snp$ containing $\FF_X$.
Then the block-diagonal matrix $\widetilde{W} = Q^{T}WQ$ exposes $\face(\SSx)$,
the minimal face of $\Snp$ containing  $\SSx$.
	\end{theorem}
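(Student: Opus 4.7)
The plan is to combine Lemma \ref{prop:exposeG1} (which guarantees that $\widetilde W$ exposes \emph{some} face containing $\SSx$) with a rank count to pin down that face as the minimal one. As flagged in the paragraph preceding the theorem, it suffices to show $\rank(\widetilde W)=n-r$, where $r=\max\{\rank(X):X\in\FF_X\}$.

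First I would record the rank of $W$. Because $W$ exposes the minimal face $\face(\FF_X)$ of $\Snp$, one has the standard complementarity $\range(W)=\Null(V^T)$, where $V\in\R^{n\times r}$ satisfies $\face(\FF_X)=V\Ss^r_+V^T$; hence $\rank(W)=n-r$. Since $Q$ is orthogonal, $\rank(\widetilde W)=\rank(Q^TWQ)=\rank(W)=n-r$.

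Next I would invoke Theorem \ref{thm:rank} to transport the rank parameter across the reduction: the maximum rank attained over $\SSx$ is also $r$. Consequently $\face(\SSx)=V'\Ss^r_+(V')^T$ for some full column rank $V'\in\R^{n\times r}$, and any positive semidefinite exposing vector $Z$ of $\face(\SSx)$ must satisfy $ZV'=0$, forcing $\rank(Z)\le n-r$, with equality exactly when $Z$ exposes the \emph{minimal} face (a strictly larger exposed face would have strictly smaller codimension and hence strictly smaller exposing rank).

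Finally I would close the argument. By Lemma \ref{prop:exposeG1}, $\widetilde W\succeq 0$ exposes some face $F$ of $\Snp$ with $\SSx\subseteq F$, so $\face(\SSx)\unlhd F$ and the exposing rank bound gives $\rank(\widetilde W)\le n-\dim\text{-parameter of }F\le n-r$. Since we have already shown $\rank(\widetilde W)=n-r$, equality forces $F=\face(\SSx)$, proving the theorem. The only place where care is needed is the rank-codimension correspondence in step three; once Theorem \ref{thm:rank} is invoked, however, everything is a direct orthogonal-invariance computation, so I expect no serious obstacle beyond writing the inequalities cleanly.
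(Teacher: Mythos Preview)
Your proposal is correct and follows exactly the paper's approach: invoke Lemma~\ref{prop:exposeG1} for the exposing property, use orthogonality of $Q$ to get $\rank(\widetilde W)=\rank(W)=n-r$, and appeal to Theorem~\ref{thm:rank} for the matching maximal rank on $\SSx$. The paper compresses all of this into a single sentence, but your expanded rank--codimension bookkeeping is precisely the content behind it.
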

	\begin{proof}
	The minimality follows from~\Cref{thm:rank}, as $\rank(\widetilde{W})=\rank(W) = n-r$.
	\end{proof}
	
	Now let  $\widetilde{W} = Q^{T}WQ$ expose the minimal
face of $\Snp$ containing  $\SSx$, and let
\[
\widetilde{W} =\Blkdiag(\widetilde{W}_{1},\ldots,\widetilde{W}_{t}),
\quad \widetilde{W}_{i}=\tilde{U}_i\tilde{U}_i^T, \,\tilde U_i \text{
full rank, } \,i=1,\ldots,t.
\]
Let  $\tilde{V}_i$  be a full rank matrix whose columns form a basis for the
orthogonal complement to the columns of $\tilde{U}_{i}, i=1,\ldots,t$.
Take $\tilde{V} = \Blkdiag(\tilde{V}_{1},\ldots,\tilde{V}_{t})$.
Then, the facially reduced formulation of \cref{sdp_sys_reduced} is
	\index{\FR optimal value, $p^*_{\FR}$}
	\index{$p^*_{\FR}$, \FR optimal value}
\begin{align}
\label{sdp_sys_facial}
\begin{split}
p^*_{FR} = & \min \{ c^{T}x \;|\; \textdef{$Ax=b$},
\;\;   \BBts(x) = \tilde{V}\tilde{R}\tilde{V}^{T}, \; \tilde{R} \succeq 0\}\\
= & \min \{ c^{T}x \;|\; \textdef{$Ax=b$},
\;\;  \BBts_{k}(x) = \tilde{V}_{k}\tilde{R}_{k}\tilde{V}_{k}^{T}, \; \tilde{R}_{k} \succeq 0, \; \forall k = 1,\ldots,t\},
\end{split}
\end{align}
where $\tilde{V}_{k}\tilde{R}_{k}\tilde{V}_{k}^{T}$ is the corresponding $k$-th block of $\BBts(x)$,
and $\tilde{R} =  \Blkdiag(\tilde{R}_{1},\ldots,\tilde{R}_{t})$.
Note that some of the blocks $\BBts_{k}(x)$,  and  corresponding $\tilde{R}_{k}$, might be the same and thus can be
removed in the computation, see~\Cref{wedd}.

\begin{remark}\label{rem:steps}  We have assumed that an exposing vector of
the minimum face of the original \SDP \cref{sdp_standard} is available.
If this is not the case,
then we can find a strictly feasible formulation of \cref{sdp_standard},
and an exposing vector of the minimum face for the original problem,
by using a finite number (at most  $\min\{m,n-1\}$) facial reduction
steps, e.g.,~\cite{DrusWolk:16,SWW:17,Sremac:2019}.

We note here that  reduction techniques based on the Constrained Set Invariance Conditions, such as $*$-algebra techniques,
can obtain strict feasibility by removing zero blocks after the appropriate
projection, see~\cite{permenter2017reduction}.
\end{remark}

\subsubsection{Order of reductions} \label{sect:reductionOrder}
To obtain the combined symmetry and facially reduced semidefinite
program~\cref{sdp_sys_facial},
we first apply \SR to~\cref{sdp_standard}, and then follow this with
\FR  to the form in \cref{sdp_sys_reduced}.
A natural question is whether the order of reduction matters.

Note that the objective $\langle V^{T}CV, R \rangle$ and the constraints
$\langle V^{T}A_{i}V,R \rangle  = b_{i},  \;  i \in \II \subseteq
\{1,\ldots,m\}$, of the facially reduced program \eqref{sdp_facialone}, see also \eqref{sdp_facial},
depend on the choice of $V$.
We now show that the choice of this $V$ is crucial when reversing the
order of reductions \FR and \SRp.
For a naive choice of $V$, we can lose all symmetries structure
for \SR in~\Cref{sec_group}.
For example, assume the data matrices $C,A_{1},\ldots,A_{m}$ of the
original problem are invariant under a non-trivial permutation group
$\GG$, i.e.,~they are in the commutant $\A_\GG$,
see~\Cref{eq:commutant}.
However the data matrices $V^{T}CV,V^{T}A_{1}V,\ldots,V^{T}A_{m}V$ of the
facially reduced problem may not be invariant under any non-trivial
group of permutation matrices for the given $V$.
Note that we can always replace $V \leftarrow VS$ using any invertible
$S$. Then an arbitrary invertible congruence $S^TV^TA_iVS$ will destroy the
symmetry structure in the constraint matrices.

\begin{lemma}\label{ro_range}
Let $V, \tilde{V}, Q$ be given as in the above paragraph, and
in~\Cref{thm:exposeG2} and \cref{sdp_sys_facial}. Then
\[
\range(V) =  \range(Q\tilde{V}).
\]
\end{lemma}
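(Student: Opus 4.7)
The plan is to identify both subspaces $\range(V)$ and $\range(Q\tilde V)$ with (a suitable rotation of) $\Null(W)$, using the exposing-vector characterization.

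First, I would reinterpret $\range(V)$ in terms of the nullspace of $W$. By construction $W=UU^T$ with $U$ of full column rank and $\range(V)=\Null(U^T)$. Since $\Null(UU^T)=\Null(U^T)$, this gives $\range(V)=\Null(W)$. Symmetrically, each block $\widetilde{W}_i=\tilde U_i\tilde U_i^T$ satisfies $\Null(\widetilde{W}_i)=\Null(\tilde U_i^T)$, and $\tilde V_i$ was chosen with columns forming a basis of that orthogonal complement, so $\range(\tilde V_i)=\Null(\widetilde{W}_i)$. Assembling the blocks, $\range(\tilde V)=\Null(\widetilde{W})$.

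Second, I would transport the nullspaces through $Q$. Since $\widetilde W=Q^TWQ$ and $Q$ is orthogonal, a vector $z$ lies in $\Null(\widetilde W)$ iff $Qz\in\Null(W)$, so $\Null(\widetilde W)=Q^T\Null(W)$, equivalently $\Null(W)=Q\,\Null(\widetilde W)$. A consistency check: by \Cref{thm:rank} the maximum rank is preserved under symmetry reduction, so $\dim\Null(W)=n-r=\sum_i(n_i-r_i)=\dim\Null(\widetilde W)$, matching the dimensions of $\range(V)$ and $\range(\tilde V)$ via the full-column-rank choices of $V$ and $\tilde V$.

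Combining the two steps,
\[
\range(V)=\Null(W)=Q\,\Null(\widetilde W)=Q\,\range(\tilde V)=\range(Q\tilde V),
\]
which is the claim. There is no real obstacle here: the lemma is essentially a bookkeeping statement reconciling the exposing-vector descriptions of the minimal face before and after the orthogonal change of basis $Q$. The only point to be careful with is that $\tilde V$ must be assembled block-diagonally so that its range is exactly the block-diagonal nullspace of $\widetilde W$, which is guaranteed because $\widetilde W$ itself is block-diagonal.
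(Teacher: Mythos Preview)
Your proof is correct and follows essentially the same approach as the paper: both identify $\range(V)$ with $\Null(W)$, identify $\range(\tilde V)$ with $\Null(\widetilde W)=\Null(Q^TWQ)$, and then use orthogonality of $Q$ to transport one nullspace to the other. The paper's argument is just the terse implication chain $\range(\tilde V)=\nul(Q^TWQ)\Rightarrow Q\,\range(\tilde V)=\nul(Q^TWQQ^T)=\nul(W)$, while you spell out the block-diagonal assembly and add the (unnecessary but harmless) dimension consistency check via \Cref{thm:rank}.
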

\begin{proof}
\[
\begin{array}{rcl}
\range(\tilde{V}) = \nul(Q^{T}WQ)
  &\implies &
Q(\range(\tilde{V})) = Q(\nul(Q^{T}WQ))
  \\&\implies &
\range(Q\tilde{V}) = \nul(Q^{T}WQQ^T)
  \\&\implies &
\range(Q\tilde{V}) = \nul(W).
\end{array}
\]
\end{proof}

From \Cref{ro_range}, we can set $V = Q\tilde{V}$ for \FRp.
The objective and constraints become
$$
\langle V^{T}CV ,R \rangle =
 \langle \tilde{V}^{T}\tilde{C}\tilde{V} ,R \rangle, \quad
\langle V^{T}A_{i}V ,R \rangle =  \langle
\tilde{V}^{T}\tilde{A}_{i}\tilde{V},R \rangle = b_i, \forall i.
$$
As $\tilde{C},\tilde{A}_{i}$ and $\tilde{V}$ are block-diagonal matrices with appropriate sizes, the data matrices $\tilde{V}^{T}\tilde{C}\tilde{V}$ and $\tilde{V}^{T}\tilde{A}_{i}\tilde{V}$ are block-diagonal as well.
Since $R$ is also a block-diagonal matrix, this choice of $V$ implicitly  exploits symmetry of the original problem data.
The reduction in this case is a special case of a general reduction
technique  known as a projection-based method,
see~\cite{permenter2017reduction} and \Cref{rem:FRcompare} above.

We conclude that if \FR is implemented first, then for \SR to follow
it is crucial to find a suitable matrix $V$ to retain the
symmetry structure in the facially reduced problem.
Therefore, it is more convenient for our approach to apply symmetry reduction before
facial reduction and exploit the simple relation with the exposing vectors.
However, for some other symmetry  reduction methods it might be more appropriate to
do first \FR and then \SR, assuming that some symmetry is preserved after \FR and that the \SR and \FR procedures have comparable costs, see e.g., \cite{Lofberg}.

\subsection{Doubly nonnegative, \DNNp, program}
\label{sect:newDNN}
In this section, the theory above for \SDP is extended to doubly nonnegative program.
We show that if a maximal exposing vector $W$ (see \Cref{def:max}) of the \DNN program \cref{dnn_standard}  is given, then we can construct an exposing vector for the minimal face of the symmetry reduced \DNN program \cref{dnn_sys_reduced}.
This results in a strictly feasible symmetry reduced \DNN program \cref{dnn_sys_facial3}.

Note that in addition to positive definiteness, we need $X > 0$,
elementwise positivity, for strict feasibility to hold for the \DNN relaxation.
We denote the cone of nonnegative symmetric matrices of order $n$ by $\Nn$.
The following extends \cite[Proposition 2.3]{MR3928442}
for the intersection of faces to include exposing vectors.
\index{$Nn$, symmetric nonnegative}
\index{symmetric nonnegative, $\Nn$}
\begin{theorem}
\label{thm:expDNN}
Let $F_S \unlhd \Snp$, and let $F_N \unlhd \Nn$.
 Let $W_S\in \Snp, W_N\in \Nn$ be
exposing vectors for $F_S,F_N$, respectively. Then
\[
W_S+W_N \text{ is an exposing vector for } F_S\cap F_N \text{ for }
\DNN^n.
\]
\end{theorem}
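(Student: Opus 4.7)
The plan is a direct verification via two set inclusions, after first recording the relevant duality. An exposing vector for a face $F$ of a convex cone $K$ is any element $W$ of the dual cone $K^*$ such that $F=K\cap W^\perp$. Since $\Snp$ and $\Nn$ are both self-dual under the trace inner product, and since the sum of two closed convex cones with nonempty intersection in a finite-dimensional space is closed when one has interior (which $\Snpp$ gives here), we have
\[
(\DNN^n)^* \;=\; (\Snp\cap \Nn)^* \;=\; \Snp + \Nn.
\]
In particular, $W_S+W_N\in (\DNN^n)^*$, so the set $\{X\in \DNN^n:\langle W_S+W_N,X\rangle =0\}$ is by definition the face of $\DNN^n$ exposed by $W_S+W_N$. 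It remains to identify this set with $F_S\cap F_N$.

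First I would prove the inclusion $\{X\in\DNN^n:\langle W_S+W_N,X\rangle=0\}\subseteq F_S\cap F_N$. Take such an $X$. Since $X\in \Snp$ and $W_S\in \Snp$, one has $\langle W_S,X\rangle\ge 0$; since $X\in \Nn$ and $W_N\in \Nn$, one has $\langle W_N,X\rangle\ge 0$. Their sum vanishes, so both terms vanish separately. Because $W_S$ exposes $F_S$ within $\Snp$, the equations $X\in \Snp$ and $\langle W_S,X\rangle=0$ give $X\in F_S$; analogously $X\in F_N$. Hence $X\in F_S\cap F_N$.

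The reverse inclusion is essentially immediate. If $X\in F_S\cap F_N$ then $X\in F_S\subseteq\Snp$ with $\langle W_S,X\rangle=0$ and $X\in F_N\subseteq \Nn$ with $\langle W_N,X\rangle=0$. Therefore $X\in \Snp\cap \Nn=\DNN^n$ and $\langle W_S+W_N,X\rangle=0$, so $X$ lies in the face exposed by $W_S+W_N$.

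There is no genuine obstacle here beyond checking the duality identity $(\DNN^n)^*=\Snp+\Nn$, which is standard; the core of the argument is just the observation that two nonnegative numbers summing to zero must each be zero, which cleanly separates the $\Snp$-exposing behaviour from the $\Nn$-exposing behaviour. This cleanly extends the face-intersection statement cited from \cite{MR3928442} from faces to their exposing vectors.
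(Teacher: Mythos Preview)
Your proof is correct and follows essentially the same approach as the paper: establish $(\DNN^n)^* = \Snp + \Nn$, then use that $\langle W_S, X\rangle \ge 0$ and $\langle W_N, X\rangle \ge 0$ for $X \in \DNN^n$ forces both inner products to vanish when their sum does. The only quibble is your justification for closedness of $\Snp + \Nn$ (nonempty interior of one summand does not in general guarantee a closed sum of closed cones); the paper instead invokes polyhedrality of $\Nn$, which is the standard and correct route.
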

\begin{proof}
Note that since $\Nn$ is a polyhedral cone, and both $\Snp,\Nn$ are
self-dual, we get that the \textdef{dual cone} (nonnegative polar)
\[
(\DNN^n)^* = \Snp + \Nn.
\]
Note that, by abuse of notation,
\[
\langle W_S,F_S\rangle = 0, \quad  \langle W_N,F_N\rangle = 0.
\]
We can now take the sum of the exposing vectors and it is clearly an
exposing vector on the intersection of the faces.
\end{proof}
\begin{remark}
\label{rem:selfreprod}
\begin{enumerate}
\item
\label{item:remselfreprod}
\Cref{thm:expDNN}   holds for arbitrary closed convex cones.

\item
For our application, we note that the intersection $F_S\cap F_N$ is
characterized by the facial representation $X\in V\Ss^r_+ V^T$ and
$X_{ij}=0$ for appropriate indices $i,\!j$.
\FR on the \PSD cone allows one to obtain a new \SDP problem of lower
dimension, since  any face of the \PSD cone is  isomorphic to a smaller \PSD cone.
However, the \DNN cone  does not possess this property. Namely, a face of a \DNN cone is not necessarily
isomorphic to a smaller \DNN cone. However, our splitting still allows for a simplification based
on the Cartesian product of a \SDP cone and a $\Nn$ cone,
see~\cref{sdp_afterB}, the paragraph after~\cref{sdp_afterB},  and \Cref{rem:DNNPovhRendl}, below.

\end{enumerate}
\end{remark}

The \DNN program is defined as
	\index{\DNN optimal value, $p^*_{\DNN}$}
	\index{$p^*_{\DNN}$, \DNN optimal value}
\begin{equation}
\label{dnn_standard}
(\textdef{$\PP_\DNN$}) \qquad p^*_{\DNN} :=	\min \{ \langle C,X \rangle \;|\; \textdef{$\A(X)$}=b, \,  X \in \DNN^n\}.
\end{equation}
The symmetry reduced formulation of the \DNN program \cref{dnn_standard} is
\begin{equation}
\label{dnn_sys_reduced}
p^*_{\DNN}=
\min \{ c^{T}x \;|\; \textdef{$Ax=b$},
\, x \geq 0,\  \BBts_{k}(x) \succeq 0, \,  k = 1,\ldots,t \},
\end{equation}
see \cref{eq:Btildestarx}  for the definition of $\BBts_{k}(x)$.
 Recall that the symmetry reduced formulation of the \SDP program \cref{sdp_standard} is \cref{sdp_sys_reduced}.
The ambient  cone of the symmetry reduced program
\cref{dnn_sys_reduced} is the
Cartesian product of cones $(\R^{d}_{+},\Ss_+^{n_1},\ldots,\Ss_+^{n_t})$.

Let $W \in \DNN^{*}$ be an  exposing vector of \cref{dnn_standard}.  Then $W = W_{S} + W_{N}$ for some $W_{S} \in \Snp$ and $W_{N} \in \Nn$. The exposing vector $W \in \DNN^{*}$  satisfies $\langle W , X \rangle = 0$ for every feasible $X$ of \cref{dnn_standard}. Since it also holds that $\langle W_{S}, X\rangle \geq 0$ and $\langle W_{N} , X \rangle \geq 0$, we have $$\langle W_{S}, X\rangle = \langle W_{N} , X \rangle = 0,$$ for every feasible $X$ of \cref{dnn_standard}.

We are going to construct an exposing vector for the symmetry reduced
program \cref{dnn_sys_reduced} by using $W$. Here the exposing vectors
$(\widetilde{W_{n_{1}}},\ldots,\widetilde{W_{n_{t}}})$ for the
semidefinite cones $(\Ss^{n_1},\ldots,\Ss^{n_t})$ can be derived in the same way as before.
Therefore we only have to find an exposing vector for the nonnegative cone $\R^{d}_+$.
Let $x$ be feasible for  \cref{dnn_sys_reduced}.
Then $X = \BB^{*}(x)$ is feasible for \cref{dnn_standard}. We have
$$\langle W_{N} , X \rangle = \langle W_{N} , \BB^{*}(x) \rangle = \langle \BB(W_{N}), x \rangle = 0.$$
Define $w := \BB(W_{N})$. Since $W_{N}$ is nonnegative and $\left(\BB(W_{N})\right)_{i} = \langle B_{i}, W_{N} \rangle$ for some zero-one matrix $B_{i}$, the vector $w$ is nonnegative. Then $\langle w,x\rangle = 0$ implies that $w$ is an exposing vector for the cone $\R^{d}_+$ of  \cref{dnn_sys_reduced}.

Thus facial reduction for the nonnegative cone $\R_{+}^{d}$ simply
removes the entries $x_{i}$ associated to positive entries $w_{i} > 0$
from the program. Let $\bar{x}$ be the vector obtained by removing
these entries from $x$. Define the new data matrices
$\bar{c},\bar{A}$ correspondingly. The facial reduction for the
semidefinite cones are handled in the same way as before. This yields the following facially reduced formulation of
\cref{dnn_sys_reduced}
	\index{\FRp,\DNN optimal value, $p^*_{\FRp,\DNN}$}
	\index{$p^*_{\FRp,\DNN}$, \FRp,\DNN optimal value}
\begin{align}
\label{dnn_sys_facial3}
\begin{split}
p^*_{\DNN} = & \min \{ \bar{c}^{T}\bar{x} \;|\; \textdef{$\bar{A}\bar{x}=b$}, \, \bar{x} \geq 0, \,
  \BBts_{k}(\bar{x}) = \tilde{V}_{k}\tilde{R}_{k}\tilde{V}_{k}^{T}, \, \tilde{R}_{k} \succeq 0, \, \forall k = 1,\ldots,t\}.
\end{split}
\end{align}

 We show below that if $W$ is a maximal exposing vector of \cref{dnn_standard}, see \Cref{def:max},
then the facially and symmetry reduced  program \cref{dnn_sys_facial3} is strictly feasible.
In the sequel,  we denote by  $\text{supp}(M)$ the support of the matrix $M$. 

\begin{definition}\label{def:max}
An exposing vector $W$ of the $\DNN$ program \cref{dnn_standard} is maximal if  it has a
	decomposition $W = W_{S} + W_{N}$ for some $W_{S} \in \Snp$ and $W_{N} \in \Nn$ satisfying:
\begin{enumerate}[label=(\roman*)]
	\item $\rank( W_{S})$ is maximal;
	\item
	\label{item:posentries}
	the number of positive entries in $W_{N}$ is maximal,
	i.e., $\text{supp}(W_{N}^\prime) \subseteq \text{supp}(W_{N})$
	for any other exposing vector $W_{N}$.
\end{enumerate}
\end{definition}
 \label{pg:maximalityass}
Note that if $W_{N},W_{N}^\prime \in \Nn$ are exposing vectors for a \DNN program, then $W_{N} + W_{N}^\prime \in
\Nn$ is also an exposing vector.
Therefore the support of $W_N$ in the decomposition of a maximal exposing vector $W$ is unique in \Cref{def:max}.
\label{pg:maximality}

\begin{theorem}\label{thm:DNN_FR}
Assume that $W = W_{S} + W_{N}$, where $W_{S} \in \Snp$, $W_{N} \in \Nn$, is a maximal exposing vector of the $\DNN$ program \cref{dnn_standard}. Then  $W_{S}$ and $W_{N}$ are exposing vectors for the minimal face of the symmetry reduced program \cref{dnn_sys_reduced}, or equivalently, the facially and symmetry reduced program \eqref{dnn_sys_facial3} is strictly feasible.
\end{theorem}

\begin{proof}
Assume, for the sake of contradiction, that \cref{dnn_sys_facial3} is not strictly feasible.
The existence of a feasible solution for  \cref{dnn_sys_facial3} such that $\tilde{R}_{k} \succ 0$ for $k = 1,\ldots,t$ can be derived in the same way as before, see \Cref{thm:exposeG2}.
Therefore,  we consider here only the case that there does not exist feasible $\bar{x}$ for  \cref{dnn_sys_facial3} that is strictly positive.
Then there exists an exposing vector $w^\prime \in
	\R_{+}^{d}$ for \cref{dnn_sys_reduced} such that $\text{supp}(w)
	\subsetneq \text{supp}(w^\prime)$. Let $W_{N}^\prime :=
	\BB^{*}(w^\prime) \in \Nn$. Then $\text{supp}(W_{N}) \subsetneq
	\text{supp}(W_{N}^\prime)$. Let $X \in \DNN$ be feasible for \cref{dnn_standard}. Then $\RRG(X) = \BB^{*}(x) \in \DNN$ for some $x$ feasible for \cref{dnn_sys_reduced}, and thus
	$$\langle W_{N}^\prime ,\RRG(X)  \rangle = \langle w^\prime,x \rangle = 0.$$
	But $\text{supp}(X) \subseteq \text{supp}(\RRG(X))$, this means that
	$\langle W_{N}^\prime , X \rangle = 0$. Thus $W_{N}^\prime$ is an exposing vector
	\label{pg:forrevision}
	for \cref{dnn_standard} such that $\text{supp}(W_{N}) \subsetneq
	\text{supp}(W_{N}^\prime)$. This contradicts the maximality of $W$. Thus
	the program \cref{dnn_sys_facial3} is strictly feasible. Note
	that the fact that we could move the nonnegativity to the reduced
	variable $x$ was essential for obtaining the Slater condition.
\end{proof}

\begin{remark}\label{rgPrt ofProof}
One can also prove  \Cref{thm:DNN_FR} by exploiting the following  properties of $\RRG(X)$, the Reynolds operator of ${\GG}$,  see \cref{def:rey}.
For all feasible $X$, it holds that
		\[
		\rank \left(\RRG(X)\right) \ge  \rank(X),~   \text{supp} (\RRG(X)) \supseteq \text{supp}(X).
		\]
	\end{remark}

\subsubsection{Facial reduction from the ground set for \DNNp}
\label{sect:leventext}
Our applications involve quadratic models of hard combinatorial
problems. We now see that the view of strict feasibility and \FR
in~\cite[Theorems 3.1, 3.2]{Tun:01} can be easily extended from \SDP to
\DNNp.

We follow the notation in~\cite{Tun:01} and define the feasible set
or \textdef{ground set} of a quadratically constrained program as:
$$\QQ := \left \{ x \in \mathbb{R}^{n} \;|\; \mathcal{A}\left(\begin{bmatrix}
1 & x^{T} \\
x & xx^{T}
\end{bmatrix}\right) = 0, \, x\geq 0 \right \},$$
where $\A$ is a linear transformation.
The relaxation, lifting, is then given by
$$
\hat{\QQ} := \left\{ \begin{bmatrix}
1 & x^{T} \\
x & X
\end{bmatrix} \in \DNN^{n+1} \;|\; \mathcal{A}\left(\begin{bmatrix}
1 & x^{T} \\
x & X
\end{bmatrix}\right) = 0
\right\}.$$
Let the \emph{gangster set} for  $\QQ$ be defined as
\[
\GG_\QQ = \left \{(i,j) : x_ix_j = 0, \, \forall x \in \QQ \right \}
\]
with complement $\GG_\QQ^c$.
Let the {gangster set} for  $\hat  \QQ$  be defined as
\[ \label{defGang}
\GG_{\hat \QQ} = \left \{ (i,j) : X_{ij}= 0, \, \mbox{for all} \,  \begin{bmatrix}
1 & x^{T} \\
x & X
\end{bmatrix}  \in {\hat \QQ } \right \}.
\]
Note that here the gangster sets are equal
 $\GG_{\QQ} =\GG_{\hat \QQ}$, with appropriate indices.
However, for a general \DNNp, we are
not given the ground set and the gangster set is defined for the lifted
problem only.

In what follows we use \textdef{$e_k$ or $e$} when the meaning is clear, to denote the vector of all ones  of order $k$.

\begin{theorem}[Slater]
\label{thm:slaterlevent}
Suppose that $\conv(\QQ)$ is full dimensional and that $\GG_\QQ =
\emptyset$.
Then the Slater condition holds for $\hat{\QQ}$.
\end{theorem}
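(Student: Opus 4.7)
The plan is to produce an explicit $\bar M \in \hat\QQ$ that is simultaneously positive definite and strictly entrywise positive; this is precisely the Slater condition, since $\relint(\DNN^{n+1})$ consists of matrices in $\Snpp$ that are also strictly positive in every entry. The natural building blocks are lifted feasible vectors: for each $x\in\QQ$, the rank-one matrix
\[
M(x) \;:=\; \begin{bmatrix} 1 \\ x \end{bmatrix}\begin{bmatrix} 1 \\ x \end{bmatrix}^{T} \;=\; \begin{bmatrix} 1 & x^{T} \\ x & xx^{T} \end{bmatrix}
\]
is PSD, entrywise nonnegative, and satisfies $\A(M(x)) = 0$ by definition of $\QQ$, so $M(x)\in\hat\QQ$. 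Since $\hat\QQ$ is convex, every convex combination of such $M(x)$ also belongs to $\hat\QQ$, so it suffices to assemble one combination that inherits positive definiteness from some contributors and entrywise strict positivity from others.

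For positive definiteness I would invoke the full-dimensionality of $\conv(\QQ)$ to pick $n+1$ affinely independent points $x^{(0)},\ldots,x^{(n)}\in\QQ$. The augmented vectors $\begin{bmatrix} 1 \\ x^{(i)}\end{bmatrix}\in\R^{n+1}$ are then linearly independent, so any strictly positive combination $\sum_{i} \alpha_i M(x^{(i)})$ is a sum of rank-one PSD matrices whose generating vectors span $\R^{n+1}$, making it positive definite. For entrywise strict positivity I would exploit $\GG_\QQ=\emptyset$: for every pair $(j,k)\in\{1,\ldots,n\}^{2}$ there exists $y^{(j,k)}\in\QQ$ with $y^{(j,k)}_j\, y^{(j,k)}_k>0$. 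Combining the two batches,
\[
\bar M \;:=\; \sum_{i=0}^{n} \alpha_i\, M(x^{(i)}) \;+\; \sum_{(j,k)} \beta_{jk}\, M(y^{(j,k)}),
\]
with all coefficients strictly positive and summing to one, yields $\bar M\in\hat\QQ$; the first sum alone already forces $\bar M\succ 0$; each entry $\bar M_{jk}$ with $1\le j,k\le n$ receives the strictly positive contribution $\beta_{jk}\, y^{(j,k)}_j\, y^{(j,k)}_k$; each entry $\bar M_{0,j}$ (and by symmetry $\bar M_{j,0}$) is forced positive by the diagonal choice $(j,k)=(j,j)$, since $(j,j)\notin\GG_\QQ$ yields $y^{(j,j)}_j>0$; and $\bar M_{0,0}=1$. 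Hence $\bar M$ witnesses Slater for $\hat\QQ$.

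There is no deep obstacle; the only care required is organizational, namely ensuring that a single convex combination simultaneously delivers positive definiteness (from the affinely independent batch) and entrywise strict positivity (from the $\GG_\QQ=\emptyset$ batch), while remaining a convex combination of lifted points from $\QQ$ and hence automatically feasible for $\hat\QQ$. This parallels the pure \SDP argument in~\cite[Theorem 3.2]{Tun:01}, now adapted to the \DNN setting via the additive decomposition of exposing vectors established in~\Cref{thm:expDNN}.
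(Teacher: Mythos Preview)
Your proposal is correct and follows essentially the same approach as the paper: both select $n+1$ affinely independent points of $\QQ$ to guarantee positive definiteness, and a family of points $y^{(j,k)}\in\QQ$ with $y^{(j,k)}_j y^{(j,k)}_k>0$ (coming from $\GG_\QQ=\emptyset$) to guarantee entrywise strict positivity, then take a positive combination of the lifted rank-one matrices. Your use of a convex combination (coefficients summing to one) is in fact slightly tidier than the paper's $W=VV^T$, since it automatically fixes the top-left entry at $1$ without a separate normalization; one small inaccuracy is that the pure \SDP precedent you cite is~\cite[Theorem~3.1]{Tun:01}, not Theorem~3.2, and the appeal to \Cref{thm:expDNN} is unnecessary here since no exposing vectors are used.
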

\begin{proof}
By the assumption, we can choose the finite set of vectors
\begin{equation}
\label{eq:pospts}
\left\{v^{ij} \in \QQ \,|\,  v^{i,j}_iv^{i,j}_j > 0, \, \text{ for each }
(i,j)\in \{1,\ldots,n\} \times \{1,\ldots,n\} \right\}.
\end{equation}
As in~\cite{Tun:01}, we choose an affine linear independent set
$\{x_i\}_{i=1}^{n+1} \subseteq \QQ$,
and form the matrix by adding ones and the $v^{i,j}$ defined in~\cref{eq:pospts}:
\[
V :=
\begin{bmatrix}  {e_{n+1}^T} & {e_{n^2}^T} \cr
\begin{bmatrix}  x^1, \ldots, x^{n+1} \end{bmatrix}&
\begin{bmatrix}  v^{1,1}, v^{1,2}, \ldots, v^{n,n} \end{bmatrix}
\end{bmatrix}
\]
We lift and get the Slater point $W:= VV^T\in \hat \QQ, W\succ 0, W>0$.
\end{proof}

We now extend this to obtain \FRp. We use our exposing vector viewpoint
rather than the internal view in~\cite[Theorem 3.2]{Tun:01}.
We note that we can not move here the nonnegativity constraints onto $R$ as
is done for our applications after \SRp. Moreover, though
the Slater condition holds for the \FR feasible set
in~\cref{eq:genSlater}, it is not necessarily true that the
\textdef{Mangasarian-Fromovitz constraint qualification} holds,
since some of the
linear equality constraints typically become redundant after \FRp.
We can however discard redundant equality constraints.
\begin{theorem}[facial reduction]
\label{thm:FRalaTUNforDNN}
Suppose that the affine hull, $\aff(\conv(\QQ))= \LL$ and $\dim(\LL)=d$.
Then there exist $A$ and $b$ with $A$ full row rank such that
\[
\LL = \{x\in \Rn \,|\, Ax=b\}.
\]
Let $U = \begin{bmatrix}-b^T \cr A^T\end{bmatrix}$ and $V$ be full column
rank with $\range(V) = \nul(U)$.
Then there exists a Slater point $\hat R$
for the \FRp, \DNN feasible set
\begin{equation}
\label{eq:genSlater}
\hat \QQ_R  =
       \left\{ R \in \Ss^{d+1} \,|\,
      R\succeq 0, \,(VRV^T)_{\GG_\QQ^c} \geq 0,\,
               (VRV^T)_{\GG_\QQ} = 0,\,
               \A\left(VRV^T\right) = 0
 \right\},
\end{equation}
where  $(VRV^T)_S$  is  the vector with indices chosen from the set $S$,
and
\begin{equation}
      \hat R\succ 0, \,(V\hat RV^T)_{\GG_\QQ^c} > 0,\,
               (V\hat RV^T)_{\GG_\QQ} = 0,\,
               \A\left(V\hat RV^T\right) = 0.
\end{equation}
\end{theorem}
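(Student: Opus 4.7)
My plan is to extend the Slater-point construction of~\Cref{thm:slaterlevent} to the facially reduced setting. The key idea is to build a doubly nonnegative matrix $W \in \DNN^{n+1}$ whose range is exactly $\range(V)$ and whose support is exactly the complement of the gangster set, and then descend through the correspondence $W = V\hat R V^T$ to obtain a Slater point $\hat R$ for~\eqref{eq:genSlater}. This mirrors the exposing-vector viewpoint used in~\Cref{thm:expDNN}, where a \DNN exposing vector splits as $W_S+W_N$ on the self-dual pair $(\Snp,\Nn)$.

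First, since $\dim(\LL)=d$, I would choose an affinely independent set $\{x^1,\ldots,x^{d+1}\} \subseteq \QQ$. Then for each pair $(i,j) \in \GG_\QQ^c$ I would select a point $v^{i,j} \in \QQ$ with $v^{i,j}_i v^{i,j}_j > 0$, which exists by the definition of the gangster complement, exactly as in~\eqref{eq:pospts}. I would then set
\begin{equation*}
W \;:=\; \sum_{k=1}^{d+1} \bmat{1 \\ x^k}\bmat{1 \\ x^k}^T \;+\; \sum_{(i,j)\in \GG_\QQ^c}\bmat{1 \\ v^{i,j}}\bmat{1 \\ v^{i,j}}^T \;\in\; \DNN^{n+1},
\end{equation*}
which is a direct analogue of $VV^T$ from the proof of~\Cref{thm:slaterlevent} but now encodes the gangster structure.

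Next I would verify four properties. Linearity of $\A$ and feasibility of each lifted rank-one term give $\A(W)=0$. For $(i,j)\in \GG_\QQ$, every $x\in\QQ$ satisfies $x_i x_j=0$, so the $(i,j)$-entry of each rank-one summand vanishes and $W_{ij}=0$. For $(i,j)\in \GG_\QQ^c$, the term $\bmat{1\\ v^{i,j}}\bmat{1\\ v^{i,j}}^T$ alone contributes a strictly positive value, so $W_{ij}>0$. For the range: every $x\in\QQ$ satisfies $Ax=b$, hence $U^T\bmat{1 \\ x}=-b+Ax=0$, so $\bmat{1 \\ x} \in \nul(U^T)=\range(V)$; the ambient null space has dimension $n+1-\rank(A)=d+1$, and the $d+1$ homogenized lifts $\bmat{1 \\ x^k}$ are linearly independent by affine independence of the $x^k$, so they already span $\range(V)$. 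Adding further rank-one terms lying in $\range(V)$ preserves $\range(W)=\range(V)$, hence $\rank W = d+1$. Finally, setting $\hat R := (V^TV)^{-1}V^T W V (V^TV)^{-1}$ yields $V\hat R V^T = W$ by construction; since $V$ has full column rank $d+1$ and $\rank W = d+1$, we get $\hat R \succ 0$, and the remaining three Slater conditions transfer directly from the corresponding properties of $W$.

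The main obstacle I anticipate is careful bookkeeping between the ground-set indices $\{1,\ldots,n\}$, on which the gangster set is defined, and the lifted matrix indices $\{0,1,\ldots,n\}$. In particular, the top row/column (index $0$) of $W$ must be shown to align consistently with the extension of $\GG_\QQ$ to the lifted matrix: if $x_i=0$ for all $x\in\QQ$, then $W_{0,i}=0$ is forced and must match a gangster equality, whereas if $(i,i)\notin\GG_\QQ$ then the chosen $v^{i,i}$ automatically produces $W_{0,i}>0$. As the paragraph preceding the theorem already flags, after \FR some of the constraints $\A(V\hat R V^T)=0$ become linearly dependent; these redundant rows are discarded to restore full row rank, but this cleanup is orthogonal to the Slater argument above and does not affect the construction of $\hat R$.
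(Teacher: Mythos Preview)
Your proposal is correct and follows essentially the same approach as the paper: the paper's own proof is a two-line sketch saying ``as for~\Cref{thm:slaterlevent} after observing that $UU^T$ is an exposing vector, and restricting the selection in~\eqref{eq:pospts} to the complement $\GG_\QQ^c$,'' which is precisely the construction you carry out in detail. Your explicit recovery of $\hat R$ via $(V^TV)^{-1}V^TWV(V^TV)^{-1}$ and your verification that $\range(W)=\range(V)$ (using affine independence of the $x^k$ and $U^T\bmat{1\\x}=0$) are the natural fleshing-out of that sketch, and your remark on the index-$0$ row/column bookkeeping is a fair caveat that the paper leaves implicit.
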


\begin{proof}
The proof is as for~\Cref{thm:slaterlevent} after
observing that $UU^T$ is an exposing vector. More precisely, from
\begin{equation}
\label{eq:liftedgrset}
Ax-b =0
\iff
\begin{bmatrix} 1 \cr x \end{bmatrix}^T
\begin{bmatrix} -b^T \cr A^T \end{bmatrix} =0
\iff
\begin{bmatrix} 1 \cr x \end{bmatrix}
\begin{bmatrix} 1 \cr x \end{bmatrix}^T
\begin{bmatrix} -b^T \cr A^T \end{bmatrix}
\begin{bmatrix} -b^T \cr A^T \end{bmatrix}^T
=0,
\end{equation}
we see that $YUU^T=0$ for all lifted $Y$, and therefore also for the minimal
face.
Thus $UU^T$ is an exposing vector. The result follows after
restricting the
selection in~\cref{eq:pospts} to the complement $\GG_\QQ^c$.
\end{proof}

{\subsection{Singularity degree}
\label{sect:singDeg}

The \emph{singularity degree} defined for the
\emph{semidefinite} feasibility problem
$\PP_F$~\cref{eq:feasprob}, and denoted by $\sd(\PP_F)$,
is the minimum number of steps with a nonzero exposing vector,
for the \FR algorithm to
terminate with the minimal face. For $\PP_\SDP$ this means we
terminate with a strictly feasible problem.
Singularity degree was introduced for
$\PP_\SDP$ in~\cite{S98lmi} to show that \SDP
feasibility problems always admit a
\emph{H\"older error bound},\footnote{\label{ftnt:sdSturm}Our definition of singularity degree does not coincide with the definition from~\cite{S98lmi} when $\PP_F=\{0\}$.
In this case our definition gives $\sd(\PP_F)\geq 1$, while Sturm
defines  $\sd(\PP_F)=0$. See also~\cite{MR3845279} for discussions
on the definition; and
see~\cite{SWW:17} for lower bound results.}
more precisely, let $d=\sd(\PP_F)$, $\LL = \{X \,|\, \A(X) = b \}$, $U\subset \Sn$ be compact.
Let $\dist$ denote the norm-distance to a set. Then it is shown
in~\cite{S98lmi} that there exists $c>0$ such that
\index{$\dist$, norm-distance to a set}
\index{norm-distance to a set, $\dist$}
\[
\dist(X,\FF_X) \leq c\left(\dist^{2^{-d}}(X,\Snp)+
                         \dist^{2^{-d}}(X,\LL) \right), \, \forall X\in U.
\]
Remarkably, the exponent $2^{-d}$ is independent of $m,n$ and
the rank of the matrices in $\FF_X$.
It strongly indicates the importance of \FR for \SDPp,
especially when obtaining approximate solutions with splitting type methods.
This is illustrated by our numerical results,
see~\Cref{table:QueenRez} below,  where lower bounds obtained by \ADMM are dramatically
better than those for \IPMp.

In this section, we show that the singularity degree of a symmetry
reduced program is equal to the singularity degree of the original problem, see~\Cref{thm:sdleq}. Thus, we provide a
heuristic indication that this error measure does not grow when applying \SRp.
Of course, after completing \FRp, the singularity degree is optimal, $0$.

\index{singularity degree, $\sd(\PP_F)$}
\index{$\sd(\PP_F)$, singularity degree}

The facial reduction algorithm applied to the semidefinite program \cref{sdp_standard} is described as follows.
At the $k$-th step, \FR finds an exposing vector of the feasible region of the reduced \SDP of \cref{sdp_standard}
\begin{equation}\label{sd_sdp}
\{ R \in \mathcal{S}^{r_{k}}_{+} \;|\;  \mathcal{A}_{V}(R) = b_\II\},
\text{ with }
\mathcal{A}_{V}(R) = \left(\langle V^{T}A_{i}V ,R \rangle \right)_{i\in
\II} \in \mathbb{R}^{|\II|}, \, \II\subseteq \{1,\ldots,m\}.
\end{equation}
Here $V$ is a given matrix updated after each \FR step.
In the first step,  $V$ is the identity matrix and
 \cref{sd_sdp} is the feasible region $\FF_X$ of the original problem \cref{sdp_standard}.
An exposing vector is then obtained by solving the following
\textdef{auxiliary system} for $y$:
\begin{equation}\label{sdp_aux}
0 \neq \mathcal{A}_{V}^{*}(y) \succeq 0 \text{ and } b^{T}y \leq 0.
\end{equation}
If $y$ exists, then $W = A_{V}^{*}(y) \in
\mathbb{R}^{r_{k} \times r_{k}}$ is an exposing vector of the feasible
region \cref{sd_sdp}. We then do as follows:
\begin{equation}
\begin{array}{rl}
\label{eq:Vprime}
(i) &  \text{compute $V^\prime \in \mathbb{R}^{r_{k} \times r_{k+1}}$, full rank, $\range(V^\prime) =  \nul (W);$} \\
(ii) & \text{set  $V\leftarrow  VV^\prime\in \mathbb{R}^{n \times r_{k+1}}$}; \\
(iii) & \text{repeat from \cref{sdp_aux}}.
\end{array}
\end{equation}

At the $k$-th step, we have computed a vector $y$ and a matrix $V^\prime$
that determines the facially reduced formulation at the next step.
Choosing exposing vectors with maximum possible rank leads to the fewest
iterations,  see e.g.,~\cite{SWW:17,MR3845279}.
For completeness, we now show
that the number of iterations in the facial reduction algorithm only depends
on the choice of $y$ and not on the choice of $V^\prime$.
\begin{lemma}\label{sd_inv}
The total number of facial reduction steps does not depend on the choice
of $V^\prime$ and  $V$ in \cref{eq:Vprime}.
\end{lemma}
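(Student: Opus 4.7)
My plan is to exploit the fact that any two admissible choices of $V'$ at a given iteration differ by an invertible right factor, and that this difference propagates through the entire algorithm as a harmless invertible congruence on the reduced SDP.

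First I would fix a step of the algorithm at which the current reduction matrix is $V$ and the exposing vector (computed from some $y$ solving \eqref{sdp_aux}) is $W$. Suppose $V'_1, V'_2 \in \mathbb{R}^{r_k \times r_{k+1}}$ are two full column rank choices with $\range(V'_1) = \range(V'_2) = \nul(W)$. Since they share the same range and have the same full column rank, there exists an invertible $S \in \mathbb{R}^{r_{k+1} \times r_{k+1}}$ with $V'_2 = V'_1 S$. Consequently the updated reduction matrices differ by the same $S$, namely $V_2 := VV'_2 = (VV'_1)S =: V_1 S$.

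Next I would show the auxiliary system \eqref{sdp_aux} at the next step is invariant under this change up to congruence. A direct calculation gives
\[
\mathcal{A}_{V_2}^{*}(y) \;=\; \sum_i y_i V_2^{T} A_i V_2
\;=\; S^{T} \!\left(\sum_i y_i V_1^{T} A_i V_1\right)\! S
\;=\; S^{T}\mathcal{A}_{V_1}^{*}(y)\, S.
\]
Because $S$ is invertible, $\mathcal{A}_{V_2}^{*}(y) \succeq 0$ iff $\mathcal{A}_{V_1}^{*}(y) \succeq 0$, the two exposing vectors have the same rank, and the linear condition $b^{T}y \leq 0$ is unaffected since it does not involve $V$. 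Hence the two auxiliary systems have the same solution set $\{y\}$, and for any chosen $y$ the resulting exposing vectors $W_1$ and $W_2 = S^{T}W_1 S$ satisfy $\nul(W_2) = S^{-1}\nul(W_1)$. Thus the admissible choices of the next reducer $V''$ again differ by an invertible congruence of the same flavor, so the congruence equivalence propagates inductively.

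Finally I would close the induction: at every stage the algorithm under the two choices maintains reduction matrices $V_1^{(k)}$ and $V_2^{(k)} = V_1^{(k)} S_k$ for some invertible $S_k$, and the auxiliary system \eqref{sdp_aux} has a solution in one branch iff it does in the other. Therefore the algorithm terminates at exactly the same iteration count in both branches. The main (minor) obstacle is being careful that an $S$ chosen at one step really does induce an invertible $S$ at the next — but this follows immediately from the preservation of rank and range under the congruence $W \mapsto S^{T}WS$, so no real difficulty arises.
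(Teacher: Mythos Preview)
Your proposal is correct and follows essentially the same approach as the paper: both arguments observe that two admissible choices of $V'$ differ by an invertible right factor $S$, that this induces the congruence $\mathcal{A}_{VS}^{*}(y) = S^{T}\mathcal{A}_{V}^{*}(y)S$ on the auxiliary system (preserving solvability, rank, and null space up to $S^{-1}$), and that this equivalence propagates inductively through all subsequent steps. Your write-up is in fact somewhat more explicit about the induction than the paper's terse version, but the key idea is identical.
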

\begin{proof}
Assume that $y$ satisfies the auxiliary system \cref{sdp_aux} for the feasible region \cref{sd_sdp}. If we replace $V \in \mathbb{R}^{n \times r_{k}}$ in \cref{sd_sdp}, with $VS \in
\mathbb{R}^{n \times r_{k}}$, for some invertible matrix $S \in
\mathbb{R}^{r_{k} \times r_{k}}$,
then the same vector $y$ satisfies the new auxiliary system, as $b^{T}y \leq 0$ and
$$W_{S} := \mathcal{A}_{VS}^{*}(y) = \sum_{i=1}^{m} (S^{T}V^{T}A_{i}VS)y_{i} = S^{T}\mathcal{A}_{V}^{*}(y)S = S^{T}WS \succeq 0.$$
Since $S$ is invertible, it holds that $W_{S} \neq 0$ and $\rank (W_{S}) = \rank(W)$. Thus, we obtain the same reduction in the problem size at the $k$-th step.

As $\Null(W_{S}) = S^{-1}\Null(W)$, we have $S^{-1}V^\prime \in \Null(W_{S})$, where $V'$ satisfies $\range(V') = \Null(W)$ as in \cref{eq:Vprime}. For any invertible matrix $T\in \mathbb{R}^{r_{(k+1)} \times r_{(k+1)}}$, we have that $V_{S}^\prime = S^{-1}V^\prime T \in \Null(W_{S})$. Thus, in the second step of \cref{eq:Vprime}, we have
$$
VS \leftarrow  (VS)V_{S}^\prime = VSS^{-1}V^\prime T = (VV^\prime) T.
$$
This means we can can repeat our argument to show the reduction at each subsequent step is the same.
\end{proof}

Now we describe the facial reduction algorithm applied to the symmetry reduced program \cref{sdp_sys_reduced}. The facial reduction algorithm  at the $k$-th step considers  the feasible region in variables $(x,\tilde{R}_{1},\ldots,\tilde{R}_{t})$ determined by
\begin{equation}\label{sd_sym}
\begin{array}{rcl}
Ax &= &b\\
\blkdiag \left( \BBts(x) \right) &= &  \left(\tilde{V}_{1}\tilde{R}_{1}\tilde{V}_{1}^{T},\ldots,\tilde{V}_{t}\tilde{R}_{t}\tilde{V}_{t}^{T}\right)\\
\tilde{R}_{k} &\in& \mathcal{S}_{+}^{\tilde{r}_{k}},
\end{array}
\end{equation}
for some $\tilde{V} =  \Blkdiag (\tilde{V}_{1},\ldots,\tilde{V}_{t})$
with $\tilde{V}_{i} \in \mathbb{R}^{n_{i} \times \tilde{r}_{k}}$, see
also~\cref{sdp_sys_facial}.
Here $\blkdiag = \Blkdiag^*$.
In the first step, $\tilde{V}$ is the identity matrix and we obtain the feasible region $\FF_{x}$ of the symmetry reduced program \cref{sdp_sys_reduced}.

The auxiliary system for \cref{sd_sym} is to find $(y,\widetilde{W}_{1},\ldots,\widetilde{W}_{t})$ such that
\begin{equation}\label{sys_aux}
\begin{array}{rll}
A^{T}y &=& \BBt(\Blkdiag(\widetilde{W}_{1},\ldots,\widetilde{W}_{t}))\\[1ex]
0 &\neq& (\tilde{V}_{1}^{T}\widetilde{W}_{1}\tilde{V}_{1},\ldots, \tilde{V}_{t}^{T}\widetilde{W}_{t}\tilde{V}_{t} )\in (\mathcal{S}^{\tilde{r}_{1}}_+,\ldots,\mathcal{S}^{\tilde{r}_{t}}_+) \text{ and } b^{T}y \leq 0.
\end{array}
\end{equation}
Then $\Blkdiag (\tilde{V}_{1}^{T}\widetilde{W}_{1}\tilde{V}_{1},\ldots, \tilde{V}_{t}^{T}\widetilde{W}_{t}\tilde{V}_{t} )$ is an  exposing vector of the symmetry reduced problem.
 Let $\tilde{V}_{i}^\prime$ be the matrix whose independent columns span
$\Null(\tilde{V}_{i}^{T}\widetilde{W}_{i}\tilde{V}_{i})$.  In the facial reduction algorithm, we replace the
matrix $\tilde{V}_{i}$ by $\tilde{V}_{i}\tilde{V}_{i}^\prime$. Then we repeat the algorithm until the auxiliary system \cref{sys_aux} has no solution.

Our main result in this section is that the singularity degree of the
symmetry reduced \SDP~\cref{sdp_sys_reduced}
is equal to the singularity degree of the original \SDP \cref{sdp_standard}.}

\begin{theorem}
\label{thm:sdleq}
$\sd(\PP_{F_x}) = \sd(\PP_F).$
\end{theorem}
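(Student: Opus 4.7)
The plan is to prove the inequality by induction on $d := \sd(\PP_F)$, mirroring the facial reduction iteration on the original program by a corresponding iteration on the symmetry reduced program, where the latter's exposing vectors are produced from the former's via Reynolds averaging followed by conjugation by $Q$.

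For the base case $d = 0$, the original program is strictly feasible, so by the corollary immediately following~\Cref{thm:rank}, the symmetry reduced program is strictly feasible as well, giving $\sd(\PP_{F_x}) = 0$.

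For the inductive step, suppose $d \geq 1$ and fix an optimal length-$d$ facial reduction chain $W_1, \ldots, W_d$ for $\PP_F$. At the first step, by~\Cref{lemma:exposing_vectorinAG} we may replace $W_1$ with its Reynolds average $W_1^{\GG} \in A_{\GG}$, whose rank is at least $\rank(W_1)$, so the chain length does not increase. By~\Cref{prop:exposeG1}, the block-diagonal matrix $\widetilde{W}_1 := Q^T W_1^{\GG} Q$ exposes a face of $\Snp$ containing $\SSx$; this serves as the first exposing vector for a facial reduction chain on $\PP_{F_x}$. Performing the associated \FR step on both sides, I would then verify that (i) because $W_1^{\GG} \in A_{\GG}$ commutes with every $P \in \GG$, its null space is $\GG$-invariant, and hence a choice of $V_1$ with $\range(V_1) = \Null(W_1^{\GG})$ yields an \FR-reduced original problem that is still $\GG$-invariant, with the \emph{same} group $\GG$ acting on the smaller space (via the induced representation); and (ii) performing the \FR step on $\PP_{F_x}$ using $\widetilde{W}_1$ — equivalent to using the block-diagonal $\tilde{V}_1 = Q^T V_1$ — yields exactly the symmetry reduced program of the reduced original. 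Granting this compatibility, the inductive hypothesis applied to the reduced pair (which has $\sd \leq d-1$) gives $\sd(\PP'_{F_x}) \leq d-1$, and combining with the one \FR step just performed yields $\sd(\PP_{F_x}) \leq d$.

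The main obstacle is the commutativity claim in (ii): that one step of \FR on $\PP_F$ using a vector in $A_{\GG}$ followed by \SR produces the same program as one step of \FR on $\PP_{F_x}$ using the $Q$-conjugated exposing vector. This rests on two pieces already in the paper: the relation $\widetilde{W} = Q^T W Q$ is block-diagonal whenever $W \in A_{\GG}$ (used in~\Cref{prop:exposeG1}), and the range identity $\range(V) = \range(Q\tilde{V})$ in~\Cref{ro_range}. Together, these show that the facial reduction substitution on the original side and the block-diagonal facial reduction substitution on the symmetric side differ only by the orthogonal change of coordinates $Q$, so the Reynolds operator and the block-diagonalization by $Q$ intertwine \FR and \SR as required. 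This is the step where the earlier exposing-vector transfer lemmas do all the heavy lifting, and once it is verified the induction closes cleanly.
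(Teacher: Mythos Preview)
Your overall strategy—mirror each \FR step on the original by one on the symmetry reduced problem—matches the paper's, and your identification of \Cref{prop:exposeG1} and \Cref{ro_range} as the key ingredients is right. But the inductive framing has a real gap. In step (i) you assert that the once-reduced original problem is again $\GG$-invariant with $\GG$ acting on the smaller space ``via the induced representation'', and you then plan to apply the inductive hypothesis to this reduced pair. The induced action of $\GG$ on $\Null(W_1^{\GG})\cong\R^{r}$ is, however, \emph{not} by permutation matrices in general, so the \SR machinery of \Cref{sec_group}—the commutant $A_{\GG}$, the zero-one coherent-configuration basis $\{B_i\}$, the Reynolds projection, the block-diagonalizing $Q$—does not apply to the reduced problem. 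There is therefore no well-defined ``symmetry reduced program of the reduced original'' in the sense of the theorem's hypotheses, and the inductive hypothesis (which is the theorem statement for a permutation group) does not cover the reduced pair. To salvage the induction you would have to prove a strictly more general statement (for arbitrary orthogonal representations, say), which you have not formulated.

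The paper's proof avoids this by abandoning induction on the theorem and instead tracking the auxiliary systems~\cref{sdp_aux} and~\cref{sys_aux} step by step. The point is that one can use the freedom in choosing $V$ (\Cref{sd_inv}) to maintain the single coordinate relation $V=Q\tilde V$ throughout all \FR iterations. With that choice, $\A_V^*(y)=\tilde V^T\big(Q^T\A^*(y)Q\big)\tilde V$ is automatically block-diagonal (since $\A^*(y)=\sum_i y_iA_i\in A_{\GG}$), and the same $y$ together with the diagonal blocks of $Q^T\A^*(y)Q$ solves~\cref{sys_aux} for $\PP_{F_x}$ with matching rank. No symmetry structure on the intermediate reduced problems is ever invoked; the argument uses only that the \emph{original} data lie in $A_{\GG}$ and the persistent relation $V=Q\tilde V$. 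Note also that your Reynolds-averaging step is unnecessary in this setup: because each $A_i\in A_{\GG}$, every exposing vector produced by the auxiliary system already lies in $A_{\GG}$.
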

\begin{proof}
We show first the inequality $\sd(\PP_{F_x}) \leq \sd(\PP_F).$
In particular, we show that if we apply the facial reduction algorithm to the original
SDP \cref{sdp_standard}, then the solution of the auxiliary system~\cref{sdp_aux} can be used to construct a solution to the auxiliary system~\cref{sys_aux} of the symmetry reduced problem~\cref{sdp_sys_reduced}.

Let $y$ be a solution to the auxiliary system~\cref{sdp_aux} in the $k$-th facial reduction step. Let $W = \A^{*}_V(y) \in A_{\GG}$
(see \Cref{lemma:exposing_vectorinAG}) and $\widetilde{W} = Q^{T}WQ$, where $Q$ is as specified in  \Cref{wedd}.
Further, let $\widetilde{W}_{j} \in \mathcal{S}^{n_{j}}_+$ be the $j$-th block of $W$ ($j=1,\ldots,t$).

If  $k=1$ in  the \FR algorithm,  then the matrices $V$ and $\tilde{V}$ are identity matrices.
 As $W \succeq 0$, we have $\widetilde{W}_{j} \succeq 0$ ($j=1,\ldots,t$). It also holds that $b^{T}y \leq 0$ and
\begin{equation}\label{sd_eq}
\BBt( \Blkdiag(\widetilde{W}_{1},\ldots,\widetilde{W}_{t})) = \BBt(Q^{T}\A^{*}(y)Q)  = \BB(\A^{*}(y)) = A^{T}y,
\end{equation}
see \cref{eq:Qblkdiag,eq:matrixrepresA}.
Thus $(y,\widetilde{W}_{1},\ldots,\widetilde{W}_{t})$ satisfies the auxiliary system~\cref{sys_aux}.
Also, we have that $\rank \left(\A^{*}(y)\right)  = \sum_{j=1}^{t} \rank \widetilde{W}_{j}$.
Let $V$ and $\tilde{V} = \Blkdiag (\tilde{V}_{1},\ldots,\tilde{V}_{t})$ be matrices whose independent columns span $\Null(W)$ and $\Null(\widetilde{W})$, respectively. It follows from~\Cref{ro_range} that $\range(V) = \range(Q\tilde{V})$.
From~\Cref{sd_inv} it follows that we can take  $V= Q\tilde{V}$ in the next step.

Let $k > 1$
and $V= Q\tilde{V}$ where $V$ and $\tilde{V}$ are derived in the previous iterate of the \FR algorithm.
Then, we have that
$$\A_{V}^{*}(y) = V^{T}\A^{*}(y)V = \tilde{V}^{T}\left(Q^{T}\A^{*}(y)Q\right)\tilde{V} = \tilde{V}^{T}\widetilde{W}\tilde{V}$$
is block diagonal. As $\A_{V}^{*}(y) \succeq 0$, we have that each block $\tilde{V}_{j}^{T}\widetilde{W}_{j}\tilde{V}_{j}$ $(j=1,\ldots, t)$ is positive semidefinite.
It also holds that $b^Ty\leq 0$ and $\BBt(\widetilde{W}) = A^{T}y$. Thus $(y,\widetilde{W}_{1},\ldots,\widetilde{W}_{t})$ satisfies the auxiliary system~\cref{sys_aux}.
Further, we have that $\rank \left( \A_{V}^{*}(y)\right)  = \sum_{j=1}^{t} \rank (\tilde{V}_{j}^{T}\widetilde{W}_{j}\tilde{V}_{j})$.

Let $V^\prime$ and $\tilde{V}_{j}^\prime$ $(j=1,\ldots, t)$ be matrices whose independent columns span $\Null\left(\A_{V}^{*}(y)\right)$ and $\Null(\tilde{V}_{k}^{T}\widetilde{W}_{k}\tilde{V}_{k})$ $(j=1,\ldots, t)$, respectively.
As $\A_{V}^{*}(y) = \tilde{V}^{T}\widetilde{W}\tilde{V}$ is block
diagonal we can simply take $V^\prime = \tilde{V}^\prime$.
Thus after updating  $V \leftarrow  VV^\prime$ and $\tilde{V} \leftarrow  \tilde{V}\tilde{V}^\prime$, we have $V =Q \tilde{V}$ in the next step.
We can repeat the same argument until the facial reduction algorithm terminates.

Next, we show that  $\sd(\PP_{F_x}) \geq \sd(\PP_F)$. Let us assume that $(y,\widetilde{W}_{1},\ldots,\widetilde{W}_{t})$
satisfies the auxiliary system \Cref{sys_aux} in the first facial reduction step.
Recall that in the first step, $\tilde{V}$ is the identity matrix. For $Q$ defined as in \Cref{wedd}, we have that
\begin{equation}\label{sd_eq3}
W := Q\Blkdiag(\widetilde{W}_{1},\ldots,\widetilde{W}_{t})Q^{T} \in A_{\GG}.
\end{equation}
To show that $y$ satisfies the auxiliary system \cref{sdp_aux}, we  have to prove that  $\A^{*}(y) \succeq 0$.
It holds that
\begin{equation}\label{sd_eq2}
\BB(\A^{*}(y)) = A^{T}y = \BBt(\Blkdiag(\widetilde{W}_{1},\ldots,\widetilde{W}_{t})) = \BB(W),
\end{equation}
see also \cref{sd_eq}.
The second equality above uses the feasibility of \Cref{sys_aux}.
Since we have that $\A^{*}(y) \in A_{\GG}$
and  $W \in A_{\GG}$, it follows from \cref{sd_eq2} that $W = \A^{*}(y)$, and from
\cref{sd_eq3} and \Cref{sys_aux} that $\A^{*}(y) \succeq 0$.
Recall that we assumed that the data matrices of the \SDP problem \eqref{sdp_standard} are contained in the matrix $*$-algebra  $A_{\GG}$, see \Cref{sec_group}.

Let $k > 1$. Using $V = Q\tilde{V}$ where $V$ and $\tilde{V}$ are derived in the previous iterate of the \FR algorithm, \Cref{sys_aux,sd_eq3}, we have  that
	$$0 \preceq \tilde{V}^{T} \Blkdiag(\widetilde{W}_{1},\ldots,\widetilde{W}_{t}) \tilde{V} = V^{T}WV = V^{T}\A^{*}(y)V = \A_{V}^{*}(y).$$
In addition, it follows from construction of $W$ and \Cref{sd_inv} that we can take $V$ and $\tilde{V}$ such that $V=Q\tilde{V}$ in the next \FR step.

\end{proof}

The following~\Cref{cor:sdone} follows
from~\cite[Theorem 3.2]{Tun:01} in that the linear
manifold is represented by a concrete constraint and is applied to finding
an exposing vector. More precisely, if we can find the affine span of
our original feasible set in the ground space, then we can always find
the representation using a linear mapping as in~\cref{eq:liftedgrset}.
This means we can always find the appropriate exposing vector and obtain
singularity degree one, see also~\cite{DrusWolk:16}.
Note that this includes the hard combinatorial problems we work with below.
\begin{corollary}
\label{cor:sdone}
Consider the quadratic model as given in~\Cref{thm:FRalaTUNforDNN}, and
suppose that the matrix $A$ is part of the given data of the problem,
Then the singularity degree is one.
\end{corollary}
\begin{proof}
The proof uses $A$ to construct the exposing vector. Therefore, one step
of the \FR algorithm is sufficient, see~\cref{eq:liftedgrset}.
More precisely, the linear constraint in the ground set is lifted into the \SDP as
in~\cref{eq:liftedgrset}.
\end{proof}

\begin{remark}
\label{rem:sdSDPDNN}
The definition of singularity degree can be extended to \DNNp,
and to a general cone optimization problem, to be the minimum number of
steps in the \FR~\cite[Algor. B]{bw3}. Here this means we continue to
find the minimum number of steps with nonzero exposing vectors.
An interesting question is to find the relationship between the singularity degree
of the \SDP and the \DNNp. It appears that the \DNN is at most one more than for \SDPp.
Moreover it is known that the singularity degree of the $\DNN^n$  is at most $n$, see \cite[Corollary 20]{MR3845279}.
\end{remark}

\subsection{Simplifications}
\label{sect:simplif}
After \FR, some of the constraints become redundant in the facially reduced program~\cref{sdp_facial}.
We show here that the same constraints are also redundant in the facially and symmetry reduced program~\cref{sdp_sys_facial}.
Proof of \Cref{lemma:simplisy} is clear.

\begin{lemma} \label{lemma:simplisy}
For any subset $\mathcal{I} \subseteq [m] : = \{1,\ldots,m\}$, we define the spectrahedron
$$\mathcal{F}(\mathcal{I}):=\{ X \in \mathcal{S}^{n} \;|\; \langle A_i,X \rangle=b_i ~~\forall i\in \mathcal{I}, \; X = VRV^{T}, \; R \in \mathcal{S}^{r}_{+} \}.$$
If the constraints in $[m]\backslash \mathcal{I}$ are redundant,
e.g., $\mathcal{F}([m]) = \mathcal{F}(\mathcal{I})$, then
$\mathcal{F}([m]) \cap A_{\GG} = \mathcal{F}(\mathcal{I}) \cap A_{\GG}$.
\end{lemma}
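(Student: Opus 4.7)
The plan is to observe that this statement is essentially immediate from the set-theoretic definition of intersection, and the proof reduces to a single chain of inclusions. The hypothesis $\mathcal{F}([m]) = \mathcal{F}(\mathcal{I})$ is an equality of subsets of $\mathcal{S}^n$, and intersecting both sides with any third set (here $A_{\GG}$) preserves the equality.

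First I would note the trivial inclusion $\mathcal{F}([m]) \subseteq \mathcal{F}(\mathcal{I})$, which holds unconditionally since $\mathcal{I} \subseteq [m]$ means $\mathcal{F}(\mathcal{I})$ is defined by strictly fewer linear constraints (all the constraints defining $\mathcal{F}([m])$ are retained). Hence $\mathcal{F}([m]) \cap A_{\GG} \subseteq \mathcal{F}(\mathcal{I}) \cap A_{\GG}$ without any assumption on redundancy.

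For the reverse inclusion, I would take an arbitrary $X \in \mathcal{F}(\mathcal{I}) \cap A_{\GG}$. Then $X \in \mathcal{F}(\mathcal{I})$, and by the hypothesis $\mathcal{F}([m]) = \mathcal{F}(\mathcal{I})$ we conclude $X \in \mathcal{F}([m])$. Combined with $X \in A_{\GG}$, this gives $X \in \mathcal{F}([m]) \cap A_{\GG}$, completing the proof. There is no real obstacle here; the content of the lemma is simply that intersecting with $A_{\GG}$ commutes with the equality inherited from the redundancy assumption, so the same index subset $\mathcal{I}$ that suffices to describe the minimal face in the ambient $\mathcal{S}^n$ also suffices after restricting to the commutant algebra. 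This is why the authors state that the proof is clear.
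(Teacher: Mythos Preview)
Your proposal is correct and matches the paper's treatment exactly: the paper states that the proof of this lemma is clear and omits it, precisely because the conclusion follows immediately from intersecting both sides of the hypothesis $\mathcal{F}([m]) = \mathcal{F}(\mathcal{I})$ with $A_{\GG}$.
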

Although a proof of \Cref{redundant} follows directly from \Cref{lemma:simplisy},
we provide it due to our intricate notation.
\begin{corollary}\label{redundant}
	Let $\mathcal{I} \subsetneq \{1,\ldots,m\}$.
	Suppose that the constraints $\langle A_{k},VRV^{T} \rangle = b_{k}, k \notin
	\mathcal{I}$, are redundant in~\cref{sdp_facial}, i.e.,~the facially
	reduced formulation~\cref{sdp_facial} is equivalent to
	\begin{equation}\label{sdp_facial_r}
	\min_{R \in \Ss^r_+} \{ \langle V^{T}CV,R \rangle \;|\; \langle V^{T}A_{i}V,R \rangle  = b_{i},  \;\; \forall i \in \mathcal{I}  \}.
	\end{equation}
	Then the constraints
	\[
	\sum_{j=1}^{d} A_{k,j}x_{j} = b_{k}, k
	\notin \mathcal{I},
	\]
	are redundant in~\cref{sdp_sys_facial}, i.e.,~the
	facially and symmetry reduced program~\cref{sdp_sys_facial} is equivalent to
	\begin{equation}
	\label{sdp_sys_facial_r}
	\min_{x \in \R^{d}, \tilde{R} \in \Ss^r_+ } \{ c^{T}x \;|\; \sum_{j=1}^{d} A_{i,j}x_{j} = b_{i}, \forall i \in \mathcal{I} ,
	\;\; \BBts(x) = \tilde{V}\tilde{R}\tilde{V}^{T}\}.
	\end{equation}
\end{corollary}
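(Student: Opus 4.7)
The plan is to deduce the corollary directly from~\Cref{lemma:simplisy} by transporting the redundancy through the bijective correspondence between the feasible set of the symmetry-and-facially reduced program and the intersection $\mathcal{F}(\cdot) \cap A_{\GG}$ in the ambient \SDP. The hypothesis gives $\mathcal{F}([m]) = \mathcal{F}(\mathcal{I})$ at the level of the facially reduced \SDP~\cref{sdp_facial}, and~\Cref{lemma:simplisy} then yields the matrix-level equality $\mathcal{F}([m]) \cap A_{\GG} = \mathcal{F}(\mathcal{I}) \cap A_{\GG}$.

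Next I would verify that the variables $(x,\tilde R)$ of~\cref{sdp_sys_facial} parametrize $\mathcal{F}([m]) \cap A_{\GG}$ bijectively, and that~\cref{sdp_sys_facial_r} corresponds in the same way to $\mathcal{F}(\mathcal{I}) \cap A_{\GG}$. By~\cref{eq:FxAg}, every $X \in \mathcal{F}_X \cap A_{\GG}$ has a unique expression $X = \BB^*(x) = Q\BBts(x)Q^T$ for some $x \in \mathbb{R}^d$, and by~\cref{eq:matrixrepresA} the constraints $\langle A_i, X\rangle = b_i$ become precisely $\sum_{j=1}^d A_{i,j} x_j = b_i$. Using the identification $V = Q\tilde V$ from~\Cref{ro_range} together with the block-diagonal structure of $\tilde V$ from~\Cref{thm:exposeG2}, the \FR substitution $X = VRV^T$ matches up exactly with $\BBts(x) = \tilde V \tilde R \tilde V^T$ after identifying $R$ with $\tilde R$ under the matching block structure. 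The objectives reduce in the same way, since $\langle V^T C V, R\rangle = c^T x$ by the analogue of~\cref{eq:matrixrepresA} applied to $C$.

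Combining these, the equality of the two intersections forces equality of the two feasible sets in $(x,\tilde R)$, with matching objectives, and so~\cref{sdp_sys_facial} and~\cref{sdp_sys_facial_r} are equivalent. There is no real obstacle here beyond careful bookkeeping of the two changes of variables, one for \SR ($X \leftrightarrow x$) and one for \FR ($X \leftrightarrow R$, equivalently $x \leftrightarrow \tilde R$); the substance of the argument is carried entirely by~\Cref{lemma:simplisy}, with the symmetry-reduction parametrization serving only to translate the statement from matrices to the reduced scalar variables.
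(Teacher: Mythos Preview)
Your proposal is correct and follows exactly the approach the paper intends: the paper states that the proof of \Cref{redundant} is ``clear'' (immediately following \Cref{lemma:simplisy}), and your argument spells out precisely how the corollary is a direct translation of \Cref{lemma:simplisy} through the change of variables $X = Q\BBts(x)Q^T$ and the identification $\range(V) = \range(Q\tilde V)$ from \Cref{ro_range}. The bookkeeping you describe is all that is needed.
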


\begin{proof}
Let  $Q$ be specified in \Cref{wedd}.
Since  $\range(V) =  \range(Q\tilde{V})$, see Lemma \ref{ro_range},
 we  assume w.l.g.~that $V$ in \cref{sdp_facial_r}
satisfies $V = Q\tilde{V}$.
Let $(x,\tilde{R})$ be feasible for \cref{sdp_sys_facial_r}. Define $X := \sum_{i=1}^{d}B_{i}x_{i}$.
We show the equivalence in the following order:
$$\begin{array}{rcl}
\fbox{$(x,\tilde{R})$ \text{feasible
		for} \cref{sdp_sys_facial_r}}
&\implies &  \fbox{$\tilde{R}$
	\text{feasible for} \cref{sdp_facial_r}}
\\ &\implies &
\fbox{$\tilde{R}$ \text{feasible for} \cref{sdp_facial}}
\\ &\implies &
\fbox{$(x,\tilde{R})$ \text{feasible for} \cref{sdp_sys_facial}}.
\end{array}$$
Since
$
Q^{T}( \sum_{i=1}^{d}B_{i}x_{i})Q = \tilde{V}\tilde{R}\tilde{V}^{T} = Q^{T}V\tilde{R}V^{T}Q,
$
we have $\sum_{i=1}^{d}B_{i}x_{i} = V\tilde{R}V^{T}$. Using the feasibility and \eqref{eq:matrixrepresA}, it holds that
\[
\langle A_{i}, V\tilde{R}V^{T} \rangle = \langle A_{i}, \sum_{j=1}^{d}B_{j}x_{j}\rangle = \sum_{j=1}^{d} A_{i,j}x_{j} = b_{i}, \forall i \in \mathcal{I}.
\]
Thus all the linear constraints in \cref{sdp_facial_r} are satisfied, and $\tilde{R} \succeq 0$ is feasible for \cref{sdp_facial_r}.
By assumption, $\tilde{R}$ is also feasible for \cref{sdp_facial}. Thus the
constraints  $\langle A_{i}, V\tilde{R}V^{T} \rangle = b_{i}, \forall i \notin
\mathcal{I}$ are satisfied as well.
This shows that $(x,\tilde{R})$ is feasible for~\eqref{sdp_sys_facial}.
\end{proof}

To obtain a formulation for the facially and symmetry reduced program~\cref{sdp_sys_facial} in variable $\tilde{R}$ only, we can replace $x$ in terms of $\tilde{R}$ using the constraint $\BBts(x) = \tilde{V}\tilde{R}\tilde{V}^{T}$.
This substitution can be done easily by rewriting the constraints as
$$b_{i} = \langle A_{i}, X \rangle = \langle Q^{T}A_{i}Q, Q^{T}XQ \rangle  = \langle Q^{T}A_{i}Q, \BBts(x) \rangle = \langle Q^{T}A_{i}Q, \tilde{V}\tilde{R}\tilde{V}^{T} \rangle.$$
The objective can be similarly changed. This method, however,
does not work for \DNN relaxations.
This difficulty can be resolved as follows.

\begin{theorem}\label{uni_redu}%
Consider the facially and symmetry reduced  relaxation~\cref{sdp_sys_facial} with nonnegativity constraints,
\begin{align}
\label{dnn_sys_facial}
\begin{split}
& \min \{ c^{T}x \;|\; \textdef{$Ax=b$},
\;\;   \BBts(x) = \tilde{V}\tilde{R}\tilde{V}^{T}, \; \tilde{R} \succeq
	0, \; x \geq 0\},
\end{split}
\end{align}
where $\tilde{R} =  \Blkdiag(\tilde{R}_{1},\ldots,\tilde{R}_{t})$.
Equate $x$ with
\[
x\leftarrow f(\tilde{R}) ={\Diag(w)^{-1}}\BB(V\tilde{R}V^{T}),
\]
 where
\[
w = (\langle B_{i}, B_{i} \rangle )_{i} \in
\mathbb{R}^{d},\, V = Q\tilde{V},
\]
and $Q$ is specified in \Cref{wedd}.
Then~\cref{dnn_sys_facial} is equivalent to
\begin{align}
\label{dnn_sys_facial2}
\begin{split}
& \min \{ c^T\!f(\tilde{R}) \;|\; \textdef{$Af(\tilde{R})=b$},
\;\;  \tilde{R} \succeq 0, \; f(\tilde{R}) \geq 0\},
\end{split}
\end{align}
where diagonal blocks of the block diagonal matrix $\tilde{R}$
are set to be equal for the corresponding repeating blocks in  $Q^{T}A_{\GG}Q$.
\end{theorem}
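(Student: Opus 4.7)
The plan is to recognize $f$ as an explicit inverse of the map $\tilde R \mapsto x$ hidden in the equality constraint $\BBts(x) = \tilde V\tilde R\tilde V^T$ of~\cref{dnn_sys_facial}, so that substituting $x \leftarrow f(\tilde R)$ eliminates $x$ and produces~\cref{dnn_sys_facial2}.

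First I would establish the orthogonality property of the basis. Since $\{B_1,\ldots,B_d\}$ is a coherent configuration (\Cref{coherentconfiguration}) arising from the orbits of $\GG$ on ordered pairs of vertices, the supports of the $B_i$ partition $\{1,\ldots,n\}\times\{1,\ldots,n\}$. Hence $\langle B_i,B_j\rangle = 0$ for $i\neq j$ and $w_i = \langle B_i,B_i\rangle = |\mathrm{supp}(B_i)| > 0$, so $\Diag(w)$ is invertible and $f$ is well defined. Applying $\BB$ to the identity $\BBs(x) = \sum_i x_i B_i$ then gives the key formula $\BB(\BBs(x)) = \Diag(w)\,x$.

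For any feasible $(x,\tilde R)$ of~\cref{dnn_sys_facial}, I would combine
\[
\BBs(x) \;=\; Q\,\BBts(x)\,Q^T \;=\; Q\tilde V\tilde R\tilde V^T Q^T \;=\; V\tilde R V^T,
\]
with the key formula above to obtain $\BB(V\tilde RV^T) = \Diag(w)\,x$, hence $x = f(\tilde R)$. Thus $x$ is uniquely determined by $\tilde R$, and the constraints and objective in~\cref{dnn_sys_facial} translate directly: $c^Tx = c^Tf(\tilde R)$, $Ax=b \Leftrightarrow Af(\tilde R)=b$, and $x\geq 0 \Leftrightarrow f(\tilde R)\geq 0$. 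For the reverse direction, given $\tilde R\succeq 0$ feasible for~\cref{dnn_sys_facial2}, I need to verify that $x := f(\tilde R)$ satisfies $\BBts(x) = \tilde V\tilde R\tilde V^T$, equivalently $\BBs(f(\tilde R)) = V\tilde R V^T$. By~\Cref{wedd}, $Q$ simultaneously block-diagonalizes the entire matrix $*$-algebra $A_{\GG}$, so the block-diagonal matrix $\tilde V\tilde R\tilde V^T$ pulls back to $V\tilde RV^T \in A_{\GG}$. Since $\{B_j/\sqrt{w_j}\}_{j=1}^d$ is an orthonormal basis of $A_{\GG}$ (by the orthogonality from the previous paragraph), the orthogonal projection of $V\tilde RV^T$ onto $A_{\GG}$ equals $V\tilde RV^T$ itself, and by construction of $f$ this projection coincides with $\BBs(f(\tilde R))$.

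The main obstacle I expect is the membership $V\tilde RV^T \in A_{\GG}$ needed for the reverse direction: it uses that conjugation by $Q$ from~\Cref{wedd} is a $*$-algebra isomorphism between $A_{\GG}$ and the block-diagonal algebra of matching block sizes, and not merely a simultaneous diagonalization of the individual basis matrices $B_j$. Once this structural fact is invoked, the explicit orthogonality calculation closes the equivalence between~\cref{dnn_sys_facial} and~\cref{dnn_sys_facial2}.
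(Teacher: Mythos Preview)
Your proposal is correct and follows essentially the same route as the paper's proof: both directions hinge on the identity $\BB\,\BBs = \Diag(w)$ (from the disjoint supports of the $B_i$) for the forward inclusion, and on the membership $V\tilde R V^T \in A_{\GG}$ via Wedderburn for the reverse inclusion, after which your orthogonal-projection argument is equivalent to the paper's ``unique $x$ with $\BBs(x)=V\tilde R V^T$''. The only nuance the paper makes explicit that you leave implicit is the case of repeating blocks in the Wedderburn decomposition: the image algebra $Q^T A_\GG Q$ consists of block-diagonal matrices in which repeated blocks are forced to coincide, so an arbitrary $\tilde V\tilde R\tilde V^T$ need not lie in it unless repeated blocks are first removed (which the paper assumes, noting the general case follows similarly).
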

\begin{proof}
If $(x,\tilde{R})$ is feasible for~\cref{dnn_sys_facial}, then $\BBs(x)
= V\tilde{R}V^{T}$. As $w>0$ and $\BB \BBs = \Diag(w)$, we have $x =
 f(\tilde{R})$ and thus $\tilde{R}$ is feasible for~\cref{dnn_sys_facial2}.

Let $\tilde{R}$ be feasible for~\cref{dnn_sys_facial2}.
Since $\tilde{V}\tilde{R}\tilde{V}^{T}$ is a block-diagonal matrix in the
 algebra $Q^{T}A_{\GG}Q$, we have $V\tilde{R}V^{T}  =
Q\tilde{V}\tilde{R}\tilde{V}^{T}Q^{T} \in A_{\GG}$. It follows
from~\Cref{wedd} that there exists a unique $x$ such that
$V\tilde{R}V^{T} = \BBs(x)$. Then we must have $x = f(\tilde{R})$ and
thus $(x,\tilde{R})$ is feasible for~\cref{dnn_sys_facial}.
\end{proof}

For the Hamming scheme, we have an explicit expression for the
orthogonal matrix $Q$ used in~\Cref{uni_redu}, see~\Cref{sec:Hamming}
and~\Cref{sec_qap}. In general, we do not know the corresponding
orthogonal matrix explicitly. In~\Cref{sec_mc}, we use the  heuristics
from~\cite{MR2546331} to compute a  block diagonalization of $A_\GG$.
In this case, the equivalence in~\Cref{uni_redu} may not be true, and
\eqref{dnn_sys_facial2} may be weaker than~\cref{dnn_sys_facial}.
However our computational results indicate that all the bounds remain
the same, see~\Cref{table:QueenRez} below.

	\section{The alternating direction method of multipliers, \ADMMp}
\label{sec_admm}
	
It is well known that interior-point methods do not scale well for \SDPp.
Moreover, they have great difficulty with handling additional cutting
planes such as nonnegativity constraints. In particular, solving the
doubly nonnegative relaxation, \DNNp, using interior-point methods is extremely difficult.
The alternating direction method  of multipliers  is a first-order method for convex problems developed in the 1970s, and rediscovered recently.
This method decomposes an optimization problem into subproblems that may
be easier to solve. In particular, it is extremely successful for
splittings with two cones.
This feature makes the \ADMM well suited for our large-scaled \DNN problems. For
state of the art in theory and applications of the \ADMMp, we refer the
interested readers to~\cite{BoydParikhChuPeleatoEckstein:11}.

Oliveira, Wolkowicz and Xu~\cite{OliveiraWolkXu:15} propose a version of
the \ADMM for solving an \SDP relaxation for the Quadratic Assignment Problem (\QAP).
Their computational experiments show that the proposed variant of the \ADMM exhibits remarkable robustness, efficiency, and even provides improved bounds.

\subsection{Augmented Lagrangian}
\label{sect:auglagr}
We modify the approach from~\cite{OliveiraWolkXu:15} for solving our
\SR and \FR reduced \DNN relaxation~\cref{sdp_sys_facial}.
We have a greatly simplified structure as we applied \SR to the \SDP
relaxation, and we were then able to move
the nonnegativity constraints to a simple
vector $x\geq 0$ contraint. We in particular obtain a
more efficient approach for solving the $x$-subproblem.

\medskip

Let $\tilde{V} = \Blkdiag(\tilde{V}_{1},\ldots,\tilde{V}_{t})$ and
$\tilde{R} = \Blkdiag(\tilde{R}_{1},\ldots,\tilde{R}_{t})$. The augmented Lagrangian
of~\cref{sdp_sys_facial} corresponding to the linear
	constraints $\BBts(x) = \tilde{V}\tilde{R}\tilde{V}^{T}$ is given by:
	\begin{align*}
	{\LL}(x,\tilde{R},\tilde{Z})  = \langle \tilde{C}, \BBts(x)\rangle +
\langle \tilde{Z}, \BBts(x) - \tilde{V} \tilde{R}\tilde{V}^{T} \rangle + \frac{\beta}{2}
||\BBts(x) - \tilde{V}\tilde{R}\tilde{V}^{T}||^{2},
	\end{align*}
where, see~\cref{sdp_afterBts}, $\tilde{C} = Q^{T}C Q$
is a block-diagonal matrix as $C \in A_\GG$,  $\tilde{Z}$ is also in
block-diagonal form, and $\beta >0$ is the penalty parameter.
	
The alternating direction method of multipliers, \ADMMp,
uses the \textdef{augmented Lagrangian, ${\LL}(x,\tilde{R},\tilde{Z})$}, and
essentially solves the max-min problem
\[
\max_{\tilde{Z}} \min_{x\in P,\tilde{R}  \succeq 0}{\LL}(x,\tilde{R},\tilde{Z}),
\]
where $P$ is a simple polyhedral set of constraints on $x$, e.g.,~linear
constraints $Ax=b$ and nonnegativity constraints, see~\cref{psim_def} below.
\index{$P$, polyhedral constraints on $x$}
\index{polyhedral constraints on $x$, $P$}
The advantage of the method is the simplifications obtained for
the constraints by taking advantage of the splitting in the variables. We then
find the following updates $(x_+,\tilde{R}_+,\tilde{Z}_+)$:
\index{${\LL}(x,R,Z)$, augmented Lagrangian}
\begin{align}\nonumber
	x_+ & = \arg \min_{x  \in P} {\LL}(x,\tilde{R},\tilde{Z}), \nonumber \\
	\tilde{R}_+ & = \arg \min_{\tilde{R}  \succeq 0 } {\LL}(x_+,\tilde{R},\tilde{Z}),\nonumber  \\
	\tilde{Z}_+ & = \tilde{Z} + \gamma   \beta   (\BBts(x_+) - \tilde{V}\tilde{R}_+\tilde{V}^{T}). \nonumber 
	\end{align}
Here, $\gamma \in(0, \frac{1+\sqrt{5}}{2})$ is the step size for updating the dual variable $\tilde{Z}$.
	In the following sections we explain in details how to solve each subproblem.
	
\subsection{On solving the $\tilde R$-subproblem}

	The $\tilde R$-subproblem can be explicitly solved. We complete the square
	and  get the equivalent problem
	\begin{equation}\label{admm_Rsub}
	\begin{array}{cll}
	\tilde R_+ &=	& \min\limits_{\tilde R \succeq 0}  ||\BBts(x) -
\tilde{V}\tilde R\tilde{V}^{T} + \frac 1{\beta} \tilde{Z}||^{2} \\[1.5ex]
	&= &  \min\limits_{\tilde R \succeq 0}  ||\tilde R - \tilde{V}^{T}(\BBts(x)+ \frac 1{\beta} \tilde{Z})\tilde{V}||^{2} \\[1.5ex]
	&= &  \sum_{k=1}^{t} \min\limits_{\tilde R_{k} \succeq 0}
||\tilde R_{k} -
\big(\tilde{V}^{T}(\BBts(x)+ \frac 1{\beta} \tilde{Z})\tilde{V}\big)_{k}||^{2}.
	\end{array}
	\end{equation}
	Here, we normalize each block $\tilde{V}_k$ such that $\tilde{V}_k^T \tilde{V}_k=I$, and thus
 $\big(\tilde{V}^{T}(\BBts(x)+ \frac 1{\beta} \tilde{Z})\tilde{V}\big)_{k}$ is the $k$-th block of $\tilde{V}^{T}(\BBts(x)+ \frac 1{\beta} \tilde{Z})\tilde{V}$
	corresponding to $\tilde R_{k}$.
So we only need to solve $k$ small problems whose optimal solutions are
$$
\tilde R_{k} =
\mathcal{P}_{\mathbb{S}_+}\left(\tilde{V}^{T}(\BBts(x)+ \frac
1{\beta} \tilde{Z})\tilde{V}\right)_{k}, \quad k=1,\ldots,t,$$
	where $\mathcal{P}_{\mathbb{S}_+}(M)$ is the projection
onto the cone of positive semidefinite matrices.
	
	\index{$\mathcal{P}_{\mathbb{S}_+}(\cdot)$, projection onto
positive semidefinite matrices}
	\index{projection onto positive semidefinite matrices,
$\mathcal{P}_{\mathbb{S}_+}(\cdot)$}

	\subsection{On solving the $x$-subproblem} \label{subSect:xSub}
	For the $x$-subproblem, we have
	\begin{equation}\label{xsub}
	x_+ =  \arg \min\limits_{x  \in P}  \left\|\BBts(x) -
\tilde{V}\tilde{R}\tilde{V}^{T} + \frac{\tilde{C}+ \tilde{Z}}{\beta}\right\|^{2}.
	\end{equation}
	For many combinatorial optimization problems, some of the
constraints $Ax=b$ in $\eqref{sdp_sys_reduced}$ become redundant after
\FR of their semidefinite programming relaxations, see~\Cref{redundant}.
Thus, the set $P$ often collapses to a simple set. This often leads to an
analytic solution for the $x$-subproblem; e.g.,~this happens for the
quadratic assignment, graph partitioning, vertex separator, and
shortest path problems.

	For some interesting applications, the $x$-subproblem is
equivalent to the following special case of the weighted, relaxed,
quadratic knapsack problem:
	\begin{equation}\label{psim_def}
	\begin{array}{cll}
	\min_{x}  & \frac{1}{2}||{\TT} ^*(x)- Y ||^{2} \\[1.5ex]
	\text{s.t.} & x\in P:= \{x \,|\,  w^{T}x = c,\, x\geq 0\},
	\end{array}
	\end{equation}
	where $Y$ is a given matrix and ${\TT} ^*(x) = \sum_{i=1}^{q} x_{i}T_{i}$ for some given symmetric matrices $T_{i}$.
The problem~\cref{psim_def} is a projection onto the weighted simplex.
We consider the following assumption on a linear transformation
${\TT}: \Sn \to \Rq$ and its adjoint.
\begin{assump}
\label{psim_ass}
The linear transformation ${\TT}: \Sn \to \Rq$ in~\Cref{psim_def} satisfies
$$
{\TT} ({\TT} ^*(x)) = \Diag(w)x, \, \forall x\in \Rq,\text{  for some  } w>0.
$$
\end{assump}

\begin{lemma}
Suppose that the linear transformation $\TT$
satisfies~\Cref{psim_ass}, and that~\Cref{psim_def} is feasible.
Then the projection problem~\cref{psim_def} can be solved efficiently
(explicitly) using~\Cref{prj_sim}.
\end{lemma}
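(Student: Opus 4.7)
The plan is to reduce the problem to a one-dimensional root-finding problem by exploiting \Cref{psim_ass} to decouple the quadratic term. First I would expand the objective: $\frac{1}{2}\|\TT^*(x)-Y\|^2 = \frac{1}{2}\langle \TT^*(x),\TT^*(x)\rangle - \langle \TT^*(x),Y\rangle + \tfrac{1}{2}\|Y\|^2$, and use $\langle \TT^*(x),\TT^*(x)\rangle = \langle x,\TT(\TT^*(x))\rangle = \langle x,\Diag(w)x\rangle$ from \Cref{psim_ass}, plus $\langle \TT^*(x),Y\rangle = \langle x,\TT(Y)\rangle$. Writing $y := \TT(Y)\in\R^q$, the problem becomes the separable quadratic
\[
\min_{x\geq 0,\ w^T x = c}\ \tfrac{1}{2}\sum_{i=1}^q w_i x_i^2 - \sum_{i=1}^q y_i x_i.
\]

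Next I would write the KKT system with multiplier $\lambda$ for $w^Tx=c$ and $\mu\geq 0$ for $x\geq 0$. Stationarity gives $w_i x_i - y_i - \lambda w_i - \mu_i = 0$, and complementary slackness $\mu_i x_i=0$, $\mu_i,x_i\geq 0$ combine to yield the closed form
\[
x_i(\lambda) = \max\!\left\{0,\ \tfrac{y_i}{w_i}+\lambda\right\},\qquad i=1,\ldots,q.
\]
Since $w>0$, this is well-defined and, as a function of $\lambda$, each $x_i(\lambda)$ is continuous, nondecreasing, and piecewise linear with a single breakpoint at $\lambda=-y_i/w_i$.

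The scalar $\lambda$ is then determined by the feasibility equation $g(\lambda):=\sum_{i=1}^q w_i\, x_i(\lambda) = c$. Since $w>0$, $g$ is continuous, nondecreasing, and piecewise linear with at most $q$ breakpoints; feasibility of \cref{psim_def} (the hypothesis of the lemma) guarantees existence of a solution $\lambda^*$, which is unique on the interval where $g$ is strictly increasing. Sorting the breakpoints $-y_i/w_i$ and evaluating $g$ on each piece locates the correct piece in $O(q\log q)$, after which $\lambda^*$ is computed by solving a single linear equation on that piece. Substituting $\lambda^*$ into $x_i(\lambda^*)$ produces the optimal solution, and this is precisely the content of the algorithm \Cref{prj_sim} referenced in the statement.

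The only delicate point I anticipate is verifying that \Cref{psim_ass} is genuinely what makes the decoupling work: without the diagonal identity $\TT\TT^* = \Diag(w)$, the quadratic cross terms $x_i x_j$ ($i\neq j$) would not vanish and the separation into coordinate-wise projections would fail. Everything else is a standard projection-onto-the-weighted-simplex argument, so the proof reduces to checking the KKT derivation carefully and citing the bisection/sorting routine for the monotone piecewise-linear equation $g(\lambda)=c$.
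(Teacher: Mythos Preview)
Your proposal is correct and follows essentially the same approach as the paper: both apply KKT to exploit \Cref{psim_ass} (the diagonal identity $\TT\TT^*=\Diag(w)$) to decouple the stationarity condition coordinatewise, obtain $x_i=\max\{0,\,y_i/w_i+\tau\}$ (your $\lambda$ is the paper's $\tau$, your $\mu$ is the paper's $\lambda$), and then determine the scalar multiplier by sorting the ratios $y_i/w_i$ and enforcing $w^Tx=c$. Your added commentary on monotonicity and piecewise linearity of $g(\lambda)$ is a slightly more explicit justification of the correctness of \Cref{prj_sim} than the paper gives, but the argument is the same.
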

\begin{proof}
	The Lagrangian function of the problem is $$\frac{1}{2}||
{\TT} ^*(x) - {Y}||^{2} - \tau(w^{T}x - c) - \lambda^{T}x,$$
	where $\tau \in \R$ and $\lambda \in \R_+^{q}$ are the
Lagrangian multipliers.  The KKT optimality conditions for the problem
are given by
	$$\begin{array}{rl}
	{\TT} ({\TT} ^*(x)) - {\TT} ({Y}) - \tau w - \lambda &= 0, \\
	x & \geq 0, \\
	\lambda & \geq 0, \\
	\lambda^{T}x & = 0, \\
	w^{T}x &= c.
	\end{array}$$
	
Note that $\Diag(w)$ is the matrix representation of $\TT\circ\TT^*$.
This means that $ \langle T_i,T_j \rangle =0, \forall i\neq j$,
and we can simplify the first condition.\footnote{Note that this is always satisfied for basis matrices from a coherent configuration.}
This yields
\[
x_{i} =  w_{i}^{-1}({\TT} ({Y}))_{i} + \tau +
w_{i}^{-1}\lambda_{i}.
\]
Define the data vector $y := {\TT} ({Y})$.
The complementary slackness $\lambda^{T}x = 0$ implies that if $x_{i} > 0$, then $\lambda_{i} = 0$  and $x_{i} = w_{i}^{-1}y_{i} + \tau$.
If $x_{i} = 0$, then $w_{i}^{-1}y_{i} + \tau = -w_{i}^{-1}\lambda_{i} \leq 0$.
Thus the zero and positive entries of the optimal solution $x$ correspond to the smaller than $-\tau$ and the larger than $-\tau$ entries of
 $(w_{i}^{-1}y_{i})_{i=1}^{q}$, respectively.
	
Let us assume, without loss of generality, that $(w_{i}^{-1}y_{i})_{i=1}^{q},x$ are sorted in non-increasing order:
\[
\frac{y_{1}}{w_{1}} \geq  \ldots \geq  \frac{y_{k}}{w_{k}} \geq
\frac{y_{k+1}}{w_{k+1}} \geq  \ldots \geq \frac{y_{q}}{w_{q}}, \quad
x_{1} \geq  \ldots \geq  x_{k} > x_{k+1} =  \ldots =x_{q} = 0.
\]
 The condition $w^{T}x = c$ implies that
	$$ w^{T}x = \sum_{i=1}^{k}w_{i}(\frac{y_{i}}{w_{i}} + \tau)  = \sum_{i=1}^{k}y_{i} + \tau \sum_{i=1}^{k}w_{i} = c,$$
	and thus $$\tau = \frac{c- \sum_{i=1}^{k}y_{i}}{\sum_{i=1}^{k}w_{i}}.$$
	Therefore, one can solve the problem by simple inspection once $k$ is known.
	The following algorithm finds an  optimal solution $x$ to the
problem~\cref{psim_def}. The correctness of the algorithm is then similar to the projection onto  the (unweighted) simplex problem, see~\cite{chen2011projection,condat2016fast}.
\end{proof}
		
	\begin{samepage}
\hrulefill
\vspace{-.06in}
	\begin{alg}[Finding an optimal solution for~\cref{psim_def}]
\label{prj_sim}
		\begin{algorithmic}
\mbox{ }
\State \textbf{Input:} $w\in \R^{q},y \in \R^{q}$
\State \quad Sort $\{y_i/w_i\}$ such that $y_{1}/w_{1} \geq  \ldots \geq y_{q}/w_{q}$
\State \quad Set $k: = \max_{1 \leq k \leq n} \{ k \;|\;  w_{k}^{-1}y_{k} + (\sum_{i=1}^{k}w_{i})^{-1}(c- \sum_{i=1}^{k}y_{i}) > 0 \}$
			\State \quad Set $\tau := (\sum_{i=1}^{k}w_{i})^{-1}(c- \sum_{i=1}^{k}y_{i})$
			\State \quad Set $x_{i} = \max \{w_{i}^{-1}y_{i} + \tau ,0\}$ for $i=1,\ldots q$
			\State \textbf{Output:} $x  \in \R^{q}$
		\end{algorithmic}
	\end{alg}
\vspace{-.15in}
\hrulefill
\vspace{.1in}
\end{samepage}

	In our examples, see~\Cref{sec_qap,sec_mc}, the $x$-subproblem~\cref{xsub} satisfies
	\Cref{psim_ass}. Moreover, we have the following lemma.
We remind the reader that $J$ denotes the matrix of all ones.
	\begin{lem}
\label{eq:xsubwJ}
	The $x$-subproblem~\cref{xsub} satisfies~\Cref{psim_ass}, if
$$P = \{x \in \R^{q} \; | \; \langle J, \BB^*(x) \rangle = c , x \geq 0\}.$$
	\end{lem}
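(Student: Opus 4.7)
The plan is to match the data of the $x$-subproblem~\cref{xsub} to the template in~\cref{psim_def}, and then verify the two requirements of~\Cref{psim_ass}: (i) that $\TT\circ\TT^*=\Diag(w)$ for some $w>0$, and (ii) that the linear equality defining $P$ has the weight vector $w$ as its coefficient vector.

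First, with $Y:=\tilde V\tilde R\tilde V^T-(\tilde C+\tilde Z)/\beta$, the $x$-subproblem takes the form $\min_{x\in P}\tfrac12\|\BBts(x)-Y\|^2$ up to the factor $\tfrac12$. So we set $\TT^*:=\BBts$ and $\TT:=\BBt$, i.e., $T_j=\tilde B_j$. The main computation is to evaluate
\[
(\BBt\circ\BBts)(x)_i \;=\; \langle \tilde B_i,\BBts(x)\rangle
\;=\;\sum_{j=1}^d x_j\,\langle \tilde B_i,\tilde B_j\rangle.
\]
Since $\tilde B_j=Q^TB_jQ$ with $Q$ orthogonal, $\langle\tilde B_i,\tilde B_j\rangle=\trace(Q^TB_i^TQQ^TB_jQ)=\langle B_i,B_j\rangle$. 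Because $\{B_1,\ldots,B_d\}$ is a coherent configuration of $\{0,1\}$-matrices with $\sum_i B_i=J$, the supports of distinct $B_i$ are disjoint, and hence $\langle B_i,B_j\rangle=0$ for $i\neq j$, while $\langle B_i,B_i\rangle$ equals the number of ones in $B_i$, which is strictly positive since $B_i\neq 0$.

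Therefore, setting $w_i:=\langle B_i,B_i\rangle>0$, we obtain $(\BBt\circ\BBts)(x)=\Diag(w)x$, which is exactly~\Cref{psim_ass}. Finally, the constraint defining $P$ reads
\[
\langle J,\BB^*(x)\rangle \;=\;\sum_{i=1}^d x_i\langle J,B_i\rangle
\;=\;\sum_{i=1}^d x_i\,\langle B_i,B_i\rangle \;=\; w^Tx,
\]
where we used $\langle J,B_i\rangle=\langle \sum_k B_k,B_i\rangle=\langle B_i,B_i\rangle$ (again by disjointness of supports). Thus $P=\{x\ge 0:w^Tx=c\}$ with the same $w$, which is the simplex-type set in~\cref{psim_def}, completing the verification.

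The only potentially subtle point is ensuring that the same $w$ appears in both roles (weights of $\TT\circ\TT^*$ and coefficients of the linear constraint); this is what makes~\Cref{prj_sim} directly applicable. Both identifications follow from the single structural fact that the basis elements $B_i$ are mutually orthogonal in the trace inner product, i.e., from the coherent configuration property, so no additional hypotheses are needed.
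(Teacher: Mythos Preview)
Your proof is correct and follows essentially the same route as the paper: set $\TT^*=\BBts$, use orthogonality of $Q$ to reduce $\langle\tilde B_i,\tilde B_j\rangle$ to $\langle B_i,B_j\rangle$, and invoke the disjoint-support property of the coherent configuration to get $\TT\circ\TT^*=\Diag(w)$ with $w_i=\trace(B_i^TB_i)>0$ and $\langle J,\BB^*(x)\rangle=w^Tx$. Your write-up is in fact more explicit than the paper's about \emph{why} the $B_i$ are pairwise orthogonal and why the same $w$ appears in both the composition and the constraint, which is a nice clarification.
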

	\begin{proof}
	It holds that
	\begin{equation}\label{BBs}
\big(\tilde{B}(\tilde{B}^*(x))\big)_{i} =  \langle \tilde{B}_{i} , \sum_{j=1}^q \tilde{B}_{j}x_{j} \rangle = \langle \tilde{B}_{i},
\tilde{B}_{i} x_{i} \rangle = \trace (Q^{T}B_{i}^{T}QQ^{T}B_{i}Q) x_{i} = w_{i} x_{i},
\end{equation}
where $w_i = \trace (B_{i}^{T}B_{i})$. Furthermore, $\langle J, \BB^*(x)
\rangle  = w^{T}x$ with $w = (w_{i}) \in \R^{q}$.
 Thus we set $\TT = \BB$ and note that
${\TT } ({\TT } ^*(x)) = \Diag(w)x$.
	\end{proof}

\section{Numerical results}
\label{sec_numers}
We now demonstrate the efficiency of our new
approach on two classes of problems:
the quadratic assignment problem, \QAPp, and several types of
graph partitioning problem, \GPp.

Our tests were on: Dell PowerEdge M630 computer; two Intel
Xeon E5-2637v3 4-core 3.5 GHz (Haswell) CPU; 64GB memory; linux.
The interior point solver was Mosek, see~\cite{andersen2000mosek}.
We had to use a different computer to accommodate some of the larger
problems when using an interior point approach,
see description of~\Cref{table:GPPRezthree}.

We report only the ADMM solver time, since the preprocessing time is very small in comparison with the ADMM part.
In particular, we find exposing vectors  explicitly in all our examples.
Further, for most of the test instances we know generators of automorphism groups as well as the orthogonal matrix $Q$.
For instances for which we need to find generators  e.g.,  the instances in~\Cref{table:graphInfo}  the preprocessing time is less than one second.

The stopping conditions and tolerances
are outlined at the start of~\Cref{sect:numericsQAP},
in~\Cref{def:tolerances}.
Our results include \emph{huge} problems of sizes up to $n=512$ for the \QAPp,
yielding of the order $n^2$ \SDP constraints and
$n^4$ nonnegativity constraints.\footnote{The link to the codes for the \QAP can be found on the webpage
\url{www.huhao.org}. The codes for the other problems require finding
symmetries in the graph; and therefore these codes
and details can be obtained upon request directly from the authors.} \label{pg:codes}

\subsection{The quadratic assignment problem, \QAPp}
\label{sec_qap}
\index{quadratic assignment problem, \QAPp}
\index{\QAPp, quadratic assignment problem}
	
\subsubsection{Background for the \QAPp}
\label{sect:backgrQAP}
The Quadratic Assignment Problem  was introduced in $1957$ by Koopmans and Beckmann as a model for location problems that take into
account  the linear cost of placing a new facility on a certain site,
plus the quadratic cost arising from the product
of the flow between facilities and distances between sites.
The \QAP contains the traveling salesman problem as a special case and
is therefore NP-hard in the strong sense. It is generally considered to
be one of the \emph{hardest} of the NP-hard problems.

\index{$\Pi$, permutation matrices order $n$}
\index{permutation matrices order $n$, $\Pi$}

Let $A,B\in \Sn$, and let $\Pi_n$ be the set of $n \times n$
permutation matrices. The \QAP (with the linear term with
appropriate $C$ in brackets)
can be stated as follows:
\[
\min\limits_{X \in \Pi_n} \tr(AX^TBX) \quad (+\trace(X^TC)).
\]
The \QAP is extremely difficult to solve to optimality,
e.g.,~problems with $n\geq 30$ are still considered hard.
It is well known that \SDP relaxations provide
strong bounds, see e.g.,~\cite{KaReWoZh:94,MR2546331}.
However even for sizes $n\geq 15$, it is difficult to solve the
resulting \SDP relaxation by interior point methods if one cannot
exploit special structure such as symmetry.
Solving the \DNN relaxation is significantly more difficult.

Here, we first consider the \DNN relaxation for the \QAP
from Povh and  Rendl~\cite{MR2532462}, i.e.,
	\index{$u_i$}
\begin{equation}\label{qap_sdp}
	\begin{array}{rl}
	\min  & \tr(A\otimes B) Y  \\
	\text{s.t.}  &  \langle J_{n^{2}},Y \rangle = n^{2} \\
	& \langle I_n \otimes (J_n-I_n) + (J_n-I_n) \otimes I_n,Y \rangle = 0 \\
	&  \langle I_n \otimes E_{ii}, Y \rangle = 1,  \forall i = 1,\ldots,n\\
	&  \langle E_{ii} \otimes I_n, Y \rangle = 1, \forall i = 1,\ldots,n\\
	& Y \succeq 0, Y \geq 0,  \quad (Y\in \DNNp)
	\end{array}
	\end{equation}
where
and \textdef{$E_{ii}=u_iu_i^T$}, where $u_i\in \R^n$ is $i$-th unit vector.
The authors  in~\cite[Theorem 7.1]{MR2546331} show that one can take
\index{$\aut(A)$, automorphism group of $A$}
\index{automorphism group of $A$, $\aut(A)$}
\begin{equation} \label{AGforQAP}
\A_{\GG} = {\A} _{{\aut}(A)} \otimes  {\A} _{{\aut}(B)},
\end{equation}
where  ${\aut}(A):= \{ P \in \Pi_n: AP=PA \}$ is the automorphism group of $A$.

\begin{remark}
\label{rem:DNNPovhRendl}
The \DNN relaxation~\cref{qap_sdp} is known to be theoretically
equivalent, yielding the same optimal value,
to the \DNN relaxation denoted (QAP$_{R3}$) in
Zhao et al.~\cite{KaReWoZh:94}.
The constraints $\langle I_n \otimes (J_n-I_n) + (J_n-I_n) \otimes I_n,Y \rangle = 0$
 are generally called the \textdef{gangster constraints}, see \Cref{qap_facial}.
The third and fourth lines of constraints in~\cref{qap_sdp} arise from the row and column sum constraints.

Recall that \textdef{$\svec$} is the linear transformation that vectorizes symmetric matrices,~\cite{KaReWoZh:94}.
We define \textdef{$\gsvec$ to do this vectorization of symmetric
matrices while ignoring the elements set to zero by the gangster
constraints.}
Then we can eliminate the gangster constraints
completely and replace the \DNN constraints  to get the equivalent
problem to~\cref{qap_sdp}:
\begin{equation}
\label{qap_sdpreduced}
	\begin{array}{rll}
	\min  & \gsvec(A\otimes B)^T y  \\
	\text{s.t.}  &
           \gsvec(J_{n^{2}})^Ty  = n^{2} \\
&  \gsvec( I_n \otimes E_{ii})^T y  = 1, & \forall i = 1,\ldots,n\\
&  \gsvec( E_{ii} \otimes I_n)^T y  = 1,& \forall i = 1,\ldots,n\\
	& \gsvec^*(y) \succeq 0,\, y \geq 0.
	\end{array}
	\end{equation}
This form is now similar to our final \SR reduced form before \FRp,
see~\cref{sdp_sys_reduced};
and this emphasizes that the \DNN can be represented in a split form.
\end{remark}

In the following lemma we derive the null space of the feasible
solutions of~\cref{qap_sdp}, see also Corollary 2.2.7 in~\cite{truetsch2014semidefinite}.

\begin{lemma}\label{qap_exp_lemma1}
Let $U := \frac{1}{\sqrt{n}}(nI-J) \in \R^{n \times n}$,
and let $Y$ be in the relative interior of the feasible set
of~\cref{qap_sdp}. Then
\[
\nul(Y) = \range\left(
\begin{bmatrix} U
\otimes e_{n} & e_{n} \otimes U \end{bmatrix}\right).
\]
	\end{lemma}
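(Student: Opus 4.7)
The plan is to establish the two inclusions separately. The inclusion $\range([U\otimes e_n,\,e_n\otimes U])\subseteq\nul(Y)$ will hold for every feasible $Y$, and the reverse inclusion will follow at a relative-interior point by a dimension count once I exhibit a feasible $\bar Y$ of rank exactly $n^2-2(n-1)$.

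For the first inclusion, I start from $\range(U)=e_n^\perp$, so the columns of $[U\otimes e_n,\,e_n\otimes U]$ span the direct sum $\{w\otimes e_n : w\perp e_n\}\oplus\{e_n\otimes w : w\perp e_n\}$, a subspace of dimension $2(n-1)$. Since $Y\succeq 0$, it suffices to show $(w\otimes e_n)^T Y (w\otimes e_n)=(e_n\otimes w)^T Y (e_n\otimes w)=0$ for every $w\perp e_n$. A direct expansion rewrites the first quadratic form as $w^T M w$, where
\[
M_{ik}\;:=\;\sum_{j,l}Y_{(i,j),(k,l)} \;=\; \bigl((I_n\otimes e_n^T)\,Y\,(I_n\otimes e_n)\bigr)_{ik}.
\]
The technical heart of the proof is to show $M=J_n$. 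I would use four facts: (i) $M$ is symmetric, PSD, and entrywise nonnegative, as a compression of $Y$; (ii) $M_{ii}=1$, because the constraint $\langle E_{ii}\otimes I_n,Y\rangle=1$ equals $M_{ii}$ after the gangster constraints kill the $j\neq l$ summands; (iii) $\sum_{i,k}M_{ik}=n^2$ from $\langle J_{n^2},Y\rangle=n^2$; and (iv) $M_{ik}\leq 1$ for $i\neq k$. For (iv), I would take the $2n\times 2n$ principal submatrix of $Y$ indexed by $\{(i,q)\}_q\cup\{(k,q)\}_q$; the gangster constraints force both $n\times n$ diagonal blocks to be diagonal with trace $1$ and force the diagonal of the $n\times n$ off-diagonal block to vanish. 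Testing positive semidefiniteness of this submatrix against the vector $(e_n^T,\,-e_n^T)^T$ yields $2-2M_{ik}\geq 0$. Combining (ii)--(iv), the $n(n-1)$ off-diagonal entries of $M$ sum to $n(n-1)$ while each is at most $1$, forcing each to equal $1$. Hence $M=J_n$ and $w^T M w=(w^T e_n)^2=0$ for $w\perp e_n$. The analogous argument, using $N_{qs}:=\sum_{p,r}Y_{(p,q),(r,s)}$ in place of $M$, handles the second type of vector $e_n\otimes w$.

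For the reverse inclusion I would exhibit the uniform convex combination $\bar Y:=\tfrac{1}{n!}\sum_{P\in\Pi_n}\text{vec}(P)\text{vec}(P)^T$, which is feasible because each $\text{vec}(P)\text{vec}(P)^T$ is. A direct probability computation---using $\Pr[P_{ij}{=}1]=1/n$ and $\Pr[P_{ij}{=}P_{kl}{=}1]=1/(n(n-1))$ for $i\neq k,\,j\neq l$---yields
\[
\bar Y \;=\; \tfrac{1}{n-1}\,P_c\otimes P_c \;+\; \tfrac{1}{n^2}\,J_{n^2},\qquad P_c\;:=\;I_n-J_n/n\;=\;U/\sqrt{n}.
\]
The two PSD summands have mutually orthogonal ranges $e_n^\perp\otimes e_n^\perp$ and $\spanl(e_n\otimes e_n)$, so $\rank(\bar Y)=(n-1)^2+1=n^2-2(n-1)$. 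The first step bounds the rank of every feasible $Y$ by $n^2-2(n-1)$, so any relative-interior $Y$ attains this rank; its $2(n-1)$-dimensional nullspace then coincides with the $2(n-1)$-dimensional subspace $\range([U\otimes e_n,\,e_n\otimes U])$ it already contains.

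The main obstacle I anticipate is the PSD argument giving $M_{ik}\leq 1$ in (iv). The remaining ingredients---feasibility of $\bar Y$, computing its rank via the orthogonal-range decomposition (using $P_c e_n=0$), and the final dimension count---are routine once the compression $M=(I_n\otimes e_n^T)Y(I_n\otimes e_n)$ has been set up.
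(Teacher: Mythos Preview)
Your proof is correct, but it takes a more laborious route than the paper for the first inclusion. The paper does not prove $\range([U\otimes e_n,\,e_n\otimes U])\subseteq\nul(Y)$ for \emph{every} feasible $Y$; instead it observes that for any permutation matrix $X$ one has $(U\otimes e_n)^T\mathrm{vec}(X)=U^T e_n=0$ and $(e_n\otimes U)^T\mathrm{vec}(X)=0$, so the inclusion holds for the barycenter $\hat Y=\frac{1}{n!}\sum_{X\in\Pi_n}\mathrm{vec}(X)\mathrm{vec}(X)^T$ directly. It then writes $\hat Y$ explicitly (your $\bar Y$), from which $\rank(\hat Y)=(n-1)^2+1$, and finishes by a submatrix argument that $\dim\range([U\otimes e_n,\,e_n\otimes U])=2(n-1)$. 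Your compression argument showing $M=(I_n\otimes e_n^T)Y(I_n\otimes e_n)=J_n$ for every feasible $Y$ is a genuinely different and more constraint-driven route; it has the advantage of being fully self-contained (in particular you \emph{prove} that $\bar Y$ lies in the relative interior, whereas the paper asserts it), at the cost of more work. One simplification: your step~(iv) is equivalent to the $2\times 2$ principal minor of $M$ itself, since $M\succeq 0$ with $M_{ii}=M_{kk}=1$ already gives $M_{ik}\le 1$; the detour through the $2n\times 2n$ principal submatrix of $Y$ is unnecessary.
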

	\begin{proof}
		Let $X \in \Pi_n$. Then $Xe_n = X^{T}e_n = e_n$, and thus
		$$\begin{array}{lr}
		(U \otimes e_{n})^T\text{vec}(X) = U^{T}e_n = 0,\\
		 (e_{n} \otimes U)^{T} \text{vec}(X) = U^{T}e_n = 0.
		\end{array}$$
Thus $\range\left(\begin{bmatrix} U \otimes e_{n} &
e_{n} \otimes U \end{bmatrix}\right) \subseteq \nul(\hat Y)$,
where
\[
\hat{Y} = \frac{1}{n!} \sum\limits_{X\in \Pi_n} {\rm vec}(X){\rm vec}(X)^T = \frac{1}{n^2}( J\otimes J ) + \frac{1}{n^2(n-1)}(nI-J)\otimes  ( nI-J ).
\]
It is proven in~\cite{truetsch2014semidefinite} that $\hat{Y}$ is
 in the relative interior of the feasible set of~\cref{qap_sdp}.
 Recall that every matrix $Y$ in the relative interior of a face has the same null space. This shows that $\nul(Y)  \supseteq \range\left(
	\begin{bmatrix} U
		\otimes e_{n} & e_{n} \otimes U \end{bmatrix}\right)$.

It remains to show that
\[
\dim \left( \range \left(
\begin{bmatrix} U\otimes e_{n} & e_{n} \otimes U \end{bmatrix}
\right)\right)
= 2(n-1).
\]
To see this, we choose the square submatrix of size $2n-1$ associated to
\[
\text{rows: } \{kn \;|\; k = 1,\ldots,n-1\} \cup \{ n(n-1)+1,\ldots,
n^2-1\};
\quad
\text{cols: } \{1,\ldots,n-1\} \cup \{n+1,\ldots,2n\}.
\]
It has the form
$$\frac{1}{\sqrt{n}}(nI-J) \in \mathcal{S}^{2(n-1)}.$$
This square submatrix clearly has rank $2(n-1)$, and thus the statement
follows.
	\end{proof}
Let us now derive an exposing vector of the \SDP
relaxation~\cref{qap_sdp} ignoring the nonnegativity, as we have
shown we can add the nonnegativity on after the reductions.
	\begin{lemma}\label{qap_exp_lem}
Consider~\cref{qap_sdp} without the nonnegativity constraints. Then
		\begin{equation}\label{qap_exp_inv2}
		\begin{array}{rl}
			W =  I_n \otimes nJ_n + J_n \otimes (nI_n-2J_n) \in
\A_{\GG} \subseteq \mathcal{S}^{n^{2}}_+,
		\end{array}
		\end{equation}
and is an exposing vector of rank $2 (n-1)$ in $\A_{\GG}$.
	\end{lemma}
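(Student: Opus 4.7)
The plan is to verify three things about $W$: (i) $W \in \A_\GG$; (ii) $W \succeq 0$ with $\rank(W) = 2(n-1)$; and (iii) $\langle W, Y\rangle = 0$ for every $Y$ feasible in \cref{qap_sdp}. Property (i) is immediate, since $I_n$ and $J_n$ are fixed under conjugation by any permutation matrix, so every Kronecker product summand of $W$ lies in $\A_{\aut(A)} \otimes \A_{\aut(B)} = \A_\GG$.

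For (ii), the key step is to rewrite $W$ using the orthogonal projections $P := \tfrac{1}{n}J_n$ (onto $\spanl\{e_n\}$) and $P^\perp := I_n - P$, so that $I_n = P + P^\perp$ and $J_n = nP$. Substituting into $W = n I_n\otimes J_n + n J_n\otimes I_n - 2 J_n\otimes J_n$ and collecting terms yields
$$
W \;=\; n^{2}\bigl(P^\perp \otimes P \;+\; P \otimes P^\perp\bigr).
$$
Each of $P^\perp\otimes P$ and $P\otimes P^\perp$ is an orthogonal projection, and their ranges $\range(P^\perp)\otimes\range(P)$ and $\range(P)\otimes\range(P^\perp)$ are mutually orthogonal subspaces of $\R^n\otimes\R^n$, since $\range(P)\perp\range(P^\perp)$. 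Hence $W$ is $n^2$ times a projection of rank $(n-1)+(n-1) = 2(n-1)$, which gives both $W\succeq 0$ and the claimed rank.

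For (iii), the plan is to use the linear constraints of \cref{qap_sdp} directly. Summing either family of assignment constraints gives $\tr(Y) = n$, so the gangster constraint rearranges to $\langle I_n\otimes J_n, Y\rangle + \langle J_n\otimes I_n, Y\rangle = 2n$, and the first constraint reads $\langle J_{n^2}, Y\rangle = \langle J_n\otimes J_n, Y\rangle = n^{2}$. Therefore
$$
\langle W,Y\rangle \;=\; n\bigl(\langle I_n\otimes J_n,Y\rangle + \langle J_n\otimes I_n,Y\rangle\bigr) - 2\langle J_n\otimes J_n,Y\rangle \;=\; n(2n) - 2n^{2} \;=\; 0.
$$
As a consistency check, since $U = \sqrt{n}\,P^\perp$ we have $\range(W) = \range[\,U\otimes e_n,\; e_n\otimes U\,]$, which by \Cref{qap_exp_lemma1} equals $\nul(\bar Y)$ for $\bar Y$ in the relative interior of the feasible set, confirming that $W$ exposes the minimal face. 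No substantive obstacle is anticipated: the one non-routine observation is spotting the projection factorization of $W$, after which positivity, rank, and the exposing identity fall out immediately.
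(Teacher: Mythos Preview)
Your proof is correct and essentially parallels the paper's. The projection identity $W = n^{2}(P^\perp\otimes P + P\otimes P^\perp)$ is exactly the paper's factorization $W = (UU^T)\otimes J + J\otimes(UU^T)$ rewritten (since $UU^T = nP^\perp$ and $J = nP$), and both yield positive semidefiniteness and the rank $2(n-1)$ immediately. The one small difference is in step~(iii): the paper obtains the exposing property by invoking \Cref{qap_exp_lemma1}, which identifies $\range(W)$ with $\nul(Y)$ for $Y$ in the relative interior, whereas you verify $\langle W,Y\rangle = 0$ directly from the linear constraints of~\cref{qap_sdp}. Your route is slightly more self-contained for the lemma as stated; the paper's route simultaneously certifies that $W$ exposes the \emph{minimal} face, which you recover only in your final consistency check.
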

	\begin{proof}
		Let $U$ be defined as in~\Cref{qap_exp_lemma1}. Using the properties of the Kronecker product, we have
		\begin{equation}\label{qap_exp_inv}
		\begin{array}{rl}
		0\preceq W= & \begin{bmatrix} U\otimes e_{n} & e_{n} \otimes U \end{bmatrix} \begin{bmatrix} U\otimes e_{n} & e_{n} \otimes U \end{bmatrix} ^{T} \\
		 = & (UU^{T}) \otimes J + J \otimes (UU^{T}) \\
		 = & (nI-J) \otimes J + J \otimes (nI-J) \\
		 = & I \otimes nJ + J \otimes (nI-2J),
		\end{array}
		\end{equation}
as $UU^{T} = nI-J$.  From~\Cref{qap_exp_lemma1}, we have $W$ is an exposing vector of rank $2(n-1)$.
Let $P$ be any permutation matrix of order $n$. Then $P^{T}(UU^{T})P = UU^{T}$ by construction.
We now have $(P_1\otimes P_2)^{T}W(P_1\otimes P_2) = W$, for any $P_1,P_2\in  \Pi_n$; and thus $W \in \A_{\GG}$.
	\end{proof}

In the rest of this section
we show how to do \FR for the symmetry reduced program
of~\cref{qap_sdp}. We continue to add on nonnegativity constraints to
\SDP relaxations as discussed above.
The facially reduced formulation of \cref{qap_sdp} is also presented  in~\cite{truetsch2014semidefinite}.
We state it here for later use.

\begin{lemma}[{\cite{truetsch2014semidefinite}}]\label{qap_facial}
The facially reduced program of the
\textdef{doubly nonnegative, \DNNp} \cref{qap_sdp} is given by
\index{\DNNp, doubly nonnegative}
	\begin{equation}\label{qap_sdp_facial}
	\begin{array}{cl}
\min & \left\langle  \left(V^{T} \left(A\otimes B \right) V \right),R\right\rangle \\
	\text{s.t.} 	& \left\langle V^{T}JV ,R \right\rangle = n^2 \\
	& \GG(VRV^{T}) = 0 \\
	& VRV^{T} \geq 0 \\
	& R  \in \mathcal{S}^{(n-1)^2+1}_+,
	\end{array}
	\end{equation}
where, by abuse of notation,
$\GG: \mathcal{S}^{n^2} \to \mathcal{S}^{n^2}$ is a linear operator  defined by $\GG(Y) := (J- (I\otimes(J-I) + (J-I)\otimes I) ) \circ Y$
\footnote{We use $\GG$ as the group and as a linear operator, usually referred to as the \emph{gangster operator}, since the meaning is clear from the context.
Here $ \circ $ denotes the Hadamard product.},
and
the columns of $V \in \R^{n^2 \times(n-1)^2+1}$ form a basis of
the null space of $W$, see~\Cref{qap_exp_lem}.
\end{lemma}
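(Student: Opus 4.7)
The plan is to apply the exposing-vector facial reduction of \Cref{sect:FRandexp} to \cref{qap_sdp}, using the exposing vector $W$ from \Cref{qap_exp_lem}, and then to invoke \Cref{redundant} to drop the redundant row- and column-sum equalities. By \Cref{qap_exp_lem} one has $W\succeq 0$ with $\rank(W)=2(n-1)$, so $\dim\nul(W) = n^{2}-2(n-1) = (n-1)^{2}+1$. I would choose $V\in\R^{n^{2}\times((n-1)^{2}+1)}$ whose columns form a basis of $\nul(W)$. Because $W$ exposes a face of $\Snp$ containing the feasible set of \cref{qap_sdp}, every feasible $Y$ satisfies $WY=0$; hence $\Range(Y)\subseteq\Range(V)$, and the constant-rank substitution from~\eqref{sdp_facialone} writes $Y=VRV^{T}$ for some $R\in\mathcal{S}_{+}^{(n-1)^{2}+1}$.

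Next I translate each constraint of \cref{qap_sdp} under this substitution. The objective becomes $\langle V^{T}(A\otimes B)V, R\rangle$; the total-sum equality becomes $\langle V^{T}JV, R\rangle = n^{2}$; and $Y\succeq 0$ reduces to $R\succeq 0$. For the equality $\langle I_{n}\otimes(J_{n}-I_{n})+(J_{n}-I_{n})\otimes I_{n}, Y\rangle = 0$, I exploit that the coefficient matrix is $\{0,1\}$-valued and $Y\geq 0$, so this single equality forces $Y$ to vanish on every entry indexed by the support of the coefficient matrix, which is exactly the condition $\GG(VRV^{T})=0$ for the gangster operator $\GG$. Componentwise nonnegativity $Y\geq 0$ becomes $VRV^{T}\geq 0$.

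The main obstacle is to verify that the $2n$ row- and column-sum equalities $\langle I_{n}\otimes E_{ii}, Y\rangle = \langle E_{ii}\otimes I_{n}, Y\rangle = 1$ become redundant under the substitution, so that they can be discarded via \Cref{redundant}. To argue this I would unpack \Cref{qap_exp_lem}: with $U = (1/\sqrt{n})(nI_{n}-J_{n})$, one has $W = \begin{bmatrix} U\otimes e_{n} & e_{n}\otimes U\end{bmatrix}\begin{bmatrix} U\otimes e_{n} & e_{n}\otimes U\end{bmatrix}^{T}$. Using $(U\otimes e_{n})^{T}\mathrm{vec}(X)=U^{T}X^{T}e_{n}$ and $(e_{n}\otimes U)^{T}\mathrm{vec}(X)=U^{T}Xe_{n}$ together with $\nul(U)=\spanl(e_{n})$, the rowspace of $W$ encodes exactly the row- and column-sum functionals on the rank-one atoms $\mathrm{vec}(X)\mathrm{vec}(X)^{T}$ generating the minimal face. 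Thus $Y=VRV^{T}$ already locks in these sum relations on the generators of the face, and once the total sum and the gangster condition are enforced, the $2n$ individual row- and column-sum equalities follow automatically. Applying \Cref{redundant} then eliminates them and yields \cref{qap_sdp_facial}.
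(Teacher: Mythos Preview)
The paper does not supply its own proof of this lemma; it is quoted from \cite{truetsch2014semidefinite} and followed only by the remark that the row- and column-sum constraints become redundant after facial reduction. So there is no ``paper's proof'' to compare against, and your outline---use the exposing vector $W$ from \Cref{qap_exp_lem}, substitute $Y=VRV^{T}$, translate the constraints, and argue the $2n$ row/column-sum equalities are redundant---is the right shape.

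Two concrete issues, however. First, your invocation of \Cref{redundant} is misplaced: that corollary transfers redundancy \emph{from} the facially reduced program \emph{to} the facially-and-symmetry reduced program; it does not establish redundancy inside the facially reduced program in the first place, which is what you need here. Second, the redundancy argument itself is not actually carried out. Saying that ``the rowspace of $W$ encodes exactly the row- and column-sum functionals on the rank-one atoms'' and that the $2n$ equalities therefore ``follow automatically'' once the trace and gangster constraints are imposed is an assertion, not a proof. The constraints $\langle I_{n}\otimes E_{ii},Y\rangle=1$ and $\langle E_{ii}\otimes I_{n},Y\rangle=1$ are not simply elements of $\range(W)$ paired with $Y$; you need to show, for any $R\succeq 0$ with $Y=VRV^{T}$ satisfying $\langle J,Y\rangle=n^{2}$ and $\GG(Y)=0$, that the $2n$ diagonal-block sums are each forced to equal $1$. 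The standard route (as in \cite{truetsch2014semidefinite} or \cite{KaReWoZh:94}) fixes a concrete $V$, e.g.\ $V=\begin{bmatrix}\tfrac{1}{n}e_{n^{2}} & \hat V\otimes\hat V\end{bmatrix}$ with $\hat V\in\R^{n\times(n-1)}$ satisfying $\hat V^{T}e_{n}=0$, and then computes $V^{T}(I_{n}\otimes E_{ii})V$ and $V^{T}(E_{ii}\otimes I_{n})V$ directly to see they are determined by the remaining constraints. Without that computation (or an equivalent one), the redundancy step is a gap.
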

Note that the constraints $\langle I \otimes E_{ii}, Y \rangle = 1$ and
$\langle E_{ii} \otimes I, Y \rangle = 1$ have become redundant after
\FR in \cref{qap_sdp_facial}.

\index{$\GG$, gangster operator}
\index{gangster operator, $\GG$}

We now discuss the symmetry reduced program.
The symmetry reduced formulation of~\cref{qap_sdp} is studied in~\cite{de2012improved}.
We assume that the the automorphism group of the matrix $A$ is non-trivial.
To simplify the presentation, we assume
\[
A = \sum_{i=0}^{d}a_{i}A_{i},
\] where $\{A_{0},\ldots,A_{d}\}$ is the basis of the commutant of the automorphism group of $A$.
For instance the matrices $A_i$ ($i=0,1,\ldots,d$) may form a basis of
the Bose-Mesner algebra of the  Hamming scheme, see~\Cref{sec:Hamming}.
Further, we assume from now on that $A_0$ is a diagonal matrix, which is the case for  the Bose-Mesner algebra of the  Hamming scheme.
Here, we do not assume any structure in $B$. However the theory applies
also when $B$ has some symmetry structure
and/or $A_0$ is not diagonal; see our numerical tests for the minimum cut
problem in~\Cref{sec_mc}, below.

If the \SDP~\cref{qap_sdp} has an optimal solution $Y \in
\mathcal{S}^{n^2}_+$, then it has an optimal solution of the form $Y =\sum_{i=0}^{d} A_{i} \otimes Y_{i}$ for some matrix variables
$Y_{0},\ldots,Y_{d} \in \R^{n \times n}$, see~\eqref{AGforQAP} and \Cref{sec_group}.
We write these matrix variables in a more compact way as $y =
(\text{vec}(Y_{0}),\ldots,\text{vec}(Y_{d}))$, if necessary. Denote by
$\BBts_{k}(y) \in \mathcal{S}^{n_{k}}_+$ the $k$-th block of the block-diagonal matrix
\begin{equation}\label{qap_By}
\BBts(y) :=(Q \otimes I)^{T} Y (Q \otimes I) = \sum_{i=0}^{d}(Q^{T}A_{i}Q) \otimes Y_{i},
\end{equation}
where $Q$ is the orthogonal matrix block-diagonalizing $A_i$ ($i=0,\ldots, d$).

\begin{lemma}\label{lem_qap_sdp_s}
	The symmetry reduced program of the \DNN relaxation~\cref{qap_sdp} is given by
	\begin{equation}\label{qap_sdp_s}
	\begin{array}{cl}
	\min &  \sum_{i=0}^{d} a_{i}    \trace  (A_{i}A_{i})  \trace (BY_{i}) \\[1ex]
	\text{s.t.} & \sum_{i=0}^{d}  \trace (J A_{i})     \trace (J Y_{i} ) = n^2  \\[1ex]
            & \offDiag(Y_{0}) = 0 \\[1ex]
             & \diag(Y_{i}) = 0,\, i = 1,\ldots,d  \\[1ex]
		&   \diag (Y_{0}) =  \frac{1}{n}e_n \\[1ex]
	& Y_{j} \geq 0, j = 0,\ldots,d \\[1ex]
	& \BBts_{k}(y) \in \mathcal{S}^{n_{k}}_+, k = 1,\ldots,t,
	\end{array}
	\end{equation}
where $\BBts_{k}(y)$ is the $k$-th block from~\cref{qap_By}, and
\textdef{$\offDiag(Y_0)=0$}
is the linear constraints that the off-diagonal elements are zero.
\end{lemma}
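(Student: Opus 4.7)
The strategy is a direct substitution argument. By \Cref{thmAlg} (and \Cref{rem:complxDNN} for the \DNN case), we may restrict the feasible set of~\cref{qap_sdp} to the commutant $\A_{\GG} = \A_{\aut(A)}\otimes \A_{\aut(B)}$. Since \SR is performed only with respect to $\aut(A)$, every feasible solution may be taken in the form $Y = \sum_{i=0}^d A_i\otimes Y_i$ with $Y_i \in \R^{n\times n}$, and the task reduces to rewriting each constraint and the objective of~\cref{qap_sdp} in terms of the matrix variables $Y_0,\ldots,Y_d$.

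For the objective and the trace constraint I would apply the identity $\langle P\otimes Q,\, R\otimes S\rangle = \tr(P^TR)\tr(Q^TS)$ together with the orthogonality $\tr(A_i A_j)=0$ for $i\neq j$ (which holds since the basis matrices of a coherent configuration have pairwise disjoint supports). Writing $A=\sum_j a_j A_j$ then collapses $\langle A\otimes B,\, Y\rangle$ to $\sum_{i=0}^{d} a_i\tr(A_iA_i)\tr(BY_i)$, and $\langle J_{n^2},Y\rangle = n^2$ becomes $\sum_i \tr(JA_i)\tr(JY_i) = n^2$.

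The gangster constraint is the one requiring the most care, and is the main obstacle. One checks that $(I\otimes(J-I)+(J-I)\otimes I)_{(i,j),(k,l)}$ is nonzero exactly when either $i=k,\ j\ne l$ or $i\ne k,\ j=l$. Since both this coefficient matrix and $Y$ are entrywise nonnegative (here the \DNN hypothesis is essential), the scalar equality $\langle\cdot,Y\rangle=0$ forces $Y_{(i,j),(k,l)}=0$ on precisely these indices. Expanding $Y_{(i,j),(k,l)} = \sum_m (A_m)_{ik}(Y_m)_{jl}$ and exploiting that $A_0=I$ has unit diagonal while each $A_m$ ($m\ge 1$) has zero diagonal and a zero-one off-diagonal support partitioning the off-diagonal index pairs, the case $i=k,\ j\ne l$ yields $\offDiag(Y_0)=0$, and the case $i\ne k,\ j=l$ yields $\diag(Y_m)=0$ for each $m\ge 1$. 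The column-sum constraints $\langle I\otimes E_{ii},Y\rangle=1$ collapse via $(A_0)_{ii}=1, (A_m)_{ii}=0$ for $m\ge 1$ to $n(Y_0)_{ii}=1$, giving $\diag(Y_0)=\tfrac{1}{n}e_n$; and the row-sum constraints then become redundant since they reduce to $\tr(Y_0)=1$, already implied.

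It remains to translate the cone constraints. The pointwise nonnegativity $Y\ge 0$ decouples into $Y_j\ge 0$ for every $j$, because the supports of the $A_m$ partition the index pairs, so each entry of $Y$ is a single entry of one of the $Y_m$. For the semidefiniteness, I would invoke the Wedderburn decomposition (\Cref{wedd}) with the orthogonal $Q$ that block-diagonalizes $\{A_i\}_{i=0}^d$ simultaneously: the congruence $Y\mapsto (Q\otimes I)^T Y (Q\otimes I)=\sum_i (Q^T A_i Q)\otimes Y_i$ is block diagonal, and $Y\succeq 0$ is equivalent to $\BBts_k(y)\succeq 0$ for each block $k=1,\ldots,t$, precisely as in~\cref{sdp_sys_reduced}. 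Collecting these reformulations yields~\cref{qap_sdp_s}.
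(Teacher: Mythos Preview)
Your proof is correct and complete; the paper itself gives no argument here, simply citing \cite{truetsch2014semidefinite,MR2546331}, and your direct substitution is exactly the standard computation one finds in those references. One small remark: your treatment of the gangster constraint correctly relies on $Y\geq 0$ to upgrade the single trace equality to entrywise vanishing, and your implicit assumption $A_0=I$ (rather than merely ``$A_0$ diagonal'') is indeed what the lemma as stated requires, so you may want to make that explicit.
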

\begin{proof}
	See e.g.,~\cite{truetsch2014semidefinite,MR2546331}.
\end{proof}

It remains to facially reduce the symmetry reduced program~\cref{qap_sdp_s}.
Note that  $W \in A_{\GG}$ can be written as $W = \sum_{i=0}^{d}
A_{i} \otimes W_{i}$, for some  matrices $W_{0},\ldots,W_{d} \in \R^{n \times n}$.
\Cref{thm:exposeG2} shows that the block-diagonal matrix
\begin{equation} \label{Wtilde}
\widetilde{W}:=(Q\otimes I)^{T}W(Q \otimes I) =  \sum_{i=0}^{d} (Q^{T}A_{i}Q) \otimes W_{i}
\end{equation}
is an exposing vector of the symmetry reduced program~\cref{qap_sdp_s}.
Further, we denote by $\widetilde{W}_k$ ($k=1,\ldots, t$) the $k$-th block of $\widetilde{W}$.
Let us illustrate this with~\Cref{qap_ham_ex}.
\begin{example}\label{qap_ham_ex}
Consider~\Cref{sec:Hamming}, where $A_i$ ($i=0,\ldots,d$) form a basis of the Bose-Mesner algebra of the Hamming scheme.
Then,  the exposing vector $W \in \mathcal{S}^{n^{2}}_+$ defined in
\Cref{qap_exp_lem} can be written as  $W = \sum_{i=0}^{d} A_{i} \otimes
W_{i}$,  where
\begin{equation}\label{qap_w_decomp}
W_{0} = (n-2)J + nI \text{ and } W_{i}  = nI_{n} - 2J \text{ for } i = 1,\ldots,d.
\end{equation}
Let $\widetilde{W}_{k} \in \mathcal{S}^{n}$ be the $k$-th block of $\widetilde{W}$, see~\cref{Wtilde}.
Then there  are $d+1$ distinct blocks given by $\widetilde{W}_{k} =
\sum_{i=0}^{d}p_{i,k}W_{i} \in \mathcal{S}^{n}$  for $k = 0,\ldots,d$,
where $p_{i,k}$ are elements in the character table $P$ of the Hamming scheme, see~\Cref{sec:Hamming}.
Using the fact that $Pe = (n,0,\ldots,0)^{T}$ and $p_{1,k} = 1$, for every $k = 0,\ldots,d$, we have
	\begin{equation}\label{qap_what_decomp}
	\widetilde{W}_{0} = n^{2}I - nJ  \text{ and } \widetilde{W}_{k} = nJ  \text{ for } k = 1,\ldots,d,
	\end{equation}
and the matrices $\tilde{V}_{k}$, whose columns form a basis of the null space of $\widetilde{W}_{k} \in \mathcal{S}^{n}$, are given by
\begin{equation}\label{qap_ham_Vhat}
\tilde{V}_{0} = e_n  \text{ and } \tilde{V}_{k} = \begin{bmatrix}
I_{n-1} \\
-e^T_{n-1}
\end{bmatrix} \in \R^{n \times (n-1)} \text{ for } k = 1,\ldots,d.
\end{equation}
\end{example}

Similar results can be derived when one uses different groups.
Now we are ready to present an \SDP relaxation for the \QAP that
is both facially and symmetry reduced.

\begin{prop}\label{lem_qap_fs}
	The facially reduced program of the symmetry reduced  \DNN  relaxation~\cref{qap_sdp_s} is given by
	\begin{equation}\label{qap_sdp_fs}
	\begin{array}{cl}
	\min &  \sum_{i=1}^{d} a_{i}    \trace  (A_{i}A_{i})  \trace (BY_{i}) \\[1ex]
	\text{s.t.} & \sum_{i=0}^{d}  \trace (J A_{i})     \trace (J Y_{i} ) = n^2 \\[1ex]
	& \offDiag(Y_{0}) = 0 \\ [1ex]
	& \diag(Y_{i}) = 0, i = 1,\ldots,d  \\	[1ex]
	& Y_{j} \geq 0, j = 0,\ldots,d \\[1ex]
	& \BBts_{k}(y) = \tilde{V}_{k}\tilde{R}_{k}\tilde{V}_{k}^{T},  k = 1,\ldots,t\\[1ex]
	& \tilde{R}_{k} \in \mathcal{S}^{n_{k}^\prime}_+, k = 1,\ldots,t.
	\end{array}
	\end{equation}
	Here, the columns of $\tilde{V}_{k} \in \R^{n_k \times
n_{k}^\prime}$ form a basis of the
 null space of the $\widetilde{W}_{k} \in \mathcal{S}^{n}$.
\end{prop}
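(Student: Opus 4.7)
The plan is to apply the symmetry and facial reduction framework developed in~\Cref{sect:FRandexp,sect:newDNN} directly to the QAP-specific data, using the exposing vector $W$ from~\Cref{qap_exp_lem} and its block-diagonalized counterpart $\widetilde{W}$ from~\cref{Wtilde}. First I would invoke \Cref{qap_exp_lem} to observe that $W\in A_{\GG}$ is an exposing vector of rank $2(n-1)$ for the minimal face of $\Snp[n^2]$ containing the feasible set of the semidefinite part of~\cref{qap_sdp_s}. Then \Cref{thm:exposeG2} (together with the orthogonal transformation $Q\otimes I$ from~\cref{qap_By}) produces the block-diagonal exposing vector $\widetilde{W} = \Blkdiag(\widetilde{W}_1,\ldots,\widetilde{W}_t)$ of the minimal face of the symmetry reduced feasible set $\SSx$. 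Taking $\tilde{V}_k$ to be a basis of $\Null(\widetilde{W}_k)$, the substitution $\BBts_k(y)=\tilde{V}_k\tilde{R}_k\tilde{V}_k^T$ for each $k$ yields the facially reduced form, exactly mirroring the passage from~\cref{sdp_sys_reduced} to~\cref{sdp_sys_facial}. The nonnegativity $Y_j\geq 0$ carries through unchanged by the argument of~\Cref{sect:newDNN}, since the nonnegativity was moved onto the block variables $Y_j$ independently of the semidefinite facial reduction.

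Second, I would account for the disappearance of the constraint $\diag(Y_0)=\tfrac{1}{n}e_n$ and of the $i=0$ term from the objective by a redundancy argument in the spirit of \Cref{redundant}. The Hamming-type computation in~\Cref{qap_ham_ex} (and its analogue for a general group with $A_0=I$) gives $\tilde{V}_0 = e_n$ (up to normalization), so the first block constraint $\BBts_0(y)=\tilde{V}_0\tilde{R}_0\tilde{V}_0^T$ forces $\BBts_0(y)$ to be a rank-one multiple of $e_ne_n^T$. Combined with the remaining $\offDiag(Y_0)=0$ constraint and the normalization $\langle J,\BBs(y)\rangle = n^2$, this forces $Y_0=\tfrac{1}{n}I$ at feasibility, so that $\diag(Y_0)=\tfrac{1}{n}e_n$ becomes redundant once $\BBts_0(y)=\tilde{V}_0\tilde{R}_0\tilde{V}_0^T$ is imposed. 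Consequently $\trace(BY_0)=\tfrac{1}{n}\trace(B)$ is a constant, and the $i=0$ summand in the objective contributes only an additive constant that may be discarded, reducing $\sum_{i=0}^d$ to $\sum_{i=1}^d$.

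Third, I would verify that all other constraints of~\cref{qap_sdp_s} persist verbatim in~\cref{qap_sdp_fs}: the total sum constraint $\sum_{i=0}^d\trace(JA_i)\trace(JY_i)=n^2$, the off-diagonal constraint $\offDiag(Y_0)=0$, and the diagonal constraints $\diag(Y_i)=0$ for $i\geq 1$ are all linear in $y$ and unaffected by the semidefinite substitution. Similarly the nonnegativity $Y_j\geq 0$ remains. Finally I would close the loop by noting that any feasible $\tilde{R}_k\succeq 0$ produces a feasible $y$ via $\BBts_k(y)=\tilde{V}_k\tilde{R}_k\tilde{V}_k^T$, and conversely any feasible $y$ for~\cref{qap_sdp_s} yields $\tilde{R}_k := (\tilde{V}_k^T\tilde{V}_k)^{-1}\tilde{V}_k^T\BBts_k(y)\tilde{V}_k(\tilde{V}_k^T\tilde{V}_k)^{-1}\succeq 0$, establishing the equivalence.

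The main obstacle will be the redundancy argument in the second paragraph: one must check carefully that, for a \emph{general} commutant (not only Hamming), the choice $A_0=I$ and the rank-one structure of $\widetilde{W}_0$ combine to force $\tilde{V}_0$ to align with $e_n$ and thereby render $\diag(Y_0)=\tfrac{1}{n}e_n$ redundant. This is a structural fact about the exposing vector $W=I\otimes nJ+J\otimes(nI-2J)$ rather than a direct consequence of \Cref{thm:exposeG2}, and it is what allows the objective to be truncated to $i\geq 1$; everything else is a straightforward specialization of the general reduction pipeline to the QAP data.
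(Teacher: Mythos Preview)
Your proposal is correct and follows the paper's proof closely in its first step: both apply \Cref{thm:exposeG2} to the exposing vector $W\in A_{\GG}$ of \Cref{qap_exp_lem}, obtain the block-diagonal $\widetilde W$ via~\cref{Wtilde}, and substitute $\BBts_k(y)=\tilde V_k\tilde R_k\tilde V_k^T$ with $\tilde V_k$ spanning $\Null(\widetilde W_k)$. The difference lies in how the constraint $\diag(Y_0)=\tfrac1n e_n$ is dropped. You propose to verify its redundancy \emph{directly} from the explicit Hamming computation $\tilde V_0=e_n$ of \Cref{qap_ham_ex}, and you rightly flag that extending this to a general commutant is the main obstacle. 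The paper instead invokes \Cref{redundant} together with \Cref{qap_facial}: since the constraints $\langle I\otimes E_{ii},Y\rangle=1$ and $\langle E_{ii}\otimes I,Y\rangle=1$ are already known to be redundant in the facially reduced (non-symmetry-reduced) problem~\cref{qap_sdp_facial}, \Cref{redundant} transfers that redundancy automatically to the symmetry-reduced setting, with no need to know the explicit $\tilde V_k$ or to assume anything Hamming-specific. This neatly dissolves the obstacle you identify. Your explicit route has the virtue of also explaining the truncation of the objective to $i\ge 1$ (a point the paper's proof leaves implicit), but the paper's abstract route is shorter and fully general.
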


\begin{proof}
	Applying~\Cref{thm:exposeG2} to the block-diagonal matrix
$(Q\otimes I)^{T}W(Q \otimes I) =  \sum_{i=0}^{d} (Q^{T}A_{i}Q) \otimes W_{i}$,
the matrices $\widetilde{W}_{k}$ are the exposing vectors of the symmetry reduced program~\cref{qap_sdp_s}, and thus
$\widetilde{W}_{k} \BBts_{k}(y)= 0$ for every $k = 1,\ldots,t$.
This means that there exists a full column rank matrix $\tilde{V}_{k}
\in \R^{n_k \times n_{k}^\prime} $ such that
$\BBts_{k}(y) = \tilde{V}_{k}\tilde{R}_{k}\tilde{V}_{k}^{T}$, where
$\tilde{R}_{k} \in \mathcal{S}^{n_{k}^\prime}_+$ for every $k = 1,\ldots,t$.
Finally, we apply~\Cref{redundant} to remove
redundant constraints, see also~\Cref{qap_facial}. This yields the
formulation~\cref{qap_sdp_fs}.
\end{proof}

Note that in the case that the basis elements $A_i$ ($i=0,\ldots, d$)
belong to the Hamming scheme, see~\Cref{qap_ham_ex},
it follows that $t=d+1$ in the above~\Cref{lem_qap_fs}.

\subsubsection{On solving \QAP with \ADMM}
\label{sect:solvqapadmm}
Now we discuss how to  use \ADMM to solve the \DNN relaxation~\cref{qap_sdp_fs},
for the particular case when $A_i$ ($i=0,1,\ldots,d$) form a basis of
the Bose-Mesner algebra of the  Hamming scheme.
We proceed as in~\Cref{sec_admm}, and exploit properties of the
known algebra, see~\Cref{sec:Hamming}.
Clearly, for any other algebra we can proceed in a similar way.
We assume without loss of generality that all the matrices
$\tilde{V}_{j}$ in this section have orthonormal columns.

First, we derive the  equivalent reformulation of the \DNN relaxation~\cref{qap_sdp_fs}, by exploiting the following.
\begin{enumerate}
	\item[(1)] Since we remove the repeating blocks of
positive semidefinite constraints,  to apply \ADMM we have to reformulate
the \DNN in such a way that~\Cref{psim_ass} is satisfied.
 Let us first  derive an expression for the objective function as follows.
	$$\begin{array}{rl}
	\trace ((A \otimes B) Y) & = \trace \big( (Q \otimes I)^{T}(\sum_{i=0}^{d}a_{i}A_{i} \otimes B) (Q \otimes I) (Q \otimes I)^{T}(\sum_{j=0}^{d} A_{j} \otimes Y_{j}) (Q \otimes I)\big) \\
	& =   \trace \bigg( \big( \sum_{i=0}^{d}(Q^{T}a_{i}A_{i}Q) \otimes B \big) \big( \sum_{j=0}^{d} (Q^{T}A_{j}Q) \otimes Y_{j} \big) \bigg) \\
	&=  \sum_{k=0}^{d} \mu_{k}   \trace  \big((\sum_{i=0}^{d} a_{i}  p_{i,k}B) ( \sum_{j=0}^{d} p_{j,k}Y_{j}) \big)  \\[1.5ex]
	& = \sum_{k=0}^{d} \langle \tilde{C}_{k}, \sqrt{\mu_{k}} \sum_{i=0}^{d} p_{i,k}Y_{i} \rangle,
	\end{array}$$
	where $\tilde{C}_{k} :=  \sqrt{\mu_{k}} (\sum_{i=0}^{d} a_{i}   p_{i,k}) B$.  Recall
	that $\mu = (\mu_{k}) \in \R^{d+1}$, with $\mu_k := {d \choose k}(q-1)^{k}$.
Then,  we multiply the coupling constraints $\BBts_{i}(y) = \tilde{V}_{i}\tilde{R}_{i}\tilde{V}_{i}^{T}$ by the square root of its multiplicities. Thus, for the Bose-Mesner algebra, we end up with $\sqrt{\mu_{j}}(\sum_{i=0}^{d}p_{i,j}Y_{i} -  \tilde{V}_{j}\tilde{R}_{j}\tilde{V}_{j}^{T}) = 0$.
	
\item[(2)] In many applications, it is not necessary to compute
high-precision solutions, and the \ADMM can be terminated at any
iteration. Then, one can use the dual variable
$\tilde{Z}_{j}$ from the current iteration to compute a valid lower
bound, see~\Cref{dual_bound}. By adding redundant constraints, this
lower bound is improved significantly when the \ADMM is terminated with low-precision.
Therefore we add the following  redundant constraints
\begin{equation}
Y_{0} = \frac{1}{n} I, \quad \trace(\tilde{R}_{j}) = \sqrt{\mu_{j}} p_{0,j} \text{ for } j = 0,\ldots,d.
\end{equation}
To see the redundancy of the last $d+1$ constraints above, we use the
fact that the columns of $\tilde{V}_{j}$ are orthonormal, and that
$\diag(Y_{i}) = 0, i=1,\ldots,d$, to derive
$$\trace(\tilde{R}_{j}) = \trace( \tilde{V}_{j}\tilde{R}_{j}\tilde{V}_{j}^{T}) = \trace\sqrt{\mu_{j}}(\sum_{i=0}^{d}p_{i,j}Y_{i} ) = \sqrt{\mu_{j}} p_{0,j}.$$
 This technique can also be found in~\cite{LiPongWolk:19,HaoWangPongWolk:14, hu2019solving,OliveiraWolkXu:15}. \label{ADMM_ingr}
\end{enumerate}
  We would like to emphasize that the techniques above are
not restricted to the Bose-Mesner algebra of the Hamming scheme.
Let us  present our reformulated \DNN relaxation for \ADMM. Define
\begin{equation}\label{admm_p}
\begin{array}{l}
\mathcal{P}:= \left\{ (Y_{0},\ldots,Y_{d}) \;|\;  \sum_{i=0}^{d}  {d
\choose i}(q-1)^{i}q^{d}  \trace (JY_{i}) = n^2,\;  \right.
\\\left. \qquad\qquad  \qquad \qquad Y_{0} = \frac{1}{n}
I,\; \diag(Y_{i}) = 0, Y_{j} \geq 0,\; i = 1,\ldots,d\right\},
\end{array}
\end{equation}
and
\begin{equation}\label{admm_R}
\mathcal{\tilde{R}}:= \{ (\tilde{R}_{0},\ldots,\tilde{R}_{d}) \;|\;  \trace(\tilde{R}_{j}) = \sqrt{\mu_{j}} p_{0,j},\; \tilde{R}_{i} \in \mathcal{S}^{n}_+, \; i = 0,\ldots,d\}.
\end{equation}
We obtain the following \DNN  relaxation for our \ADMM.
\begin{equation}\label{admm_qap}
\begin{array}{ccl}
p^{*}:=& \min &  \sum_{j=0}^{d} \langle \tilde{C}_{j},   \sqrt{\mu_{j}}\sum_{i=0}^{d} p_{i,j}Y_{i} \rangle  \\[1ex]
& \text{s.t.} & (Y_{0},\ldots,Y_{d}) \in \mathcal{P} \\[1ex]
& & (\tilde{R}_{0},\ldots,\tilde{R}_{d}) \in \mathcal{R}  \\[1ex]
& & \sqrt{\mu_{j}}(\sum_{i=0}^{d}p_{i,j}Y_{i} -  \tilde{V}_{j}\tilde{R}_{j}\tilde{V}_{j}^{T}) = 0,  j = 0,\ldots,d. \\[1ex]
\end{array}
\end{equation}

The augmented Lagrangian is
$$
\begin{array}{l}
{\LL}(\tilde{Y},\tilde{R},\tilde{Z}):=\sum_{j=0}^{d} \Big(  \langle \tilde{C}_{j},   \sqrt{\mu_{j}}\sum_{i=0}^{d} p_{i,j}Y_{i} \rangle  +   \langle \tilde{Z}_{j},  \sqrt{\mu_{j}}(\sum_{i=0}^{d}p_{i,j}Y_{i} -  \tilde{V}_{j}\tilde{R}_{j}\tilde{V}_{j}^{T}) \rangle
\\ \qquad \qquad \qquad  \qquad \qquad \qquad    \qquad +  \frac{\beta}{2} ||\sqrt{\mu_{j}}(\sum_{i=0}^{d}p_{i,j}Y_{i} - \tilde{V}_{j}\tilde{R}_{j}\tilde{V}_{j}^{T}) ||^{2} \Big).
\end{array}
$$
The ${Y}$-subproblem, the $\tilde{R}$-subproblem and the dual update are represented below.

\begin{enumerate}
	\item The ${Y}$-subproblem:
	\begin{equation}\label{qap_Ysub}
	\begin{array}{cl}
	\min & \sum_{j=0}^{d}||\sqrt{\mu_{j}} \sum_{i=0}^{d}p_{i,j}Y_{i} - \sqrt{\mu_{j}} \tilde{V}_{j}\tilde{R}_{j}\tilde{V}_{j}^{T} + \frac{\tilde{C}_{j}+ \tilde{Z}_{j}}{\beta}||^{2} \\[1.5ex]
	\text{s.t.} & Y_{0} = \frac{1}{n}I\\[1.5ex]
	& \diag(Y_{i}) = 0, ~i = 1,\ldots,d  \\[1.5ex]
	& \sum_{i=0}^{d}  {d \choose i}(q-1)^{i}q^{d}  \trace (JY_{i}) = n^2 \\[1.5ex]
	& Y_{i} \geq 0, ~i = 0,\ldots,d.
	\end{array}
	\end{equation}

	\item The $\tilde{R}$-subproblems, for $j = 0,\ldots,d$:
	\begin{equation}\label{qap_Rsub}
	\begin{array}{cl}
	\min & ||\tilde{R}_{j} - \tilde{V}_{j}^{T}(\sum_{i=0}^{d}p_{i,j}Y_{i} + \frac{\tilde{Z}_{j}}{\beta \sqrt{\mu_{j}}})\tilde{V}_{j}||^{2}  \\
	\text{s.t.} &  \tilde{R}_{j} \in \mathcal{S}^{n_{j}^\prime}_+.
	\end{array}
	\end{equation}
	
	\item Update the dual variable:
	\begin{equation}\label{qap_Zupdate}
	\tilde{Z}_{j} \leftarrow \tilde{Z}_{j} + \gamma \beta \sqrt{\mu_{j}} (\sum_{i=0}^{d}p_{i,j}Y_{i} - \tilde{V}_j\tilde{R}_j\tilde{V}_j^{T}), \quad j = 0,\ldots,d.
	\end{equation}
\end{enumerate}

Clearly, the $\tilde{R}$-subproblems can be solved in the same way as
\cref{admm_Rsub}.
To see that the ${Y}$-subproblem can also be solved efficiently, let us
show that it is a problem of the form~\cref{psim_def}, and thus
satisfies~\Cref{psim_ass}.

Let $\lambda_{j} = (p_{0,j},\ldots,p_{d,j})^{T} $,
$$y = \begin{bmatrix}
\text{vec}(Y_{0})\\
\vdots \\
\text{vec}(Y_{d})\\
\end{bmatrix} \text{ and } \hat{y} = \begin{bmatrix}
\text{vec}(\sqrt{\mu_{0}} \tilde V_{0} \tilde R_{0}\tilde V_{0}^{T} - \frac{\tilde{C}_{0} + \tilde Z_{0}}{\beta}) \\
\vdots \\
\text{vec}(\sqrt{\mu_{d}} \tilde V_{d} \tilde R_{d} \tilde V_{d}^{T} - \frac{\tilde{C}_{d} + \tilde Z_{d}}{\beta})
\end{bmatrix}.$$
Define the linear transformation ${\TT} ^*: \R^{(d+1)n^2} \rightarrow \R^{(d+1)n^2}$ by
$${\TT} ^*(y) = \begin{bmatrix}
\sqrt{\mu_{0}} \big( \lambda_{0}^{T} \otimes I_{n^{2}} \big) \\
\vdots \\
\sqrt{\mu_{d}} \big( \lambda_{d}^{T} \otimes I_{n^{2}} \big) \\
\end{bmatrix}y.$$

\begin{lemma}\label{qap_admm_ass}
	The ${Y}$-subproblem~\cref{qap_Ysub} is equivalent to the following projection to the weighted simplex problem
	\begin{equation}\label{qap_Ysub2}
	\begin{array}{cl}
	\min & ||{\TT} ^*(y) - \hat{y}||^{2} \\
	\text{s.t.} & y_{i} = 0, i \in \mathcal{I}\\
	& w^{T}y = n^2 \\
	& y \geq 0,
	\end{array}
	\end{equation}
	where $w := q^{d} (\mu \otimes e_{n^{2}}) \in \R^{(d+1)n^2}$, and $\mathcal{I}$ contains the indices of $y$ associated to
	the off-diagonal entries of $Y_{0}$. Furthermore, the problem
	\cref{qap_Ysub2} satisfies~\Cref{psim_ass}.
\end{lemma}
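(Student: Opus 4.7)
The plan is to match the two formulations objective-first then constraint-by-constraint, and then verify Assumption~\ref{psim_ass} by a direct calculation whose only nontrivial step is the Krawtchouk orthogonality \eqref{orth_kraw}.

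First I would rewrite the objective of \cref{qap_Ysub} in vectorized form. Using the identity $\text{vec}\!\left(\sum_{i=0}^d p_{i,j} Y_i\right) = (\lambda_j^T \otimes I_{n^2})y$, the $j$-th block of $\TT^*(y)$ is exactly $\sqrt{\mu_j}\,\text{vec}\!\left(\sum_{i=0}^d p_{i,j} Y_i\right)$, so by the definition of $\hat y$ the $j$-th block of $\TT^*(y)-\hat y$ is the vectorization of the matrix inside the $j$-th summand of \cref{qap_Ysub}. Summing squared Euclidean norms block by block recovers the Frobenius-norm objective of \cref{qap_Ysub}, so the two objectives agree.

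Next I would match the linear constraints. The conditions $Y_0 = \tfrac{1}{n}I$ and $\diag(Y_i)=0$ for $i\geq 1$ are encoded by fixing the appropriate coordinates of $y$ through the index set $\II$ (and by building the fixed values into $\hat y$). The normalization constraint rewrites as $w^Ty = n^2$ because
\[
w^T y = q^d\sum_{i=0}^d \mu_i\, e_{n^2}^T\,\text{vec}(Y_i) = q^d\sum_{i=0}^d \mu_i\,\trace(JY_i),
\]
which equals the left-hand side of the normalization in \cref{qap_Ysub} since $\mu_i = \binom{d}{i}(q-1)^i$. Nonnegativity $Y_i\geq 0$ transfers to $y\geq 0$ directly.

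Finally, to verify Assumption~\ref{psim_ass}, I would write $\TT^*$ as a block matrix whose $(j,i)$-block is $\sqrt{\mu_j}\,p_{i,j}\,I_{n^2}$; its adjoint $\TT$ then has $(i,j)$-block $\sqrt{\mu_j}\,p_{i,j}\,I_{n^2}$. Composing yields
\[
\TT(\TT^*(y)) = \left[\left(\sum_{j=0}^d \mu_j\,\lambda_j\lambda_j^T\right)\otimes I_{n^2}\right] y.
\]
The $(r,s)$-entry of the inner sum is $\sum_{j=0}^d \mu_j\, p_{r,j}\, p_{s,j}$, which by the Krawtchouk orthogonality relation \eqref{orth_kraw} equals $q^d\,\mu_s\,\delta_{r,s}$. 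Hence $\sum_j \mu_j\lambda_j\lambda_j^T = q^d\Diag(\mu)$ and therefore $\TT\circ\TT^* = \bigl(q^d\Diag(\mu)\bigr)\otimes I_{n^2} = \Diag(w)$ with $w = q^d(\mu\otimes e_{n^2})>0$, which is exactly Assumption~\ref{psim_ass}.

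The main obstacle is the computation $\TT\circ\TT^* = \Diag(w)$; everything else is bookkeeping of vectorizations and Kronecker products. Fortunately \eqref{orth_kraw} is tailor-made to produce the diagonal structure once $\TT\circ\TT^*$ is cast in Kronecker form, so this step goes through cleanly.
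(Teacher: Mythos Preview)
Your proposal is correct and follows essentially the same approach as the paper: the paper simply asserts the equivalence of \cref{qap_Ysub} and \cref{qap_Ysub2} (``one can verify''), then performs exactly the Kronecker-product computation $\TT\TT^* = \bigl(\sum_j \mu_j \lambda_j\lambda_j^T\bigr)\otimes I_{n^2}$ and invokes the Krawtchouk orthogonality \eqref{orth_kraw} to obtain $\Diag(w)$. Your version is in fact a bit more explicit on the constraint-matching bookkeeping than the paper's proof.
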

\begin{proof}
	One can verify that~\cref{qap_Ysub} and~\cref{qap_Ysub2} are equivalent. Furthermore, it holds that
	$$\begin{array}{cl}
	{\TT} ({\TT} ^*(y)) &= \begin{bmatrix}
	\sqrt{\mu_{0}} \big( \lambda_{0}^{T} \otimes I_{n^{2}} \big) \\
	\vdots \\
	\sqrt{\mu_{d}} \big( \lambda_{d}^{T} \otimes I_{n^{2}} \big) \\
	\end{bmatrix}^{T}\begin{bmatrix}
	\sqrt{\mu_{0}} \big( \lambda_{0}^{T} \otimes I_{n^{2}} \big) \\
	\vdots \\
	\sqrt{\mu_{d}} \big( \lambda_{d}^{T} \otimes I_{n^{2}} \big) \\
	\end{bmatrix}y \\
	& = \bigg(\sum_{j=0}^{d} \mu_{j} \big(  \lambda_{j}^{T} \otimes I_{n^{2}} \big)^{T} \big( \lambda_{j}^{T} \otimes I_{n^{2}} \big) \bigg)y \\
	& = \big((\sum_{j=0}^{d} \mu_{j} \lambda_{j}\lambda_{j}^{T})  \otimes I_{n^{2}} \big)y.
	\end{array}$$
	
	Applying the orthogonality relation of the Krawtchouk polynomial
	\cref{orth_kraw}, the $(r,s)$-th entry of $\sum_{j=0}^{d} \mu_{j}
	\lambda_{j}\lambda_{j}^{T}$ is $\sum_{j=0}^{d} \mu_{j}p_{r,j}p_{s,j} =
	q^{d}{d \choose s}(q-1)^{s} \delta_{r,s} = q^{d}\mu_{s} \delta_{r,s}$ for
	$r,s = 0,\ldots,d$. Thus ${\TT} ({\TT} ^*(y)) = \Diag(w)y$ and
	\Cref{psim_ass} is satisfied.
\end{proof}

To efficiently  solve the ${Y}$-subproblem for the \QAP, we
use~\Cref{prj_sim}. Finally we describe how to obtain a valid lower
bound when the \ADMM model is solved approximately.
The important problem of getting valid lower bounds from
inaccurate solvers is recently discussed in~\cite{Eckstein:2020}.

\begin{lemma}\label{dual_bound}
Let $\mathcal{P}$ be the feasible set defined in~\cref{admm_p}, and
consider the problem in~\cref{admm_qap}.
For any $\tilde{Z} = (\tilde{Z}_{0},\ldots,\tilde{Z}_{d})$, the
objective value
\begin{equation}\label{dual_obj}
\begin{array}{rcl}
g(\tilde{Z}) &:=& \min\limits_{(Y_{0},\ldots,Y_{d}) \in \mathcal{P}}\sum_{j=0}^{d} \langle \tilde{C}_{j} + \tilde{Z}_{j},   \sqrt{\mu_{j}}\sum_{i=0}^{d} p_{i,j}Y_{i} \rangle-\sum_{j=0}^{d}{\mu_{j}}p_{0,j} \lambda_{\max}(\tilde{V}_{j}^{T}\tilde{Z}_{j}\tilde{V}_{j})
\\&\leq & p^*,
\end{array}
\end{equation}
i.e.,~it provides a lower bound to the optimal value $p^{*}$ of~\cref{admm_qap}.
\end{lemma}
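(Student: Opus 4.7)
The plan is to recognize $g(\tilde Z)$ as the value of the partial Lagrangian dual function obtained by dualizing only the coupling equality constraints
\[
\sqrt{\mu_j}\Bigl(\textstyle\sum_{i=0}^d p_{i,j}Y_i - \tilde V_j\tilde R_j\tilde V_j^T\Bigr)=0,\qquad j=0,\ldots,d,
\]
of~\cref{admm_qap}, while keeping $(Y_0,\ldots,Y_d)\in\mathcal{P}$ and $(\tilde R_0,\ldots,\tilde R_d)\in\mathcal{R}$ as explicit constraints. Since any primal-feasible pair annihilates the dualized constraints, the partial Lagrangian evaluated there equals the primal objective, and weak duality gives $p^*\ge \min_{Y\in\mathcal{P},\,\tilde R\in\mathcal{R}} L(Y,\tilde R,\tilde Z)$ for every $\tilde Z$.

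First I would rewrite the partial Lagrangian, using the adjoint identity $\langle \tilde Z_j,\tilde V_j\tilde R_j\tilde V_j^T\rangle = \langle \tilde V_j^T\tilde Z_j\tilde V_j,\tilde R_j\rangle$, as
\[
L(Y,\tilde R,\tilde Z)=\sum_{j=0}^d\Bigl\langle \tilde C_j+\tilde Z_j,\ \sqrt{\mu_j}\textstyle\sum_{i=0}^d p_{i,j}Y_i\Bigr\rangle - \sum_{j=0}^d\sqrt{\mu_j}\,\langle \tilde V_j^T\tilde Z_j\tilde V_j,\tilde R_j\rangle.
\]
The key observation is that this separates across the primal blocks: the first sum depends only on $Y$, and the second decouples into $d+1$ independent problems over the blocks $\tilde R_j$.

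The core computation is the evaluation of each $\tilde R_j$-subproblem
\[
\min\bigl\{-\sqrt{\mu_j}\,\langle \tilde V_j^T\tilde Z_j\tilde V_j,\tilde R_j\rangle \ :\ \tilde R_j\succeq 0,\ \trace(\tilde R_j)=\sqrt{\mu_j}\,p_{0,j}\bigr\}.
\]
Here I would invoke the classical identity $\max\{\langle A,X\rangle: X\succeq 0,\ \trace X=t\}=t\,\lambda_{\max}(A)$, valid for any symmetric $A$ and $t\ge 0$, attained at $X=tvv^T$ for a unit top eigenvector $v$ of $A$. Since $p_{0,j}=1$ for the Hamming scheme, the trace level $\sqrt{\mu_j}\,p_{0,j}$ is nonnegative, and the subproblem value is therefore $-\mu_j\, p_{0,j}\,\lambda_{\max}(\tilde V_j^T\tilde Z_j\tilde V_j)$, where the product of two $\sqrt{\mu_j}$ factors collapses to $\mu_j$. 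Summing over $j$ and combining with the $Y$-minimization over $\mathcal{P}$ produces exactly the expression for $g(\tilde Z)$ in~\cref{dual_obj}, and weak duality yields $g(\tilde Z)\le p^*$.

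There is no real obstacle: the argument is essentially weak Lagrangian duality together with the standard spectrahedron optimization formula. The only delicate points are the sign bookkeeping (the minus sign from dualizing produces a $\lambda_{\max}$ rather than $\lambda_{\min}$) and the $\sqrt{\mu_j}\cdot\sqrt{\mu_j}=\mu_j$ factor that combines the coupling scaling with the trace normalization defining $\mathcal{R}$ in~\cref{admm_R}.
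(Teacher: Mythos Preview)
Your proposal is correct and follows essentially the same route as the paper: form the partial Lagrangian by dualizing only the coupling constraints, separate the inner minimization into the $Y$-part over $\mathcal{P}$ and the $\tilde R$-part over $\mathcal{R}$, and evaluate each $\tilde R_j$-block via the Rayleigh/spectrahedron identity to obtain $-\mu_j p_{0,j}\lambda_{\max}(\tilde V_j^T\tilde Z_j\tilde V_j)$. The only difference is that the paper closes with the sentence ``Using strong duality, we have $g(\tilde Z)\le d^*=p^*$,'' whereas you (correctly) observe that weak duality alone already yields $g(\tilde Z)\le p^*$, which is all the lemma claims.
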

\begin{proof}
The dual of~\cref{admm_qap} with respect to the constraints $\sqrt{\mu_{j}}(\sum_{i=0}^{d}p_{i,j}Y_{i} -  \tilde{V}_{j}\tilde{R}_{j}\tilde{V}_{j}^{T}) = 0$ is
\begin{equation}\label{admm_d}
\begin{array}{ccl}
d^{*}:=\max\limits_{(\tilde{Z}_{0},\ldots,\tilde{Z}_{d})} &
\min\limits_{\stackrel{(Y_{0},\ldots,Y_{d}) \in \mathcal{P}}
{(\tilde{R}_{0},\ldots,\tilde{R}_{d}) \in \mathcal{R}}}
&  \sum_{j=0}^{d} \langle \tilde{C}_{j},   \sqrt{\mu_{j}}\sum_{i=0}^{d}
p_{i,j}Y_{i} \rangle + \langle \tilde{Z}_{j},
\sqrt{\mu_{j}}(\sum_{i=0}^{d}p_{i,j}Y_{i} -
\tilde{V}_{j}\tilde{R}_{j}\tilde{V}_{j}^{T}) \rangle.
\end{array}
\end{equation}
The inner minimization problem can be written as
\begin{equation}
\begin{array}{c}
\min\limits_{(Y_{0},\ldots,Y_{d}) \in \mathcal{P}}  \sum_{j=0}^{d}
\langle \tilde{C}_{j} + \tilde{Z}_{j},   \sqrt{\mu_{j}}\sum_{i=0}^{d}
p_{i,j}Y_{i} \rangle  +  \min\limits_{ (\tilde{R}_{0},\ldots,\tilde{R}_{d}) \in \mathcal{R}} \sum_{j=0}^{d}  \langle \tilde{Z}_{j},  \sqrt{\mu_{j}}( -  \tilde{V}_{j}\tilde{R}_{j}\tilde{V}_{j}^{T}) \rangle.
\end{array}
\end{equation}
It follows from the Rayleigh Principle, that the optimal value of the second minimization problem is $-\sum_{j=0}^{d}{\mu_{j}}p_{0,j} \lambda_{\max}(\tilde{V}_{j}^{T}\tilde{Z}_{j}\tilde{V}_{j})$. Using strong duality, we have $g(\tilde{Z}) \leq d^{*} = p^{*}$.
\end{proof}

\subsubsection{Numerical results for the \QAPp}
\label{sect:numericsQAP}

{
In this section we provide numerical results on solving the facially and
symmetry reduced  \DNN relaxation~\cref{qap_sdp_fs}.
We first present our general stopping conditions and tolerances
in~\Cref{def:tolerances}.

\begin{definition}[tolerances, stopping conditions]
\label{def:tolerances}
Given a \textdef{tolerance parameter, $\epsilon$}, we terminate the \ADMM when one of the following conditions is satisfied.
\index{$\epsilon$, tolerance parameter}
	\begin{itemize}
		\item The primal and dual residuals are smaller than $\epsilon$, i.e.,
		$$pres := \sum_{j=0}^{d}||\sum_{i=0}^{d}p_{i,j}Y_{i} -  \tilde{V}_{j}\tilde{R}_{j}\tilde{V}_{j}^{T}|| < \epsilon \text{ and } dres : = ||\tilde Z^{\text{old}} - \tilde Z^{\text{new}}||  \leq \epsilon.$$
		
\index{dres, dual residual}
\index{dual residual, dres}
\index{pres, primal residual}
\index{primal residual, pres}
		
		\item Let $p_{k}$ be the \ADMM objective value, and $d_{k} := g(\tilde{Z})$ the dual objective value
at some dual feasible point at the $k$-th iteration,
see~\cref{dual_obj}. If the duality gap is not improving significantly,
i.e.,
		$$
\textdef{gap $= \frac{p_{100k}-d_{100k}}{1+p_{100k}+d_{100k}}$} < 10^{-4},
$$
for $20$ consecutive integers $k$, then we conclude that there is
stagnation in the objective value. We measure the gap only every
$100$-th iteration {due to the expense of computing the dual objective value $d_{k}$.})
	\end{itemize}
\end{definition}

\index{\OBJp, objective \ADMM value}
\index{objective \ADMM value, \OBJp}
\index{\LBp, lower bound}
\index{lower bound, \LBp}

 In our \QAP experiments, we use $\epsilon = 10^{-12}$ if $n \leq 128$,
and $\epsilon = 10^{-5}$ when $n=256,512$. The objective value from
the \ADMM is denoted by \OBJ,
and the valid lower bound obtained from the dual feasible solution is
denoted by \LB, see~\Cref{dual_bound}.
The running times in all tables are reported in seconds. We also list
the maximum of the primal and dual residuals, i.e.,
\textdef{res~$:= \max\{pres,dres\}$}.
If a result is not available, we put \emph{-} in the corresponding entry.

\begin{enumerate}
\item
The first set of test instances are from  Mittelmann and Peng~\cite{mittelmann2010estimating},  where the authors compute \SDP bounds for the \QAP with $A$ being the Hamming distance matrix.
Choices of the matrix $B$\footnote{We thank Hans Mittelman for providing us generators for the mentioned instances.} differ for different types of instances.
In particular, in the Harper instance Harper\_$n$ where $n=2^d$ we have $B_{ij}=|i-j|$ for all $i,j=1,\ldots, 2^d$.
Further  eng1\_$n$  and end9\_$n$ with $n=2^d$, $d=4,\ldots, 9$ refer to the engineering problems, and  VQ\_$n$ instances have random matrices $B$.  For details see~\cite{mittelmann2010estimating}.
In rand\_256 and rand\_512 instances,  $A$ is the Hamming distance matrix of appropriate size and $B$ is a random matrix.

In the first column of \Cref{table:mittlem} we list the instance names where the sizes of the \QAP matrices  are indicated after the underscore.
Upper bounds are given in the column two.  For instances with up to 128 nodes we list the upper bounds computed in~\cite{mittelmann2010estimating},
and for the remaining instances we use our heuristics.
Since data matrices for the Harper instances are integer, we round up lower bounds to the closest integer.
In the column three (resp.~four) we list \SDP-based lower bounds (resp.~computation times in seconds) from~\cite{mittelmann2010estimating}.
The bounds from~\cite{mittelmann2010estimating} are obtained by solving an \SDP relaxation having several matrix variables on order $n$.
The bounds in~\cite{mittelmann2010estimating} were computed on a 2.67GHz Intel Core 2 computer with 4GB memory.
In the columns five to seven, we present the results obtained by using our \ADMM algorithm.

\Cref{table:mittlem}  shows that we significantly improve bounds for  all eng1\_$n$ and eng9\_$n$ instances.
 Moreover, we are able to compute bounds for huge \QAP instances with $n=256$ and $n=512$ in a reasonable amount of time.
Recall that for given $n$, the order of the matrix variable in the \DNN relaxation of the \QAP~\eqref{qap_sdp}  is $n^2$.
However,  for each instance $xx\_n$ of \Cref{table:mittlem}  we have that $n=2^d$, and that  the \DNN relaxation  \eqref{qap_sdp} boils down to $d+1$ positive semidefinite  blocks of order $n$.
In particular, we obtain the bound for each instance in \Cref{table:mittlem} by solving the facially and symmetry reduced  \DNN relaxation~\cref{qap_sdp_fs} where  $\BBts_{k}(y) \in \mathcal{S}^{n}_+$, $k = 1,\ldots,d+1$.

\begin{table}[H]
	\small
	\centering
	\begin{tabular}{|cc|cc|cccc|} \hline
		\multicolumn{2}{|c|}{} &
		\multicolumn{2}{c|}{MP~\cite{mittelmann2010estimating}}   &
		\multicolumn{4}{c|}{\ADMM}  \\  \hline
		problem  & UB & \LB & time & \OBJ & \LB & time  & res. \\  \hline
		Harper\_16 & 2752 & 2742 & 1 		   	& 2743 & 2742 & 1.92  & 4.50e-05   \\
		Harper\_32 & 27360 & 27328 & 3  		& 27331 & 27327 & 9.70  & 1.67e-04   \\
		Harper\_64 & 262260 & 262160 & 56		& 262196 & 261168 & 36.12  & 1.12e-05   \\
		Harper\_128 & 2479944 & 2446944 & 1491  & 2446800 & 2437880 & 186.12  & 3.86e-05   \\
		Harper\_256 & 22370940 & - & -    		& 22369996 & 22205236 & 432.10  & 9.58e-06   \\
		Harper\_512 & 201329908 & - & -    		& 201327683 & 200198783 & 1903.66  & 9.49e-06   \\ \hline
		eng1\_16 & 1.58049 & 1.5452 &  1 		& 1.5741& 1.5740 & 2.28  & 3.87e-05  \\
		eng1\_32 & 1.58528 & 1.24196 &  4		& 1.5669& 1.5637 & 14.63  & 5.32e-06  \\
		eng1\_64 & 1.58297 & 0.926658 & 56		& 1.5444& 1.5401 & 38.35  & 4.69e-06  \\
		eng1\_128 & 1.56962 & 0.881738 & 1688	& 1.4983& 1.4870 & 389.04  & 2.37e-06  \\
		eng1\_256 & 1.57995 & - & -    			& 1.4820& 1.3222 & 971.48  & 9.95e-06  \\
		eng1\_512 & 1.53431 & - & -    			& 1.4553& 1.3343 & 9220.13  & 9.66e-06  \\	\hline
		eng9\_16 & 1.02017 & 0.930857 &  1 	    & 1.0014& 1.0013 & 3.58  & 2.11e-06  \\
		eng9\_32 & 1.40941 & 1.03724 &  3 	    & 1.3507& 1.3490 & 12.67  & 3.80e-05  \\
		eng9\_64 & 1.43201 & 0.887776  & 68 	& 1.3534& 1.3489 & 74.89  & 6.60e-05  \\
		eng9\_128 & 1.43198 & 0.846574 & 2084 	& 1.3331& 1.3254 & 700.27  & 8.46e-06  \\
		eng9\_256 & 1.45132 & - & -    			& 1.3152& 1.2610 & 1752.72  & 9.74e-06  \\
		eng9\_512 & 1.45914 & - & -    			& 1.3074& 1.1168 & 23191.96  & 9.96e-06  \\	\hline
		VQ\_32 &  297.29 & 294.49   & 3			& 296.3241& 296.1351 & 11.82  & 1.27e-05  \\
		VQ\_64 & 353.5 & 352.4 &   45 			& 352.7621& 351.4358 & 43.17  & 4.22e-04  \\
		VQ\_128 & 399.09 & 393.29  &   2719 	& 398.4269& 396.2794 & 282.28  & 6.19e-04  \\	\hline
		rand\_256 & 126630.6273 & -  &	-		& 124589.4215 & 124469.2129 & 2054.61 & 3.78e-05 \\
		rand\_512 & 577604.8759 & - &	-		& 570935.1468 & 569915.3034 & 9694.71 & 1.32e-04 \\ \hline
	\end{tabular}
	\caption{Lower and upper bounds for different \QAP instances.}
	\label{table:mittlem}
\end{table}

\index{esc, Eschermann, Wunderlich}

\item
The second set of test instances are
\textdef{Eschermann, Wunderlich, esc}, instances from the QAPLIB library~\cite{MR1457185}.
In  esc\_$n$x instance, the  distance matrix $A$    is the Hamming distance matrix of order $n=2^d$,
whose automorphism group is the automorphism group of the Hamming graph $H(d,2)$.
In~\cite{MR2546331} the authors exploit symmetry in esc instances  to
solve the  \DNN relaxation~\cref{qap_sdp_s} by the  interior point method.
That was the \emph{first time} that \SDP bounds for large QAP instances
were computed by exploiting symmetry.
In particular, the authors from~\cite{MR2546331} needed  $13$ seconds to
compute the \SDP bound for esc64a,  and  $140$ seconds for computing the
esc128 \SDP bound,  see also~\Cref{table:Esc}.
The bounds in~\cite{MR2546331} are computed by the interior point solver SeDuMi~\cite{sturm2001using} using the Yalmip interface~\cite{lofberg2004yalmip} and Matlab 6.5,
implemented on a PC with Pentium IV 3.4	GHz dual-core processor and 3GB of memory.  Computational times in~\cite{MR2546331} include only solver time, not the time needed for  Yalmip to construct the problem.

In~\cite{OliveiraWolkXu:15} the authors approximately solve  the  \DNN
relaxation~\cref{qap_sdp_s} using the \ADMM  algorithm, but do note exploit symmetry.
Here, we compare computational results from~\cite{OliveiraWolkXu:15} with the approach we present in this paper.
All the instances from~\cite{OliveiraWolkXu:15} were tested on an Intel
Xeon Gold 6130 $2.10$ Ghz PC with $32$ cores and $64$ GB of
 memory and running on $64$-bit Ubuntu system.

An efficient solver, called {\bf SDPNAL$+$}, for solving large scale \SDPs is presented in~\cite{MR3384939,doi:10.1080/10556788.2019.1576176}.
 {\bf SDPNAL$+$}  implements an augmented Lagrangian based method.
 In particular, the implementation in~\cite{MR3384939} is  based on a majorized semismooth Newton-CG augmented Lagrangian method,
and  the implementation in~\cite{doi:10.1080/10556788.2019.1576176}  is based on an inexact symmetric Gauss-Seidel based semi-proximal \ADMM.
In~\cite{MR3384939,doi:10.1080/10556788.2019.1576176}, the authors
present extensive numerical results that also include solving~\cref{qap_sdp} on various instances from
the QAPLIB library~\cite{MR1457185}. However, they do not perform  \FR and \SRp.
In~\Cref{table:Esc} we include results from~\cite{doi:10.1080/10556788.2019.1576176} for solving  esc\_$n$x, with $n=16,32$.
There are no results for $n=64, 125$ presented in their paper. Moreover the authors emphasize that {\bf SDPNAL$+$}
 is for solving \SDPs where the maximum matrix dimension is
assumed to be less than $5000$. Due to the use of different computers,
the times in~\Cref{table:Esc} are not comparable. For example,
the authors from~\cite{doi:10.1080/10556788.2019.1576176}  use  an
Intel Xeon CPU E5-2680v3, $2.50$ GHz with $12$ cores and $128$ GB of memory.

In~\Cref{table:Esc} we present the numerical result for the esc instances.
In particular, we compare bounds and computational times  of the relaxation
\cref{qap_sdp} (no reductions, solved in~\cite{doi:10.1080/10556788.2019.1576176}),
the facially reduced relaxation~\cref{qap_sdp_facial} (solved in~\cite{OliveiraWolkXu:15}),
the symmetry reduced relaxation~\cref{qap_sdp_s} (solved in~\cite{MR2546331}),
and facially and symmetry reduced relaxation~\cref{qap_sdp_fs} (solved by our \ADMM).
\end{enumerate}

We conclude that:
\begin{enumerate}
	\item There are notably large differences in computation times between the \ADMM algorithm presented here and the one from~\cite{OliveiraWolkXu:15},
since the latter does not exploit symmetry.
\item \label{forRefPg2}
Even if the use of different computers is taken into account,
this would likely not be enough to account for the time differences observed
between our \ADMM  and {\bf SDPNAL$+$}~\cite{doi:10.1080/10556788.2019.1576176}.
Moreover,  {\bf SDPNAL$+$} was not able to solve several instances.

	\item In~\cite{MR2546331}, the authors use SeDuMi to solve a
relaxation equivalent to the symmetry reduced program~\cref{qap_sdp_s};
and they obtain a \LB of $53.0844$ for  esc128.
However, the bounds for this instance for the facially
and symmetry reduced program~\cref{qap_sdp_fs} computed by  the Mosek interior point method solver
 is $51.7516$; and our \ADMM algorithm reports $51.7518$.
 This illustrates our improved numerical accuracy using \FR and \SRp, and validates the statements about singularity degree,
see~\Cref{sect:singDeg}. We note in addition that we provide a theoretically guaranteed lower bound, as well as solve huge instances
that are intractable for the approach in~\cite{MR2546331}.

\end{enumerate}

\begin{table}[H]
\footnotesize
	\centering
	\begin{tabular}{|cc|cc|cc|cc|cccc|} \hline
		\multicolumn{2}{|c|}{} &  \multicolumn{2}{c|}{{\bf SDPNAL$+$}
		STYZ~\cite{doi:10.1080/10556788.2019.1576176}}  &
		\multicolumn{2}{c|}{\ADMM OWX~\cite{OliveiraWolkXu:15}}
		&  \multicolumn{2}{c|}{\SDP  KS~\cite{MR2546331}}  & \multicolumn{4}{c|}{\ADMM}  \\  \hline
		inst.  & opt & \LB & time  & \LB & time & \LB & time &
\OBJ & \LB & time & res  \\  \hline
		esc16a & 68 	& 63.2750 	& 16	& 64 	 & 20.14 	& 63.2756   & 0.75 		& 63.2856 	& 63.2856 	& 2.48 	& 1.17e-11  \\
		esc16b & 292 	& 289.9730 	& 24	& 290 	 & 3.10   	& 289.8817  & 1.04 		& 290.0000 	& 290.0000 	& 0.78 	& 9.95e-13  \\
		esc16c & 160 	& 153.9619 	& 65	& 154	 & 8.44   	& 153.8242  & 1.78 		& 154.0000 	& 153.9999 	& 2.11 	& 2.56e-09   \\
		esc16d & 16		& 13.0000  	& 2		& 13 	 & 17.39  	& 13.0000   & 0.89 		& 13.0000 	& 13.0000 	& 1.04 	& 9.94e-13  \\
		esc16e & 28		& 26.3367 	& 2		& 27	 & 24.04  	& 26.3368	& 0.51 		& 26.3368 	& 26.3368 	& 1.21 	& 9.89e-13  \\
		esc16f & 0 		& - 		& -  	& 0 	 & 3.22e+02 & 0			& 0.14 		& 0 			& 0 	& 0.01 	& 2.53e-14  \\
		esc16g & 26 	& 24.7388 	& 4 	& 25 	 & 33.54  	& 24.7403 	& 0.51 		& 24.7403 	& 24.7403 	& 1.40 	& 9.95e-13  \\
		esc16h & 996 	& 976.1857 	& 10 	& 977 	 & 4.01    	& 976.2244 	& 0.79 		& 976.2293 	& 976.2293 	& 2.51 	& 7.73e-13  \\
		esc16i & 14		& 11.3749 	& 6		& 12 	 & 100.79  	& 11.3749 	& 0.73 		& 11.3749 	& 11.3660 	& 6.15 	& 2.53e-06  \\
		esc16j & 8 		& 7.7938 	& 4 	& 8 	 & 56.90   	& 7.7942 	& 0.42 		& 7.7942 	& 7.7942 	& 0.21 	& 9.73e-13  \\
		esc32a & 130	& 103.3206 	& 333 	& 104    & 2.89e+03 & 103.3194 	& 114.88 	& 103.3211 	& 103.0465 	& 12.36 & 3.62e-06  \\
		esc32b & 168 	& 131.8532 	& 464 	& 132    & 2.52e+03 & 131.8718 	& 5.58 		& 131.8843 	& 131.8843 	& 4.64 	& 9.59e-13  \\
		esc32c & 642	& 615.1600 	& 331 	& 616    & 4.48e+02 & 615.1400 	& 3.70 		& 615.1813 	& 615.1813 	& 8.04 	& 2.05e-10  \\
		esc32d & 200	& 190.2273 	& 67 	& 191    & 8.68e+02 & 190.2266 	& 2.09 		& 190.2271 	& 190.2263 	& 5.86 	& 7.45e-08  \\
		esc32e & 2 		& 1.9001 	& 149 	& 2 	 & 1.81e+03 & - 		& -  		& 1.9000 	& 1.9000 	& 0.70 	& 4.49e-13  \\
		esc32f & 2		& - 		& - 	& 2 	 & 1.80e+03 & - 		& - 		& 1.9000 	& 1.9000 	& 0.76 	& 4.49e-13  \\
		esc32g & 6 		& 5.8336 	& 65 	& 6 	 & 6.04e+02 & 5.8330	& 1.80		& 5.8333 	& 5.8333 	& 3.50 	& 9.97e-13  \\
		esc32h & 438 	& 424.3256 	& 1076 	& 425 	 & 3.02e+03 & 424.3382 	& 7.16 		& 424.4027 	& 424.3184 	& 5.89 	& 1.03e-06  \\
		esc64a & 116 	& - 		& - 	& 98	 & 1.64e+04 & 97.7499 	& 12.99 	& 97.7500 	& 97.7500 	& 5.33 	& 8.95e-13  \\
		esc128 & 64  	& - 		& - 	& - 	 & -  		& 53.0844 	& 140.36 	& 51.7518 	& 51.7518 	& 137.71& 1.18e-12  \\ \hline
	\end{tabular}
	\caption{Esc instances (times with different computers).}
	\label{table:Esc}
\end{table}

\subsection{The graph partition problem (\GPp)}
\label{sec_mc}

The graph partition  problem  is the problem of partitioning the vertex set of a graph into a fixed number of sets of given sizes  such that the sum of edges joining different sets is  minimized.
The problem is known to be NP-hard.  The \GP has many applications such as VLSI design, parallel computing,
network partitioning, and floor planing.  Graph partitioning also plays
a role in machine learning (see e.g.,~\cite{li2015graph}) and data
analysis (see e.g.,~\cite{pirim2012clustering}). There exist several
\SDP relaxations for the \GP of different complexity and strength,  see
e.g.,
\cite{WoZh:96,sotirov2013efficient,MR3788893,van2015semidefinite,KaRe:95dec}.

\subsubsection{The general \GP}
\label{sect:generalGP}
Let $G=(V,E)$ be an undirected graph  with vertex set $V$, $|V|=n$ and edge set $E$, and $k\geq 2$ be a given integer.
We denote by $A$ the adjacency matrix of $G$.
The goal is to find a partition  of the vertex set into $k$ (disjoint) subsets $S_1,\ldots, S_k$ of
specified sizes $m_1\geq \ldots \geq m_k $, where $\sum_{j=1}^k m_j =n$, such that the sum of weights of edges joining different sets
$S_j$ is minimized.
Let
\begin{equation}\label{Pm}
P_m := \left \{ S=(S_1,\ldots, S_k)\,|\, S_i\subset V, |S_i|=m_i, \forall i, ~~S_i\cap S_j = \emptyset, i \neq j, ~\cup_{i=1}^k S_i =V \right \}
\end{equation}
denote the set of all partitions of $V$ for a given $m=(m_1,\ldots, m_k)$.
In order to model the \GP in binary variables we represent  the partition $S\in P_m$
by the partition matrix $X\in \R^{n\times k}$ where the column $j$ is the incidence vector for the set $S_j$.

The  \GP can be stated as follows
\[
 \min_{X\in {\mathcal M}_m}  \frac{1}{2} \trace (AX(J_k-I_k)X^T),
\]
where
\begin{equation} \label{Mm}
{\mathcal M}_m= \{ X\in \{0,1\}^{n\times k}\,|\, Xe_k= e_n, ~X^Te_n=m  \}
\end{equation}
is the set of partition matrices.

Here, we consider the following \DNN relaxation that is equivalent to the relaxation from~\cite{WoZh:96}:
\begin{equation}\label{mc_sdp}
\begin{array}{cl}
\min & \frac{1}{2}  \trace ( (A\otimes B) Y )\\
\text{s.t.} & \GG(Y) = 0 \\
&  \trace  (D_{1} Y) - 2(e_{n} \otimes e_{k})^{T}\diag(Y) + n = 0 \\
&  \trace  (D_{2} Y) - 2(e_{n} \otimes m)^{T}\diag(Y) + m^{T}m = 0 \\
& \mathcal{D}_{O}(Y) = \Diag(m) \\
& \mathcal{D}_{e}(Y) = e \\
& \langle J ,Y \rangle = n^2 \\
& Y\geq 0, Y \succeq 0,
\end{array}
\end{equation}
where $B=J_k-I_k$, and
$$Y = \begin{bmatrix}
{Y}^{(11)} &  \ldots & {Y}^{(1n)} \\
\vdots & \ddots & \vdots \\
{Y}^{(n1)} &  \ldots & {Y}^{(nn)} \\
\end{bmatrix}\in \mathcal{S}^{kn}$$ with each ${Y}^{(ij)}$ being a $k \times k$ matrix, and
$$\begin{array}{rl}
D_{1} &= I_{n} \otimes J_k \\
D_{2} &= J_n \otimes I_{k} \\
\mathcal{D}_{O}(Y) &= \sum_{i=1}^{n}Y^{ii} \in \mathcal{S}^{k} \\
\mathcal{D}_{e}(Y) & = (\trace Y^{ii}) \in  \R^{n} \\
\GG(Y) &= \langle I_{n} \otimes (J_k - I_{k}) , Y \rangle.
\end{array}$$
Here $\GG(\cdot)$ is the gangster operator for the \GP.
To compute \DNN bounds for the \GP, we apply \FR for symmetry reduced
relaxation~\cref{mc_sdp}.
 The details are similar  to the \QAP, and thus omitted.

\medskip
We present numerical results for different graphs  from the literature.
Matrix can161 is from the library Matrix Market~\cite{boisvert1997matrix}, matrix grid3dt5 is $3D$ cubical mesh, and gridt$xx$ matrices are $2D$ triangular meshes.
Myciel7 is a graph based on the Mycielski transformation and 1\_FullIns\_4 graph is a generalization of the Mycielski graph.
Both graphs are used in the COLOR02 symposium~\cite{color02}.

\begin{enumerate}
\item
In~\Cref{table:graphInfo}
\begin{table}[H]
	\small
	\centering
	\begin{tabular}{|c|c|c|c|c|} \hline                        
		instance & $|V|$ & \# orbits & blocks of $A$  &  $m$ \\  \hline
		1\_FullIns\_4 & 93 & 3629 & (53,27,9,3,1)& (30,31,32) \\
		can161 & 161 & 921 & (20,20,20,20,20,20,20,11,10)& (52,53,56) \\
		grid3dt5 & 125 & 4069 & (39,36,26,24)& (40,41,44) \\
		gridt15 & 120 & 2432 & (80,24,16)& (39,40,41) \\
		gridt17 & 153 & 3942 & (102,30,21)& (50,51,52) \\
		myciel7 & 191 & 6017 & (64,64,63)& (62,63,66) \\ \hline
	\end{tabular}
	\caption{Graphs and partitions.}
	\label{table:graphInfo}
\end{table}
we provide information on the graphs and the considered 3-partition problems. In particular,  the first column  specifies graphs,  the second column provides the number of vertices in a graph, the third column is the number of orbits after symmetrization,  the fourth column lists the number of blocks in $Q^T AQ$. Here, the orthogonal matrix $Q$ is computed by using the heuristic from~\cite{MR2546331}. The last column specifies sizes of partitions.

\item
In \Cref{table:GPPRezthree}
\begin{table}[H]
	\small
	\centering
	\begin{tabular}{|c|ccc|cccc|} \hline
		&  \multicolumn{3}{c|}{\begin{tabular}{@{}c@{}}\textbf{IPM} (Symmetry \\ \&Facially reduced)\end{tabular}} &  \multicolumn{4}{c|}{\ADMM $(\epsilon = 10^{-3})$}   \\  \hline
		instance &  \LB & time & iter. & \OBJ & \LB & time & res \\  \hline
		1\_FullIns\_4 	& 194.2825 & 311.95 & 26 & 194.2686 & 194.0523 & 141.29 & 1.50e-01 \\
		can161 	& 33.0151 & 124.32 & 19 & 33.0392 & 30.4470 & 19.74 & 2.58e-01 \\
		grid3dt5 	& 68.3175 & 245.65 & 17 & 68.3029 & 68.0436 & 200.35 & 2.02e-01 \\
		gridt15 	& 12.1153 & 1302.10 & 41 & 12.1116 & 11.3654 & 97.17 & 1.91e-01 \\
		gridt17* 	& 12.2482 & 1865.67 & 21 & 12.2532 & 11.1459 & 357.53 & 1.80e-01 \\
		myciel7* 	&  1126.0309 & 2579.65 & 17 & 1126.0385 & 1123.8526 & 553.67 & 9.50e-02 \\\hline
	\end{tabular}
	\caption{Numerical results for the graph $3$-partition.}
	\label{table:GPPRezthree}
\end{table}
we list lower bounds obtained by using Mosek and our \ADMM algorithm.
The table also presents computational times required to compute bounds
by both methods as well as the number of interior point method iterations.
The results show that the \ADMM with  precision $\epsilon = 10^{-3}$ provides competitive bounds in much shorter time than  the interior point method solver.
In~\Cref{table:GPPRezthree}, some instances are marked by $*$.
This means that our 64GB machine did not have enough memory to solve these instances by the interior point method solver,
and therefore they are solved on a machine with an Intel(R) Xeon(R) Gold 6126, 2.6 GHz quad-core processor and 192GB of memory.
However, the \ADMM algorithm has much lower memory requirements, and thus the \ADMM is able to solve all instances from~\Cref{table:GPPRezthree}
on the smaller machine.
\end{enumerate}

\subsubsection{The vertex separator problem ({\bf VSP}) and min-cut  ({\bf MC}) }
\label{sect:vertexsep}

The min-cut problem is the problem of partitioning the vertex set of a graph into $k$ subsets of given sizes
such that the number of edges joining the first $k-1$ partition sets is minimized.
The {\bf MC} problem is a special case of the general \GP but also arises as a subproblem of the vertex separator problem.
The vertex separator problem   is to find a subset of vertices (called vertex separator) whose removal disconnects the graph into $k-1$ components.
This problem is  NP-hard.

The vertex separator problem was studied by Helmberg, Mohar, Poljak and
Rendl~\cite{helmberg1995spectral}, Povh and Rendl~\cite{PoRe:05},
Rendl and Sotirov~\cite{rendl2018min}, Pong, Sun, Wang, Wolkowicz~\cite{HaoWangPongWolk:14}.
The {\bf VSP} appears in many different  fields such as   VLSI design~\cite{bhatt1984framework} and  bioinformatics~\cite{fu2005multi}.
Finding vertex separators of minimal size is an important problem in  communication networks~\cite{leighton1983complexity} and finite element methods~\cite{miller1998geometric}.
The VSP also appears in  divide-and-conquer algorithms   for minimizing the work involved in solving systems of equations, see e.g.,~\cite{lipton1980applications,lipton1979generalized}.

Let us formally relate the {\bf VSP} and the {\bf MC} problem.
Let $\delta(S_i,S_j)$ denote the set of edges between $S_i$ and $S_j$,
where $S_i$ and $S_j$ are defined as in~\cref{Pm}.
We denote the set of edges with endpoints in distinct partition sets $S_1$,\ldots,$S_{k-1}$ by
\[
\delta(S) = \cup_{i<j<k} \delta(S_i,S_j).
\]
The min-cut   problem is
\[
{\rm cut}(m) = \min \{ |\delta(S)| \,|\, S\in P_m \}.
\]
The graph has a vertex separator if there exists $S\in P_m$ such that after the removal of $S_k$ the induced
subgraph has no edges across $S_i$ and $S_j$ for $1\leq1<j<k$.
Thus, if ${\rm cut}(m)=0$ or equivalently $\delta(S)=\emptyset$, there exists a vertex separator.
On the other hand ${\rm cut}(m)> 0$  shows that no separator $S_k$ for the cardinalities specified in $m$ exists.

Clearly, $|\delta(S)|$ can be represented in terms of a quadratic function of the partition matrix $X$, i.e.,~as
$\frac{1}{2} \trace (AXBX^T)$ where
\begin{equation} \label{Bb}
B := \begin{bmatrix}
J_{k-1}-I_{k-1} & 0 \\
0 & 0
\end{bmatrix} \in \mathcal{S}^{k}.
\end{equation}
Therefore,
\[
{\rm cut}(m) = \min_{X\in {\mathcal M}_m}  \frac{1}{2} \trace (AXBX^T),
\]
where ${\mathcal M}_m$ is given in~\cref{Mm}.
To compute \DNN bounds for the {\bf MC} problem and provide bounds for the vertex separator problem,
we use the  \DNN relaxation~\cref{mc_sdp} with $B$ defined in~\cref{Bb}.

\begin{enumerate}
\item
We present numerical results for the Queen graphs, where the $n\times n$ Queen graph has the squares of an $n\times n$  chessboard for its vertices and two such vertices are adjacent if the corresponding squares are in the same row, column, or diagonal. The instances in this class come from the DIMACS challenge on graph coloring.  In~\Cref{table:QueenInfo} we provide information on the Queen graphs. The table is arranged in the same way as~\Cref{table:graphInfo}.

\begin{table}[H]
	\small
	\centering
	\begin{tabular}{|c|c|c|c|c|} \hline                        
	instance & $|V|$ &\# orbits &  blocks of $A$  & $m$   \\  \hline
	queen5\_5 & 25  & 91 & (12,6,3,3,1) & (4,5,16) \\
	queen6\_6 & 36  & 171 & (18,6,6,3,3) & (6,7,23) \\
	queen7\_7 & 49  & 325 & (24,10,6,6,3) & (9,9,31) \\
	queen8\_8 & 64  & 528 & (32,10,10,6,6) & (11,12,41) \\
	queen9\_9 & 81  & 861 & (40,15,10,10,6) & (14,15,52) \\
	queen10\_10 & 100  & 1275 & (50,15,15,10,10) & (18,18,64) \\
	queen11\_11 & 121  & 1891 & (60,21,15,15,10) & (21,22,78) \\
	queen12\_12 & 144  & 2628 & (72,21,21,15,15) & (25,26,93) \\
	queen13\_13 & 169  & 3655 & (84,28,21,21,15) & (30,30,109) \\\hline
	\end{tabular}
	\caption{The Queen graphs and partitions.}
	\label{table:QueenInfo}
\end{table}

\item
In~\Cref{table:VC} we provide the numerical results for the vertex separator problem.
 More specifically, we are computing the largest integer $m_3$ such that the solution value of the \DNN relaxation~\cref{mc_sdp}  is positive with partition
	\begin{equation}
	\label{mdef}
	m = (\lfloor \frac{n-m_3}{2} \rfloor, \lceil \frac{n-m_3}{2} \rceil, m_3).
	\end{equation}
Then $m_3+1$ is a lower bound for the vertex separator problem with respect to the choice of $m$.
One  may tend to solve~\cref{mc_sdp} for all possible $m_3$ between $0,1,\ldots,|V|-1$ to find the largest  $m_3$ for which the \DNN bound is positive.
However,  the optimal value of~\cref{mc_sdp} is monotone in $m_3$, and thus we find the appropriate $m_3$ using binary search starting with $m_3 = \lceil \frac{n}{2} \rceil$.
 We present the lower bound on the vertex separator, i.e.,
$m_3+1$ in the third column  of~\Cref{table:VC}. The total number of problems solved is listed in the fourth column of the same table.
The running time given in the last two columns is the total amount of
time used  to find a positive lower bound for~\cref{mc_sdp}  for some
$m_3$ by using Mosek  and  our \ADMM algorithm, respectively.
This task is particularly suitable for the \ADMM, as we can terminate
the \ADMM once the lower bound in an iterate is positive.
For example, it takes $786$ seconds to solve the min-cut relaxation on queen12\_12 by Mosek, see~\Cref{table:QueenRez}.
However, though not shown in the table, it takes \ADMM only $120$  seconds to conclude that the optimal value is positive.

\begin{table}[H]
	\small
	\centering
	\begin{tabular}{|c|c|c|c|c|c|c|} \hline                        
		instance & $|V|$ &  $m_3+1$ & $\#$problems &  \begin{tabular}{@{}c@{}}\textbf{IPM} (Symmetry\&Facially reduced) \\ time \end{tabular} & \begin{tabular}{@{}c@{}}\ADMM $(\epsilon = 10^{-12})$ \\ time \end{tabular}  \\  \hline
queen 5\_5 & 25 & 17 & 4 & 7.49 & 2.69 \\
queen 6\_6 & 36 & 24 & 5 & 9.62 & 2.91 \\
queen 7\_7 & 49 & 32 & 5 & 25.34 & 5.95 \\
queen 8\_8 & 64 & 42 & 6 & 85.72 & 34.35 \\
queen 9\_9 & 81 & 53 & 6 & 304.44 & 64.10 \\
queen 10\_10 & 100 & 65 & 7 & 1309.85 & 131.66 \\
queen 11\_11 & 121 & 79 & 7 & 3416.01 & 387.38 \\
queen 12\_12 & 144 & 94 & 7 & 6147.20 & 671.02 \\
queen 13\_13 & 169  & 110 & 8 & - & 1352.17 \\ \hline
	\end{tabular}
	\caption{The vertex separator problem on the Queen graphs.}
	\label{table:VC}
\end{table}

\item
In~\Cref{table:QueenRez} we compare bounds and computational times
required to solve, for fixed $m$, symmetry reduced \DNN relaxation~\cref{mc_sdp}
 by the interior point algorithm,
 as well as   symmetry and facially reduced relaxation~\cref{mc_sdp}  by using Mosek and our \ADMM algorithm.

\begin{table}[H]
	\small
	\centering
	\begin{tabular}{|c|ccc|ccc|cccc|} \hline
		&  \multicolumn{3}{c|}{  \begin{tabular}{@{}c@{}}\textbf{IPM} \\ (Symmetry reduced)\end{tabular}} & \multicolumn{3}{c|}{ \begin{tabular}{@{}c@{}}\textbf{IPM} (Symmetry \\ \&Facially reduced)\end{tabular} } & \multicolumn{4}{c|}{\ADMM $(\epsilon = 10^{-12})$}  \\  \hline
		instance & \LB & time & iter. & \LB & time & iter. &
\OBJ & \LB & time & res  \\  \hline
queen5\_5 & 0.0908 & 1.04 & 38 & 0.1658 & 0.27 & 10 & 0.1658 & 0.1658 & 6.88 & 7.36e-11    \\
queen6\_6 & 0.0962 & 3.43 & 31 & 0.1411 & 0.91 & 11 & 0.1411 & 0.1411 & 11.37 & 1.83e-10    \\
queen7\_7 & 0.5424 & 15.42 & 32 & 0.6196 & 1.92 & 10 & 0.6196 & 0.6196 & 17.97 & 5.53e-11    \\
queen8\_8 & 0.1967 & 127.60 & 39 & 0.3087 & 7.38 & 13 & 0.3087 & 0.3087 & 61.50 & 1.15e-10    \\
queen9\_9 & 0.0698 & 377.77 & 32 & 0.2175 & 19.98 & 12 & 0.2175 & 0.2175 & 204.39 & 1.16e-06    \\
queen10\_10 & 0.8159 & 1664.09 & 42 & 1.0211 & 85.42 & 14 & 1.0211 & 1.0211 & 239.75 & 1.09e-09    \\
queen11\_11 &   -   &    -    & -	 & 0.2131 & 275.20 & 16 & 0.2131 & 0.2131 & 877.85 & 1.82e-05    \\
queen12\_12 &   -   &    -    & -	 & 0.3248 & 786.12 & 25 & 0.3248 & 0.3248 & 1474.45 & 1.20e-06    \\
queen13\_13 &   -   &   -     & -	 &   -   &  -   & - & 0.9261 & 0.9261 & 1864.30 & 5.71e-09    \\ \hline
	\end{tabular}
	\caption{The min-cut problem on the Queen graphs.}
	\label{table:QueenRez}
\end{table}

\end{enumerate}

We conclude from~\Cref{table:VC,table:QueenRez} that
\begin{itemize}
	\item For small instances, the interior point algorithm  is faster than the \ADMM as shown in~\Cref{table:QueenRez}.
For larger instances, the interior point  algorithm has  memory issues.
 However, the \ADMM algorithm can still handle large instances due to its low memory demand.
	\item To obtain bounds on the vertex separator of a graph, one does not need to solve the \DNN relaxation to high-precision.
The \ADMM is able to exploit this fact, and find a lower bound on the size of the vertex separator in significantly less amount of time than the interior
point algorithm, see~\Cref{table:VC}.
	\item The symmetry reduced program without \FR
is heavily ill-conditioned, and the interior point method is not able to solve it correctly for any of the instances. The running time is also significantly longer than the symmetry and facially reduced program, see~\Cref{table:QueenRez}.

Note that we have solved the queen10\_10 problem with high accuracy
with \FR. The distance between the optimal solutions in
norm was very large with no decimals of accuracy.
This emphasizes the importance of \FR in obtaining
accuracy in solutions, see e.g.,~\cite{SWW:17}.
\end{itemize}

\section{Conclusion}
\label{sect:concl}

In this paper we propose a method to efficiently implement facial
reduction to the symmetry reduced \SDP relaxation,  and we
demonstrated the efficiency by solving large scale NP-hard problems.
More specifically, if an exposing vector of the
minimal face for the input \SDP is given, then we are able to construct
an exposing vector of the minimal face for the symmetry reduced \SDPp.
The resulting relaxation
is symmetry reduced, satisfies the Slater condition, and
thus can be solved with improved numerical stability.

We then extend our reduction technique to doubly nonnegative, \DNNp,
programs. In fact, our approach allows  the matrix variable of
the original \SDPp, to be passed to simple nonnegative vector for the
\DNNp. Again we exploit exposing vectors of \DNN as a decomposition into
a sum of a semidefinite and nonnegative exposing vectors.
Further, we discuss the importance of the order of the reductions in our theory.
We also show that the
singularity degree of the symmetry reduced program is equal to the  singularity degree of the original program.

We apply our technique to many combinatorial problems
and their \DNN relaxations, i.e.,~we facially and symmetry reduce
them.  The obtained relaxations can be solved extremely efficiently
using the alternating direction method of multipliers.
We also show that interior point methods  are more efficient on
a symmetry and facially reduced relaxation. As a result, we are able to compute improved lower bounds for some \QAP
instances in significantly less time.

	\printindex
	\label{ind:index}
	

\bibliography{../../../mytexmf/.master,../../../mytexmf/.edm,../../../mytexmf/.psd,../../../mytexmf/.bjorBOOK,../../../mytexmf/.qap,../../../mytexmf/.haoh}
	
\addcontentsline{toc}{section}{Bibliography}

\newpage

\end{document}